\theoremstyle{theorem}
\newtheorem{thm}{Theorem}[section]
\newtheorem{cor}[thm]{Corollary}
\newtheorem{lem}[thm]{Lemma}
\newtheorem{prop}[thm]{Proposition}
\theoremstyle{definition}
\newtheorem{rem}[thm]{Remark}
\numberwithin{equation}{section}
\newcommand{\1}{\mathds{1}}
\newcommand{\bigabs}[1]{\big|#1\big|}
\newcommand{\biggabs}[1]{\bigg|#1\bigg|}
\newcommand{\abs}[1]{|#1|}
\newcommand{\T}[1]{{#1}^{\mathrm{T}}}
\newcommand{\scalar}[1]{\langle{#1}\rangle}
\newcommand{\bigscalar}[1]{\big\langle{#1}\big\rangle}
\newcommand{\norm}[1]{\lVert{#1}\rVert}
\newcommand{\bignorm}[1]{\bigg\lVert{#1}\bigg\rVert}
\newcommand{\Id}{\mathrm{Id}}
\newcommand{\transpose}[1]{#1^{\top}}
\newcommand{\de}{\overset{d}{=}}
\newcommand{\dc}{\overset{d}{\rightarrow}}
\newcommand{\Rnot}{\mathbb{R}\setminus\{0\}}
\newcommand{\Rplus}{\mathbb{R}_{\ge}}
\newcommand{\tr}{\mathbf{tr}}
\newcommand{\SpaceRO}{\R_{\ge}\times\OrthogonalGroup}
\newcommand{\vecc}{\mathbf{vec}}
\newcommand{\Cov}{\mathrm{Cov}}
\def\C{{\mathbb{C}}}
\def\d{{\text{d}}}
\def\Q{{\widehat{Q}_+}}
\def\P{{\mathbb{P}}}
\def\R{{\mathbb{R}}}
\def\N{{\mathbb{N}}}
\def\E{{\mathbb{E}}}
\def\V{{\mathbb{V}}}
\def\Support{\mathbb{S}}
\def\T{{\mathbf{T}}}
\def\OrthogonalGroup{{\mathbb{O}(3)}}
\def\U{{\mathbb{U}}}
\def\G{\mathbb{G}}
\def\I{{\mathrm{I}}}
\def\mZ{{\mathcal{Z}}}
\def\mF{{\mathcal{F}}}
\def\mW{{\mathcal{W}}}
\def\bB{\mathbf{B}}
\def\as{\mathrm{a.s.}}
\def\sL{{\mathrm{L}}}
\def\sU{{\mathrm{U}}}
\renewcommand{\le}{\leqslant}\renewcommand{\leq}{\leqslant}
\renewcommand{\ge}{\geqslant}
\renewcommand{\setminus}{\smallsetminus}
\newcommand{\Sim}{\mathds{M}}
\renewcommand{\vec}[1]{\mathbf{vec}(#1)}
\begin{document}
	
	\title{A probabilistic study of the set of stationary solutions to  spatial kinetic-type equations}

\author{Sebastian Mentemeier and Glib Verovkin\thanks{Universit\"at Hildesheim, Institut f\"ur Mathematik und Angewandte Informatik, Hildesheim 31141, Germany. E-Mail: mentemeier@uni-hildesheim.de; verovkin@uni-hildesheim.de \\
The authors were partially supported by DFG grant ME 4473/1-1. }}

\maketitle

\begin{abstract}
	 
	 In this paper we study multivariate kinetic-type equations in a general setup, which includes in particular the spatially homogeneous Boltzmann equation with Maxwellian molecules, both with elastic and inelastic collisions. Using a representation of the collision operator derived in \cite{Bassetti+Ladelli+Matthes:2015,Dolera2014},
		  we prove the existence and uniqueness of time-dependent solutions with the help of continuous-time branching random walks, under assumptions as weak as possible. Our main objective is a characterisation of the set of stationary solutions, e.g. equilibrium solutions for inelastic kinetic-type equations, which we describe as mixtures of multidimensional stable laws. \\[.2cm]
	 \emph{Keywords}: Branching processes, Kinetic-type equation, central limit theorems , Inelastic Boltzmann Equation, Multidimensional Stable Laws, Multivariate smoothing equation, Multiplicative martingales \\[.2cm]
	 \emph{MSC 2020 Subject Classification}: Primary: 60J85, 
	 Secondary: 60F05, 
	 82C40 
\end{abstract}

\section{Introduction} 

\subsection{Kinetic-type equations}

In this article, we study kinetic-type equations that originate from the study of the time-dependent distribution of particle velocities in a spatially homogeneous, dilute ideal gas. Particles are assumed to interact only via binary collisions and to move independently before and after the collision has taken place ({\em Boltzmann's Stosszahlansatz}). 
There is an enormous body of literature devoted to the study of these equations, see \cite{Villani:2002} for a survey and \cite{Cercignani+Reinhard+Pulvirenti:2012} for a textbook introduction. 
If $f(v,t)$ denotes the density of particles with velocity $v \in \R^3$ at time $t \ge 0$, the spatial homogeneous Boltzmann equation reads
\begin{align} \label{eq:Boltzmann}
	\frac{\partial}{\partial t} f &= Q(f,f) \nonumber \\ 
	f(\cdot,0) &= f_0. 
\end{align}
where $Q$ is the so-called {\em collision operator}, describing the changes in the velocity distribution due to the collison of two random particles.
This equation should be understood in a formal way. In general, one does not assume that the distribution of particle velocities is absolutely continuous.

The form of the collision operator $Q$ depends on the type of the interaction between particles. The case of {\em Maxwellian molecules}, which interact with a repulsive force that decays like $r^{-5}$ has received a lot of attention, for it is mathematically tractable and yet provides good predictions (see \cite[Chapter 1.1.]{Villani:2002} for a detailled discussion).

In the setting of Maxwellian molecules, it makes sense to consider the Boltzmann equation in Fourier space, for there is an explicit expression for the ``Fourier version'' $\widehat{Q}$ of $Q$. In particular, $\widehat{Q}$ splits into a {\em collisional gain operator} $\widehat{Q}_+$ and a loss operator, which in this case is just the identity map. If $\phi_t$ denotes the Fourier transform of the particle velocity distribution $f(\cdot,t)$ at time t, then Eq.~\eqref{eq:Boltzmann} becomes
\begin{align} \label{eq:BoltzmannFourier}
	\frac{\partial}{\partial t} \phi_t = \widehat{Q}_+(\phi_t, \phi_t) - \phi_t , \qquad \phi_0=\phi_{f_0}.
\end{align}

Under the {\em cut-off assumption} that excludes the effect of grazing collisions 
(which would lead to a singularity in the collision kernel associated to $\widehat{Q}_+$), Bassetti, Ladelli and Matthes in ~\cite{Bassetti+Ladelli+Matthes:2015}, following Dolera and Regazzini \cite{Dolera2014}, have derived a probabilistic representation of $\widehat{Q}_+$, which is as follows. Denote by $SO(d)$ the special orthogonal group in dimension $d$, write $\top$ for transpose and denote by $e_i$, $i \in \{1,2,3\}$ the standard orthonormal basis of $\R^3$.  Let $\phi$ be the Fourier transform of a probability measure on $\R^3$, then for all $r \ge 0$ and $o \in SO(3)$,
\begin{equation} \label{eq:Maxwell}  \widehat{Q}_+\bigg(\phi, \phi \bigg) (r o e_3) = \E \bigg[ \phi \bigg(r  o R_1 O_1 e_3\bigg) \phi\bigg( r o R_2 O_2 e_3 \bigg) \bigg], \end{equation} 
where $R_1, R_2$ are nonnegative  random variables and $O_1$, $O_2$ are random rotations in $SO(3)$ having suitable probability distributions, see \cite[Section 3]{Bassetti+Ladelli+Matthes:2015} for details.

\medskip

In this paper, we are going to study the initial value problem associated to Eq. \eqref{eq:BoltzmannFourier} and the operator \eqref{eq:Maxwell} under assumptions as mild as possible.
 We are going to provide a representation of time-dependent solutions in terms of a continuous-time multitype branching process (see Theorem \ref{K:time-dependent_solution}). In our main result,  Theorem \ref{K:proposition_main_result}, we determine the set of solutions to the associated stationary equation
\begin{equation}\label{K:stationary_equation}
	\phi=\widehat{Q}_+(\phi,\phi),
\end{equation}
the solutions of which govern the long-term behavior of the time-dependent solutions. We consider both the elastic case (conservation of energy) as well as the inelastic case (loss of energy).

\subsection{Setup and assumptions}

We are going to study both time-dependent and stationary solutions to Eq. \eqref{eq:BoltzmannFourier} and \eqref{K:stationary_equation} respectively, assuming that $r, R_1, R_2$ take nonnegative  values and that $o,O_1,O_2$ belong to the group of orthogonal $3 \times 3$ matrices $\OrthogonalGroup$. In this section we present our assumptions imposed on the operator \eqref{eq:Maxwell} and the laws of $R_1,R_2,O_1,O_2$, which comfortably cover the case of the (inelastic) Boltzmann equation with Maxwellian molecules, as proved in \cite[Section 3]{Bassetti+Ladelli+Matthes:2015}.

Considering the function $m: [0, \infty) \to [0,\infty]$  defined by
\begin{equation*}
	m(\gamma):=\E[R_1^{\gamma}+R_2^{\gamma}],
\end{equation*}
our first assumption is
\begin{equation}\label{K:assumption_m} \tag{A1}
	m(\alpha) = 1\quad \text{for some }\alpha\in(0,2]
\end{equation}
The physical interpretation is as follows: $\alpha=2$ corresponds to the conservation of kinetic energy, while $0<\alpha<2$ means dissipation of energy, see e.g. \cite[Section 4]{Pulvirenti2004}.
One can interprete $R_1, R_2$ as the proportions of kinetic energy given to each one of two colliding particles. In this application it then follows $R_1 \le 1$ and $R_2\le 1$ a.s., with $\P(R_1=1 \text{ or } R_2=1)<1$. Hence,  the existence of $\alpha>0$ as in \eqref{K:assumption_m} is always guaranteed --
observe that the function $m$ is convex and hence continuous and differentiable on the interior of its domain. Also our second assumption,
\begin{equation}\label{K:assumption_derivative_m} \tag{A2}
	m'(\alpha)\in(-\infty,0)
\end{equation}
for the $\alpha$ from \eqref{K:assumption_m}, is always satisfied in the setting of kinetic equations.

 In order to make the definition of $\widehat{Q}_+$ in \eqref{eq:Maxwell} independent of a particular choice of $o$, we need to assume that
\begin{equation}\label{K:assumption_uniqueness} \tag{A3}
	(oR_1O_1e^{}_3,oR_2O_2e^{}_3)\overset{d}{=}(uR_1O_1e^{}_3,uR_2O_2e^{}_3)
\end{equation}
for any $o,u\in\OrthogonalGroup$, such that $oe^{}_3=ue^{}_3$. Here and below, $\stackrel{d}{=}$ denotes equality in distribution. 
Assumption \eqref{K:assumption_uniqueness} may  look restrictive at a first glance, but it is required for $\widehat{Q}_+$ to be well defined and it is satisfied in the main applications from statistical mechanics, see e.g.\ \cite[Section 3]{Bassetti+Ladelli+Matthes:2015} .
A trivial example, where \eqref{K:assumption_uniqueness} is satisfied, is when both $O_1,O_2$ act on the plane, spanned by $\{e_1,e_2\}$, i.e are of the form
\begin{equation*}
	\begin{pmatrix}
		\star & \star & 0 \\
		\star & \star & 0 \\
		0 & 0 & 1
	\end{pmatrix}.
\end{equation*}
In this case, moreover, the equality in \eqref{K:assumption_uniqueness} holds almost surely.
Note that, except for \eqref{K:assumption_uniqueness}, we allow for any dependence structure  within and between the tuples $(R_1,O_1)$ and $(R_2,O_2)$.

When studying stationary solutions satisfying Eq. \eqref{K:stationary_equation}, we assume that $\phi$ is the characteristic function of a random vector $X$ with the property
\begin{equation} \tag{A4}\label{K:assumption_regular_variation}
	\P(\norm{X}>t)\le Ct^{-\alpha}\quad  \text{for all } t>0
	\end{equation}
	for some constant $C$.
This assumption is much weaker than assuming that $\norm{X}$ is in the normal domain of attraction of an $\alpha$-stable law, which would entail that $\P(\norm{X}>t)$ is of precise order $t^{-\alpha}$, see \cite[§ 35]{Gnedenko1954}.

\subsection{Adhoc statement of the main result}

To not repeat ourselves, we will introduce all necessary notation and objects in Section \ref{sect:branching processes}  before stating the main result in full detail in Section \ref{sect:main results}. However, for the reader's convenience, we formulate an adhoc version of the classification result for stationary solutions in the most simple, yet instructive case, when $\alpha$ in \eqref{K:assumption_m} is in $(0,1)$.

Denote by $\OrthogonalGroup$ the multiplicative group of orthogonal 3$\times3$ matrices, by $\R_>$ ($\R_\ge$) the multiplicative group of positive (the set of nonnegative) real numbers, and denote by $\Sim:=\R_>\times\OrthogonalGroup$ the group of similarity matrices, \textit{i.e.}, products of an orthogonal matrix and a dilation. Equipping each set with the corresponding Borel $\sigma$-field, we obtain measurable spaces.
We denote by $\Support$ the smallest closed subgroup of $\Sim$  that contains $\{(R_1,O_1), (R_2,O_2)\}$ with probability 1.

\begin{thm}
	Assume \eqref{K:assumption_m}-\eqref{K:assumption_regular_variation} with $\alpha \in (0,1)$. There exists a nonnegative random variable $W_\infty$ with mean one such that a characteristic function $\phi$ is a solution to \eqref{K:stationary_equation} if and only if
	$$ \phi(roe_3)=\E[e^{-W_\infty K(r,o)}]$$
	for a function $K: \SpaceRO \to \C$ satisfying $K(r,o)=s^{-\alpha} K(rs,ou)$ for all $(s,u)\in \Support$ and $K(0,o)=0$ for any $o\in\OrthogonalGroup$. 
\end{thm}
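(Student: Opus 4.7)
The plan is to prove the ``if'' direction by direct substitution and the ``only if'' direction via a multiplicative martingale along the continuous-time branching random walk (BRW) introduced in Section \ref{sect:branching processes}. Throughout, let $\mathcal{N}_t$ denote the particles alive at time $t$, each carrying along its line of descent a cumulative scale $R_i(t)\in\R_\ge$ and a cumulative rotation $O_i(t)\in\OrthogonalGroup$. Under \eqref{K:assumption_m}--\eqref{K:assumption_derivative_m}, the intrinsic additive martingale $W_t := \sum_{i\in\mathcal{N}_t} R_i(t)^\alpha$ converges almost surely and in $L^1$ to a limit $W_\infty$ with $\E W_\infty = 1$; this is the variable appearing in the statement, which would be constructed in the preceding sections via a Biggins-type argument.

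For the ``if'' direction, fix $K$ with the stated homogeneity and substitute into the definition $\phi(roe_3)=\E[e^{-W_\infty K(r,o)}]$ the one-step distributional fixed point
\begin{equation*}
W_\infty \de R_1^\alpha W_\infty^{(1)} + R_2^\alpha W_\infty^{(2)},
\end{equation*}
with i.i.d.\ copies $W_\infty^{(1)}, W_\infty^{(2)}$ independent of $(R_1,O_1,R_2,O_2)$, arising from conditioning on the first branching event. Since $(R_i,O_i)\in\Support$ almost surely, the homogeneity gives $R_i^\alpha K(r,o)=K(rR_i,oO_i)$, and a short conditional-independence calculation yields
\begin{equation*}
\phi(roe_3) \;=\; \E\!\left[\prod_{i=1,2}\phi(rR_i\,oO_i\,e_3)\right] \;=\; \widehat{Q}_+(\phi,\phi)(roe_3).
\end{equation*}

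For the ``only if'' direction, given any solution $\phi$, the process
\begin{equation*}
M_t(r,o) \;:=\; \prod_{i\in\mathcal{N}_t} \phi\bigl(rR_i(t)\,oO_i(t)\,e_3\bigr)
\end{equation*}
is a martingale in the natural BRW filtration by \eqref{K:stationary_equation} and satisfies $|M_t|\le 1$, hence $M_t(r,o)\to M_\infty(r,o)$ almost surely and in $L^1$, giving $\phi(roe_3)=\E[M_\infty(r,o)]$. The goal is to identify $M_\infty(r,o)=\exp(-W_\infty K(r,o))$ for a deterministic $K$ with the stated properties. The tail bound \eqref{K:assumption_regular_variation} together with $\alpha\in(0,1)$ delivers, via a Tauberian/Karamata argument, the expansion $-\log\phi(sv)=s^\alpha K_0(v)(1+o(1))$ as $s\to 0$, uniform in the direction of $v$; since \eqref{K:assumption_derivative_m} forces $\max_{i\in\mathcal{N}_t}R_i(t)\to 0$ almost surely, one obtains
\begin{equation*}
-\log M_t(r,o) \;\sim\; \sum_{i\in\mathcal{N}_t} R_i(t)^\alpha\,K_0\bigl(roO_i(t)e_3\bigr),
\end{equation*}
and a Biggins--Kyprianou-type strong law for direction-weighted BRW martingales forces the right-hand side to converge to $W_\infty K(r,o)$, with the support assumption $\Support$ ensuring ergodic averaging on the relevant sphere orbit. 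The homogeneity $K(r,o)=s^{-\alpha}K(rs,ou)$ for $(s,u)\in\Support$ is then forced by comparing $M_\infty(r,o)$ to its restart after one branching step, and $K(0,o)=0$ follows from $\phi(0)=1$ combined with $\E W_\infty=1$.

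The step I expect to be most delicate is this last identification of the martingale limit, i.e.\ replacing the direction-weighted sum by $K(r,o)W_t$ in the limit. It requires, first, a direction-uniform version of the Tauberian expansion of $1-\phi$, upgrading the radial tail bound \eqref{K:assumption_regular_variation} to a joint radial/angular asymptotic; and second, a many-to-one strong law over the BRW for angular averages, whose ergodic hypothesis is precisely what the support assumption $\Support$ supplies via irreducibility of the induced Markov chain on the sphere. The case $\alpha\in(0,1)$ is favourable here because the expansion of $-\log\phi$ on small scales has no cancellations and the sum $\sum_i R_i(t)^\alpha$ itself already forms the intrinsic martingale, so no centring is required.
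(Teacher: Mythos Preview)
Your ``if'' direction is correct and matches the paper's argument essentially verbatim. The ``only if'' direction, however, has a genuine gap at the Tauberian step.

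Assumption \eqref{K:assumption_regular_variation} is only an \emph{upper} bound $\P(\norm{X}>t)\le Ct^{-\alpha}$; it does not assert regular variation or membership in a domain of attraction. Hence you cannot extract an asymptotic $-\log\phi(sv)=s^\alpha K_0(v)(1+o(1))$ from it; at best you get $|1-\phi(sv)|\le C's^\alpha$. The paper stresses this point explicitly: \eqref{K:assumption_regular_variation} is strictly weaker than normal domain of attraction and is satisfied, for instance, by any $X$ with finite moment of order $\alpha$, for which your claimed expansion fails outright. So the identification $-\log M_t\sim K(r,o)W_t$ cannot be obtained this way, and the subsequent ``Biggins--Kyprianou strong law for direction-weighted martingales'' has nothing to act on. You flagged this step as delicate, but the issue is not delicacy---under the stated hypotheses the expansion is simply false in general.

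The paper avoids any small-argument expansion of $\phi$. Instead it observes that for each fixed $o$ the limit $r\mapsto M_\infty(r,o)$ is the characteristic function of a limit of row sums in a triangular null array (since $\sup_{|v|=n}L(v)\to 0$), hence infinitely divisible with a random L\'evy triplet $(\mu^o,\Theta^o,\nu^o)$; see Proposition \ref{prop:SimultaneousConvergence} and Proposition \ref{K:proposition_solution_via_multiplicative_martingale_M}. The branching identity for $M_\infty$ then yields recursive equations for each component of the triplet. The crucial step is Lemma \ref{K:proposition_levy_triplet_nu_characteristics}: the tail bound \eqref{K:assumption_regular_variation} gives $\nu^o((1/r,\infty))\le Cr^\alpha W_\infty$, so $\eta_+(r,o):=\E[r^{-\alpha}\nu^o((1/r,\infty))]$ is bounded; the many-to-one formula shows $\eta_+$ is harmonic for the associated random walk on $\Support$, and the Choquet--Deny lemma (Lemma \ref{K:Choquet-Deny-lemma}) forces it to be constant along $\Support$-orbits. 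This yields $\nu^o=W_\infty\bar\nu^o$ for a deterministic $(\Support,\alpha)$-invariant family, from which $K$ is built. Finally, for $\alpha<1$, one shows $\mu^o$ and $\Theta^o$ are bounded by $CW_\infty$ uniformly in $o$ (Lemmata \ref{K:proposition_uniform_boundedness_of_mu_o}, \ref{K:proposition_uniform_boundedness_of_theta_o}) and then uses $\sup_{|v|=n}L(v)^{1-\alpha}\to 0$ and $\sup_{|v|=n}L(v)^{2-\alpha}\to 0$ in the branching equations to kill the drift and Gaussian parts. The Choquet--Deny argument is precisely what replaces your hoped-for ergodic averaging, and it works under the mere upper bound \eqref{K:assumption_regular_variation}.
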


\subsection{Related works}
The probabilistic form of the Boltzmann equation that is used in our work is based on \cite{Bassetti+Ladelli+Matthes:2015}, whereas the original idea behind it has been first introduced in\linebreak \cite{Dolera2014}.
In \cite{Dolera2014} time-dependent solutions to the spatially homogeneous Boltzmann equation have been considered assuming existence of the fourth moment of the initial condition $f_0$. The authors derived a uniform upper bound on the total variation distance between the solution and the limiting Maxwellian distribution. An explicit form of the stationary solution has also been established, given by a weak limit of its time-dependent counterpart.
In \cite[Theorem 2.3]{Bassetti+Ladelli+Matthes:2015} the set of stationary solutions in the domain of normal attraction of an $\alpha$-stable law was studied, assuming in addition the existence of a density. These two assumptions will in particular be removed in our main result, Theorem \ref{K:proposition_main_result}.
We also refer the reader to a list of further references given in that paper, see \cite[Section 1.2]{Bassetti+Ladelli+Matthes:2015}. 

Starting from the seminal work of Kac \cite{Kac1956}, various probabilistic interpretations of solutions have been investigated, we mention here in particular \cite{Carlen2000, Gabetta2008, Bassetti+Ladelli+Matthes:2010, Bassetti2012}. In our work, we will introduce for the multivariate setting a representation of solutions with the help of continuous-time branching random walks, based on the univariate techniques developed recently in \cite{Bogus+Buraczewski+Marynych:2020, Buraczewski+Kolesko+Meiners:2021,Buraczewski+Dyszewski+Marynych:2023}.

 To study stationary equations, we pursue strategies that have been developed for the study of fixed points of {\em smoothing equations}, see e.g. \cite{Alsmeyer+Biggins+Meiners:2012, Alsmeyer+Meiners:2013(9),Meiners+Mentemeier:2017}. 
We may refer to $\widehat{Q}_+$ as nonlinear smoothing operator because of the following relation. A random vector $X$ is said to satisfy a (linear) multivariate smoothing equation with matricial weights, if 
\begin{equation}\label{eq:homogeneous smoothing}
	X \stackrel{d}{=} \sum_{j=1}^N T_j X_j,
\end{equation}
where  $X_1, X_2, \dots$ are i.i.d.\ copies of $X$, that are independent of the (conditionally given) weights $T_1, \dots, T_N \in \R^{d \times d}$; here the number $N$ of weights may also be random. Then the characteristic function $\phi$ of $X$ satisfies the equation
\begin{equation}\label{eq:FE homogeneous smoothing}
	 \phi(\xi) = \E \bigg[ \prod_{j=1}^N \phi(T_j^\top \xi) \bigg]
\end{equation} for all $\xi \in \R^d$.
Despite the formal similarity of equations \eqref{eq:FE homogeneous smoothing} and \eqref{eq:Maxwell}, the action of $\widehat{Q}_+$ does not allow for a representation as a weighted sum of random variables as in \eqref{eq:homogeneous smoothing}, due to the noncommutativity of matrix products: $o$ and $O_1$, $O_2$ do not commute.

The set of solutions to \eqref{eq:homogeneous smoothing} in the case where $T_j \in \Sim$, has been determined in \cite{Meiners+Mentemeier:2017}. It is instructive to compare it with our result.

\subsubsection*{Structure of the paper}
We proceed by introducing necessary notation and objects from the realm of branching processes in Section \ref{sect:branching processes}. With this at hand, we are ready to formulate our results in full detail in Section \ref{sect:main results}. Proofs for the existence and uniqueness of time-dependent solutions are given in Section \ref{sect:proofs time dependent}; the study of stationary solutions is conducted in Section \ref{sect:proofs stationary}. In Section \ref{sect:geometry} we investigate the structure of stationary solutions in more detail.

\section{Branching processes and martingales} \label{sect:branching processes}

A  probabilistic representation (using branching processes) of time-dependent solutions to Eq. \eqref{eq:BoltzmannFourier} was given in \cite{Bassetti+Ladelli+Matthes:2010}, where the authors used a discrete-time model based on Wild series to derive a time-dependent solution.
In \cite{Bogus+Buraczewski+Marynych:2020,Buraczewski+Dyszewski+Marynych:2023}
it was observed that this construction can be simplified by using a continuous-time branching random walk. We will follow their construction to obtain a representation of both time-dependent and stationary solutions as functionals of a certain branching process.

\subsection{A weighted branching process}
Let $\T=\bigcup_{n\in\N_0}\{1,2\}^n$ be an infinite binary tree with the root $\emptyset$. For a node $v=v_1\ldots v_n\in\T$, where $v_i\in\{1,2\}$ for any $i=1,\ldots,n$, we say that $v$ is in the $n$-th generation and write $\abs{v}=n$. For any $k=0,\ldots,n-1$ the ancestor of $v$ in the $k$-th generation is denoted by $v|_k:=v_1 \cdots v_k$. For any $j\in\{1,2\}$ the $j$-th descendant of $v$ in the $(n+1)$-th generation is  $vj:=v_1\ldots v_nj$. 

Let $E$ be an exponential random variable with unit mean, independent of $(R_1,O_1,R_2,O_2)$. Assign to each node $v \in \T$ an i.i.d. copy $(E(v),R_1(v),O_1(v),R_2(v),O_2(v))$ of the tuple $(E,R_1,O_1,R_2,O_2)$.
 We call the family $\bB:=(E(v),R_j(v),O_j(v):j\in\{1,2\})_{v\in\T}$ the weighted branching process associated to the tuple $(E,R_j,O_j:j\in\{1,2\})$, rooted at $\emptyset$. 
We denote by $F:=(F_n)_{n \in \N_0}$ the natural filtration for $\bB$, associated to generations, namely
\begin{equation}\label{K:discrete_time_sigma_algebra}
	F_n:=\sigma((E(v),L(v),U(v)),\abs{v}\leq n)
\end{equation} 
for any $n\in\N_0$. 

For any $w\in\T$ we consider the shifted weighted branching process rooted at $w$, defined by  $$[\bB]_w:=(E(wv),R_j(wv),O_j(wv):j\in\{1,2\})_{v\in\T}.$$
Notice, that  $\bB$ and $[\bB]_w$ have the same law. For any function of the weighted branching process $\zeta=\zeta(\bB)$ we can define a function $[\zeta]_w$ as $\zeta([\bB]_w)$. 

For each node $v\in\T$ we define 
 recursively the quantities
\begin{equation}
	L(\emptyset):=1,\quad L(vj):=L(v)R_j(v)
\end{equation}
and 
\begin{equation}
	U(\emptyset):=\Id, \quad U(vj):=U(v)O_j(v),
\end{equation}
with $\Id \in \OrthogonalGroup$ denoting the identity matrix.
Therefore, for any $v=v_1\ldots v_n$ 
we have
\begin{equation} 
	L(v):=R_{v_1}(\emptyset)\ldots R_{v_n}(v|_{n-1})
\end{equation}
and
\begin{equation} 
	U(v):=O_{v_1}(\emptyset)\ldots O_{v_n}(v|_{n-1}).
\end{equation}
Observe that, by \cite[Lemma 7.2]{Liu1998}, assumption \eqref{K:assumption_m}  implies 
\begin{equation}\label{K:supremum_L(v)_goes_to_0}
	\lim_{n\rightarrow\infty}\sup_{\abs{v}=n}L(v)=0\quad\as
\end{equation}

Recall that $\Support$ denotes the smallest closed subgroup of $\Sim$  that contains $\{(R_1,O_1), (R_2,O_2)\}$ with probability 1. Hence $\{(L(v),U(v))\, : \, {v\in\T}\} \subset \Support$ with probability 1 as well.
In the same way, let $\G$ and $\U$ be the smallest closed subgroups of $\R_>$ and $\OrthogonalGroup$,  such that $\P\big(\{L(v) \, : \, v \in \T\} \in \G\big)=1$ and $\P \big(\{U(v) \, : \, v \in \T\} \in \U\big)=1$, respectively. 
Note, that in general $\G\times \U\neq \Support$.

\subsection{Branching random walk}
In what follows we will consider nodes in $\T$ as particles of a multiplicative  continuous-time branching random walk on the group $\Sim=\R_>\times\OrthogonalGroup$. For any  such $v\in\T$ the component $E(v)$ will be interpreted as the lifetime of a particle $v$, whereas the tuple $(L(v),U(v))$ will describe the position of $v$ in space $\SpaceRO$. However, before we are able to introduce the formal definition of the branching random walk, certain technical steps should be done first.

For each $v \in \T$ we define the following two random variables
\begin{equation}\label{K:birth_death_times}
	B(v):=\sum_{k=1}^{\abs{v}-1}E(v|_k)\quad\text{and}\quad D(v):=B(v)+E(v).
\end{equation} 
denoting the birth time and the death time of particle $v$, respectively, with the convention $B(\emptyset)=0$.
Further, we define for any $t\ge 0$ the following two random subsets of $\T$:
\begin{equation}
	\T_t:=\{v\in\T:B(v)\leq t\}
\end{equation} 
and
\begin{equation}
	\partial \T_t:=\{v\in\T:B(v)\leq t< D(v)\}.
\end{equation} 
The first set contains all particles which have been born up to time $t$, whereas the second one contains particles that are alive at time $t$. We denote by $\mF:=(\mF_t)_{t \ge 0}$ the filtration  generated by particles in the set $\T_t$,
\begin{equation}\label{continuous_time_sigma_algebra}
	\mF_t:=\sigma(E(v),L(v),U(v):v\in\T_t).
\end{equation}
It can easily be checked, that 
\begin{equation*}
	\mF_0=F_0\quad\text{and}\quad\lim_{t\rightarrow\infty}\mF_t=\lim_{n\rightarrow\infty}F_n.
\end{equation*}

We will denote the above limit $\lim_{n\rightarrow\infty}F_n=:F_\infty$, which can informally be  perceived as the total information conveyed by the process $\bB$.

A continuous-time multiplicative branching random walk in $\Sim$ is then given by the point process $\mZ=(\mZ_t)_{t\in\Rplus}$, defined by
\begin{equation}\label{K:continuous_BRW}
	\int f(y,u)\d\mZ_t(y,u) := \sum_{v\in\partial T_t} f(L(v),U(v))
\end{equation}
for all $t \ge 0$ and nonnegative measurable functions $f$ on $\Sim$. 
Its dynamics can be described as follows. An initial particle is located in $(1,\Id)$. After an exponential time with unit mean it dies and gives birth to two new particles, which are distributed on $\Sim$ according to the point process $\sum_{j=1}^{2}\delta_{(R_j,O_j)}$. The two descendants reproduce in exactly the same way as their ancestor, independently from the latter and also from each other.

As in formula \eqref{K:continuous_BRW}, throughout our work we will omit the lower index under the integral sign whenever the integration area is the entire domain of an underlying measure.

Since $(\mZ_t)_{t\in\Rplus}$ is, by construction, a function of a weighted branching process $\bB$, a shifted continuous-time branching random walk $([\mZ_t]_w)_{t\in\Rplus}$ is well-defined for any $w\in\T$:
\begin{equation*}
	[\mZ_t]_w:=\sum_{v\in[\partial \T_t]_w}\delta_{(L(wv),U(wv))},\quad t\ge 0,
\end{equation*}
where $[\partial \T_t]_w=\{v\in\T:B(wv)\leq t< D(wv)\}$.

By sorting all particles alive at time $t+h$ according to their ancestors alive at time $t$, we derive the following branching property for $\mZ$,  which will be given without proof.

\begin{lem}\label{K:proposition_continuous_BRW_branching_property}
	For all $t,h \ge 0$ and all measurable sets $A \in \R_>$, $B \in \OrthogonalGroup$ it holds
	\begin{equation}\label{K:continuous_BRW_branching_property}
		\mZ_{t+h}(A,B)\de\sum_{w\in\partial \T_t}[\mZ_{h}]_w\bigg(L(w)^{-1}A,U(w)^{-1}B\bigg).
	\end{equation}
\end{lem}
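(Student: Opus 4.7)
The plan is to establish the identity by partitioning $\partial\T_{t+h}$ according to the unique ancestor in $\partial\T_t$, translating the descendant structure into the shifted branching process $[\bB]_w$, and closing the gap with the memoryless property of the exponential lifetimes.

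First, every $v \in \partial\T_{t+h}$ factors uniquely as $v = wv'$ for some $w \in \partial\T_t$ and $v' \in \T$. A direct telescoping of the recursive definitions of $L, U, B, D$ yields the path-product identities $L(wv') = L(w)\,L^{[\bB]_w}(v')$, $U(wv') = U(w)\,U^{[\bB]_w}(v')$, $B(wv') = D(w) + B^{[\bB]_w}(v')$ and $D(wv') = D(w) + D^{[\bB]_w}(v')$, where each superscripted functional denotes the analogous quantity computed inside the shifted weighted branching process $[\bB]_w$. Substituting these identities into the definition of $\mZ_{t+h}(A,B)$ rewrites it, almost surely, as the sum over $w \in \partial\T_t$ of the number of $v' \in \T$ satisfying $B^{[\bB]_w}(v') \le t+h-D(w) < D^{[\bB]_w}(v')$, $L^{[\bB]_w}(v') \in L(w)^{-1} A$ and $U^{[\bB]_w}(v') \in U(w)^{-1} B$.

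Next, for each $w \in \partial\T_t$ the residual lifetime $\tau_w := D(w) - t \ge 0$ is an $\mathrm{Exp}(1)$ variable by the memoryless property of $E(w)$, independent of all weights attached to strict descendants of $w$. Conditionally on $\partial\T_t$ and the positions $(L(w), U(w))_{w \in \partial\T_t}$, the subtree emanating from each $w$ over the interval $[t, t+h]$ therefore evolves as an independent copy of the original BRW starting from $(L(w), U(w))$ and run for time $h$, which is exactly what $[\mZ_h]_w(L(w)^{-1}A, U(w)^{-1}B)$ encodes once the multiplicative shift by $(L(w), U(w))$ is undone. Since, for distinct $w, w' \in \partial\T_t$, the shifted processes $[\bB]_w$ and $[\bB]_{w'}$ are built from disjoint sub-families of the i.i.d.\ weights assigned to $\T$, they are mutually independent, and summing the independent contributions over $w$ yields the asserted distributional equality.

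The main obstacle is precisely the memoryless step in the previous paragraph: because the filtration $\mF_t$ as defined already records $E(w)$ for each $w \in \partial\T_t$, the ``fresh'' exponential lifetime of the root of $[\bB]_w$ must be produced by a careful conditioning argument rather than by a literal new draw. The cleanest route is to work with the sub-$\sigma$-algebra generated only by the genealogy of $\T_t$ and by the positions $(L(w), U(w))_{w \in \partial\T_t}$, and then to integrate out the residual lifetimes $\tau_w$ via the memoryless property before matching characteristic functions on the two sides of the claimed identity.
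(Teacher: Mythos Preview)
The paper itself provides no proof of this lemma; it explicitly states that the branching property ``will be given without proof.'' Your proposal supplies exactly the standard argument one would expect: decompose particles alive at time $t+h$ according to their ancestor in $\partial\T_t$, use the multiplicative structure of $(L,U)$, and invoke the memoryless property of the exponential lifetimes to produce the fresh $\mathrm{Exp}(1)$ clock at each $w\in\partial\T_t$. This is correct and is the intended route.

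One small slip worth fixing: your ``telescoping'' identities for the birth and death times are off by one term. With the definitions in the paper, one has
\[
B(wv')=B(w)+B^{[\bB]_w}(v'),\qquad D(wv')=B(w)+D^{[\bB]_w}(v'),
\]
not $D(w)+\cdots$. Consequently, the almost-sure rewriting of $\mZ_{t+h}(A,B)$ leads to the constraint $B^{[\bB]_w}(v')\le t+h-B(w)<D^{[\bB]_w}(v')$, and it is precisely the gap between $t+h-B(w)$ and $h$ that forces you into the memoryless argument you already flag as the main obstacle. You handle that step correctly: conditionally on the genealogy of $\T_t$ together with the positions $(L(w),U(w))_{w\in\partial\T_t}$ (but \emph{not} on the full $\mF_t$, which already records $E(w)$), the residual lifetimes $D(w)-t$ are independent $\mathrm{Exp}(1)$ variables, and the subtrees rooted at distinct $w\in\partial\T_t$ are independent copies of the original process. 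That is all the lemma requires.
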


\subsection{Associated random walks}
In this section, we define two associated random walks, one in continuous time and one in discrete time, by means of many-to-one formulae.

\begin{lem}\label{K:many-to-one-lemma}
	Consider $\gamma\ge 0$ with $m(\gamma)<\infty$. Let $(\sL^{(\gamma)}_n,\sU^{(\gamma)}_n)_{n\in\N_0}$ be a multiplicative random walk on $\Sim$ starting in  $(\sL^{(\gamma)}_0,\sU^{(\gamma)}_0):=(1,\Id)$ and increment
	law $\rho$ defined by
	$$ \rho(A \times B) := \frac{1}{m(\gamma)}\E \bigg[\sum_{j=1}^2R_j^{\gamma} \,  \1_A(R_j) \1_B(O_j) \bigg]$$
	for all measurable $A \subset \R_>$, $B \subset \OrthogonalGroup$.
 Then, for any $n \in \N$ and any nonnegative measurable function $h$ the following identity holds:
	\begin{align}
		& \E \big[ h(\sL^{(\gamma)}_1,\ldots,\sL^{(\gamma)}_n,\sU^{(\gamma)}_1,\ldots,\sU^{(\gamma)}_n) \big] \nonumber \\
		&=\frac{1}{m(\gamma)^n}\E \bigg[ \sum_{\abs{v}=n} L(v)^{\gamma} h\bigg((L(v|_k))_k, (U(v|_k))_k;~k=1,\ldots,n\bigg) \bigg] \label{K:many-to-one_argument}
	\end{align}
\end{lem}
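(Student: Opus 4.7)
My plan is to prove \eqref{K:many-to-one_argument} by induction on $n$, using the recursive (size-biased) structure of the weighted branching process. The base case $n=1$ is essentially the definition of $\rho$: on the right-hand side only $|v|=1$ contributes, so it reads
\[
\frac{1}{m(\gamma)}\E\Bigl[\sum_{j=1}^2 R_j^{\gamma}\, h(R_j, O_j)\Bigr] = \int h(r,o)\,d\rho(r,o) = \E\bigl[h(\sL_1^{(\gamma)},\sU_1^{(\gamma)})\bigr],
\]
since $(\sL_1^{(\gamma)},\sU_1^{(\gamma)})$ has distribution $\rho$ (recall $\sL_0^{(\gamma)}=1, \sU_0^{(\gamma)}=\Id$).

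For the inductive step, assume the identity at stage $n$ and split the sum over $|v|=n+1$ according to the first coordinate $j=v_1\in\{1,2\}$. Writing $v=j v'$ with $|v'|=n$ in the shifted tree rooted at $j$, the multiplicative cocycle property gives $L(v)=R_j(\emptyset)\cdot[L]_j(v')$ and $U(v)=O_j(\emptyset)\cdot[U]_j(v')$, and analogously for the ancestor weights. Hence
\begin{align*}
\sum_{|v|=n+1} L(v)^{\gamma} h\bigl((L(v|_k))_{k=1}^{n+1},(U(v|_k))_{k=1}^{n+1}\bigr)
= \sum_{j=1}^{2} R_j^{\gamma}\sum_{|v'|=n}[L]_j(v')^{\gamma}\, H_j\bigl([\bB]_j,v'\bigr),
\end{align*}
where the function $H_j$ simply prepends $(R_j,O_j)$ to the ancestor coordinates coming from the subtree. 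The key independence fact I would invoke is that $[\bB]_j$ is independent of $(R_1,O_1,R_2,O_2)$ and has the same law as $\bB$. Taking expectations, conditioning on $(R_j,O_j)_{j=1,2}$ and applying the induction hypothesis to each of the two inner sums turns them into expectations over the random walk $(\sL^{(\gamma)},\sU^{(\gamma)})$ of length $n$ started afresh.

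After this substitution the outer expectation becomes
\[
\frac{1}{m(\gamma)^n}\E\Bigl[\sum_{j=1}^{2} R_j^{\gamma}\, \E\bigl[h(R_j,R_j\sL_1^{(\gamma)},\ldots;\,O_j,O_j\sU_1^{(\gamma)},\ldots)\,\big|\,R_j,O_j\bigr]\Bigr],
\]
and one more application of the definition of $\rho$ absorbs the factor $\sum_j R_j^{\gamma}\,(\cdot)$ into an increment of the random walk, at the cost of one more factor $m(\gamma)^{-1}$. The resulting expression equals the left-hand side at stage $n+1$ by the Markov property of the random walk.

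The only real obstacle is bookkeeping: keeping track of the two factors $m(\gamma)^{-n}$ coming from the induction hypothesis versus $m(\gamma)^{-(n+1)}$ in the target formula, and making sure the composition $(R_j,O_j)\cdot([L]_j(v|_k'),[U]_j(v|_k'))$ correctly reproduces the multiplicative random-walk increments; this requires a monotone class / standard machine argument to pass from indicator functions of product cylinders to general nonnegative measurable $h$, which is routine given the $\sigma$-finiteness of the measures involved.
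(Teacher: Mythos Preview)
Your induction argument is correct and is exactly the standard proof of the many-to-one formula; the paper itself does not spell out a proof but simply refers to \cite[Theorem 1.1]{Shi:2012}, where the univariate version is proved by the same inductive scheme you describe. Your bookkeeping is right: applying the induction hypothesis to each subtree $[\bB]_j$ produces a factor $m(\gamma)^n$, and the remaining size-biased average $\tfrac{1}{m(\gamma)}\E\big[\sum_j R_j^\gamma\,g(R_j,O_j)\big]$ is precisely integration against $\rho$, which together with the i.i.d.\ increment structure of $(\sL^{(\gamma)},\sU^{(\gamma)})$ yields the identity at stage $n+1$.
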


Many-to-one results are standard tools in the theory of branching processes. We refrain from giving details of the proof. A proof for a univariate version can be found e.g. in \cite[Theorem 1.1]{Shi:2012}.

 We will be particularly interested in the case $\gamma=\alpha$ and, to simplify the notation, we will write $\sL_n:=\sL^{(\alpha)}_n$ and $\sU_n:=\sU^{(\alpha)}_n$, for any $n\ge 0$.
We observe that $\Support$ is also the smallest closed semigroup containing $(\sL_n,\sU_n)_{n \ge 0}$ with probability 1, as well as $\U$ and $\G$ contain $(\sU_n)_{n \ge 0}$ and $(\sL_n)_{n \ge 0}$ with probability 1, respectively. 
We finish this section by quoting the following Choquet-Deny-type result, which proves to be useful for studying solutions to the stationary equation.
\begin{lem}\label{K:Choquet-Deny-lemma} 
	Let $\rho$ be a probability law on $\Sim$ and let $\Support$ be the smallest closed subgroup of $\Sim$, generated by $\rho$. Suppose $\xi:\Support\rightarrow\R$ is measurable and bounded. Then, if
	\begin{equation}\label{K:Choquet-Deny_functional_equation}
		\E \bigg[\xi(s\sL_1,u\sU_1)\bigg] = \xi(s,u)
	\end{equation}
	holds for any $(s,u)\in\Support$, where the expectation is taken w.r.t. to $\rho$, then $\xi$ is constant $\rho$-a.e.
\end{lem}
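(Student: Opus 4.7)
My approach is the standard martingale plus Choquet–Deny argument, exploiting the fact that $\Support$ is a compact-by-abelian closed subgroup of $\Sim=\R_>\times\OrthogonalGroup$.

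First, I would fix $(s,u)\in\Support$ and set $M_n:=\xi(s\sL_n,u\sU_n)$. The harmonicity assumption \eqref{K:Choquet-Deny_functional_equation}, iterated via the Markov property of the walk $(\sL_n,\sU_n)$, makes $(M_n)_{n\ge 0}$ a bounded martingale with respect to its natural filtration, so bounded martingale convergence yields $M_n\to M_\infty$ almost surely and in $L^p$ for every $p\ge 1$, with $\xi(s,u)=\E[M_\infty]$. It suffices to prove that $M_\infty$ is $\P$-almost surely constant for $\rho$-a.e.\ starting point $(s,u)$, for then $\xi(s,u)=\E[M_\infty]$ equals a common value $c$ on a $\rho$-full-measure set.

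The key structural observation is that $\Support$ is compact-by-abelian: letting $N:=\Support\cap(\{1\}\times\OrthogonalGroup)$, $N$ is a closed (hence compact) normal subgroup of $\Support$, while $\Support/N$ embeds into the abelian group $\R_>$ via the projection onto the first coordinate (and that projection is precisely $\G$). I would average $\xi$ against the Haar probability $\mu_N$ on $N$ to form
\[
\bar\xi(g):=\int_N\xi(gn)\,\mu_N(dn).
\]
Right-invariance of $\mu_N$ shows $\bar\xi$ is $N$-invariant and hence descends to $\Support/N$; the invariance of Haar measure on the compact group $N$ under inner automorphisms by elements of $\Support$ (which act on $N$ by group automorphisms, since $N$ is normal) combined with the harmonicity of $\xi$ gives $\bar\xi(g)=\E[\bar\xi(gX_1)]$. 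Thus $\bar\xi$ is a bounded harmonic function on the abelian quotient $\Support/N\cong\G$, and the classical Choquet–Deny theorem on $(\R,+)$ (applied after taking logarithms) forces $\bar\xi\equiv c$.

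The main obstacle is the last step, upgrading constancy of the $N$-average $\bar\xi$ to constancy of $\xi$ itself $\rho$-almost everywhere. The plan is to exploit aperiodicity of the walk on the compact factor $N$ (which follows from $\Support$ being the smallest closed subgroup generated by the walk), combined with the iterated harmonicity $\xi(s,u)=\E[\xi(s\sL_n,u\sU_n)]$. Writing the martingale's quadratic variation as
\[
\E\big[(M_\infty-\xi(s,u))^2\big]=\sum_{k\ge 1}\E\,\mathrm{Var}\big(\xi(g_{k-1}X_k)\,\big|\,g_{k-1}\big),
\]
with $g_{k-1}:=(s\sL_{k-1},u\sU_{k-1})$ and $X_k$ the $k$-th i.i.d.\ increment, the desired conclusion is that the per-step conditional variance $\mathrm{Var}(\xi(gX_1)\mid g)$ vanishes at $\rho$-a.e.\ $g$ — which is precisely the statement that $\xi$ is $\rho$-a.e.\ constant. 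The delicate point is that $\xi$ is only bounded measurable rather than continuous, so the relevant limit transitions (Haar equidistribution on $N$-cosets, and passage from $\bar\xi\equiv c$ to $\xi\equiv c$) have to be performed in an $L^2$-sense using Peter–Weyl-type decomposition of $L^2(N,\mu_N)$, rather than pointwise.
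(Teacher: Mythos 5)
The paper itself does not prove this lemma: it is imported from the literature via Guivarc'h's Theorem 3 (with Lemma 5.1 of Meiners--Mentemeier cited for the adapted formulation), so there is no internal proof to match yours against. Your structural skeleton is nevertheless the standard and correct one: $N:=\Support\cap(\{1\}\times\OrthogonalGroup)$ is a compact normal subgroup (the kernel of the projection to $\R_>$, which is central in $\Sim$), the quotient is the abelian group $\G$, the Haar average $\bar\xi$ is again harmonic because $\mu_N$ is invariant under inner automorphisms, and classical Choquet--Deny on $\G$ forces $\bar\xi$ to be constant.

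The genuine gap is the step you yourself label the main obstacle: passing from $\bar\xi\equiv c$ to $\xi=c$ $\rho$-a.e. This is not a technical refinement of the abelian argument --- it \emph{is} the non-abelian content of the theorem --- and the mechanism you propose does not deliver it. Orthogonality of martingale increments gives $\sum_{k\ge1}\E\big[\mathrm{Var}\big(\xi(g_{k-1}X_k)\,\big|\,g_{k-1}\big)\big]\le\norm{\xi}_\infty^2<\infty$, hence only that the $k$-th term tends to $0$; but the $k$-th term integrates $V(g):=\mathrm{Var}(\xi(gX_1))$ against the law of $g_{k-1}$, i.e.\ against $\delta_{(s,u)}*\rho^{*(k-1)}$, not against $\rho$. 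Without recurrence (and in the paper's application $\sL_n\to0$ a.s.\ by \eqref{K:assumption_m}--\eqref{K:assumption_derivative_m}, so the laws $\rho^{*k}$ escape) the vanishing of late terms says nothing about the $k=1$ term, which is exactly the assertion that $\xi$ is $\rho$-a.e.\ constant. In the abelian proof this is repaired by the pathwise identity $S_k=X_kS_{k-1}$, which turns the $k$-th increment into $\xi(gX_kS_{k-1})-\xi(gS_{k-1})\to M_\infty^{gy}-M_\infty^{g}$ and thereby transports the decay back to a single $\rho$-step; precisely this exchange fails in $\Support$ because the $\OrthogonalGroup$-components do not commute. Your fallback via equidistribution on $N$-cosets and Peter--Weyl also does not close the gap for merely bounded \emph{measurable} $\xi$: weak-$*$ convergence of $\rho^{*n}$ (or its Ces\`{a}ro averages) on the compact fiber is useless against a measurable test function, and total-variation or $L^2(\mu_N)$ convergence would require $\rho$ to be spread out, which is not assumed --- the lemma must cover finitely supported $\rho$. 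As it stands the proposal reduces the lemma to a statement that still has to be proved; completing it requires the actual argument of Guivarc'h (or Jaworski's Choquet--Deny theorem for groups with relatively compact conjugacy classes, a class to which $\Sim$ belongs), i.e., the result the paper cites.
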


\begin{proof}[Source]
		A Choquet-Deny-type result for the similarity group $\Sim=\R_> \times \OrthogonalGroup$ is a consequence of \cite[Theorem 3]{Guivarch1973}. We refer the reader to \cite[Lemma 5.1]{Meiners+Mentemeier:2017} for a version with notation adopted to the present context, mutatis mutandis: In the quoted lemma, new information enters from the left, while we consider multiplication from the right.
\end{proof}

\subsection{Additive martingales}
For any $\gamma\ge 0$ such that $m(\gamma)<\infty$ we can define the {\em additive martingale} $W^{(\gamma)}:=(W_n^{(\gamma)})_{n\in\N_0}$ by
\begin{equation}\label{K:martingale_W_n_gamma}
	W_n^{(\gamma)}:=m(\gamma)^{-n}\sum_{|v|=n}L(v)^{\gamma}
\end{equation}
With the help of Lemma \ref{K:many-to-one-lemma} one can easily check, that $W^{(\gamma)}$ is a nonnegative $F$-martingale with mean one. Hence, by Doob's martingale convergence theorem, we may define $W_\infty^{(\gamma)}:=\lim_{n\rightarrow\infty}W_n^{(\gamma)}$ as its almost sure limit. 
It is well-known, that the limit $W_\infty^{(\gamma)}$ is nondegenerate (which is equivalent to the convergence of $W^{(\gamma)}$ in mean) if and only if 
	\begin{equation}\label{eq:conditions.martingale.convergence}
		\E \bigg[ W_1^{(\gamma)} \log^+  W_1^{(\gamma)}\bigg]<\infty \quad \text{ and } \quad  m(\gamma)\log m(\gamma) > \gamma  m'(\gamma), 
	\end{equation}
	see \cite{Biggins:1977}.

The convergence conditions \eqref{eq:conditions.martingale.convergence} together with $m(\gamma)<\infty$ are satisfied for all $\gamma \le \alpha$ due to our assumptions
\eqref{K:assumption_m} and \eqref{K:assumption_derivative_m}, using that $m$ is convex and continuous, hence decreasing on $[0,\alpha]$.  Note, that in case $\gamma>\alpha$, condition $m(\gamma)<\infty$, as well as the second part of \eqref{eq:conditions.martingale.convergence} in general may not hold, and thus we need to assume these two extra conditions to obtain the convergence in mean of $\mW^{(\gamma)}$.

Throughout our work we will frequently deal with $W:=W^{(\alpha)}$ for $\alpha$ given by assumption \eqref{K:assumption_m}, which for any $n\in\N$ is given by
\begin{equation}\label{K:martingale_W_n}
	W_n=\sum_{|v|=n}L(v)^{\alpha},
\end{equation}
and its a.s. limit $W_\infty$ which is nondegenerate with $\E W_\infty=1$.
It further holds  that
\begin{equation}\label{K:martingale_W_n_limit_branching_property}
	W_\infty=\sum_{\abs{v}=n}L(v)^{\alpha}[W_\infty]_v
\end{equation}
almost surely, for any $n\in\N$; see  Lemma \ref{K:multiplicative_martingale_M_limit_decomposition_lemma} below for the proof of a similar result.

\section{Main results}\label{sect:main results}

In this section we state our main results, the proofs of which will be given in subsequent sections. Equip  the space of bounded continuous complex-valued functions on $\R^3$, $\mathcal{C}_b(\R^3,\C)$ with the supremum norm $\norm{\cdot}_\infty$ and consider $E:=\{ f \in \mathcal{C}_b(\R^3,\C) \, : \, \norm{f}_\infty=1 \}$.

\subsection{Time-dependent solutions}
The general form of the time-dependent solution is given in the following theorem.

\begin{thm}\label{K:time-dependent_solution} Assume \eqref{K:assumption_uniqueness}.
	For any $\phi_0 \in E$, there is a unique function $\phi: [0, \infty) \times \R^3 \to \C$ solving equation \eqref{eq:BoltzmannFourier} with initial value $\phi_0$; with the property that for each $t \in [0,\infty)$, $\phi_t:=\phi(t, \cdot) \in E$ and the mapping  $t \mapsto \phi_t$ is continuous on $[0,\infty)$ and differentiable on $(0,\infty)$ . It is given by
	\begin{equation}\label{K:time-dependent_solution_explicit}
		\phi_t(ro e^{}_3)=\E\bigg[\prod_{v\in\partial \T_t}\phi_0(roL(v)U(v)e^{}_3)\bigg]
	\end{equation}
	for any $(r,o)\in\SpaceRO$ and all $t \ge 0$.
\end{thm}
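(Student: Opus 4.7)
The plan is to show that the branching-product formula \eqref{K:time-dependent_solution_explicit} is well-defined as a function on $\R^3$, satisfies a Duhamel-type mild integral equation, and that the latter is equivalent (after differentiation) to the PDE \eqref{eq:BoltzmannFourier}; uniqueness then follows from a Gr\"onwall argument. First, for each realisation the set $\partial\T_t$ is a.s. finite, so the random product $\prod_{v\in\partial\T_t}\phi_0(roL(v)U(v)e_3)$ has modulus at most $\norm{\phi_0}_\infty^{\abs{\partial\T_t}}\le 1$ and the expectation is well-defined and of modulus at most $1$. To see that the right-hand side depends on $(r,o)$ only through $roe_3$, I would condition on all lifetimes $(E(v))_{v\in\T}$ (so $\partial\T_t$ becomes deterministic while the $R_j(v),O_j(v)$ remain i.i.d.) and argue by induction on the generation: at depth one this is precisely \eqref{K:assumption_uniqueness}, and in the inductive step the factorisations $L(jv')=R_j(\emptyset)\cdot[L]_j(v')$ and $U(jv')=O_j(\emptyset)\cdot[U]_j(v')$ together with the independence of the shifted subtrees $[\bB]_1,[\bB]_2$ reduce the dependence on $o$ to the pair $(oO_1(\emptyset),oO_2(\emptyset))$, which by \eqref{K:assumption_uniqueness} only involves $oe_3$. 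Continuity of the resulting map on $\R^3$ then follows from continuity of $\phi_0$ by dominated convergence.

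Next, conditioning on $E(\emptyset)$ and applying the branching property (Lemma \ref{K:proposition_continuous_BRW_branching_property}), I would derive for $\phi_t$ defined by \eqref{K:time-dependent_solution_explicit} the mild equation
\begin{equation*}
 e^t\phi_t(roe_3)=\phi_0(roe_3)+\int_0^t e^s\,\E\bigl[\phi_s(rR_1 oO_1 e_3)\,\phi_s(rR_2 oO_2 e_3)\bigr]\,ds.
\end{equation*}
On $\{E(\emptyset)>t\}$ (probability $e^{-t}$) only the root is alive and contributes $\phi_0(roe_3)$; on $\{E(\emptyset)=s\le t\}$, conditional on the root-weights $(R_j,O_j)_{j=1,2}$, the population $\partial\T_t$ decomposes into the disjoint union of two independent shifted copies of $\partial\T_{t-s}$, and the conditional expectation factorises into $\phi_{t-s}(rR_1 oO_1 e_3)\phi_{t-s}(rR_2 oO_2 e_3)$; a change of variables $u=t-s$ and multiplication by $e^t$ produce the displayed identity. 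Continuity of $t\mapsto\phi_t$ in the sup-norm and differentiability in $t$ follow from this integral form together with $\norm{\phi_t}_\infty\le 1$; differentiating and recognising the right-hand side via \eqref{eq:Maxwell} gives $\partial_t\phi_t=\widehat{Q}_+(\phi_t,\phi_t)-\phi_t$.

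For uniqueness, any $\phi$ solving \eqref{eq:BoltzmannFourier} with $\phi_t\in E$ and $t\mapsto\phi_t$ continuous also satisfies the same mild equation, via $\partial_t(e^t\phi_t)=e^t\widehat{Q}_+(\phi_t,\phi_t)$ and integration. For two such solutions $\phi,\psi$ with common initial datum, bilinearity of $\widehat{Q}_+$ together with $\norm{\phi_s}_\infty,\norm{\psi_s}_\infty\le 1$ gives
\begin{equation*}
 \bigl|\widehat{Q}_+(\phi_s,\phi_s)(x)-\widehat{Q}_+(\psi_s,\psi_s)(x)\bigr|\le 2\,\norm{\phi_s-\psi_s}_\infty,
\end{equation*}
whence $\norm{\phi_t-\psi_t}_\infty\le 2\int_0^t \norm{\phi_s-\psi_s}_\infty\,ds$ and Gr\"onwall forces $\phi\equiv\psi$. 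The main obstacle I expect is the well-definedness step, because \eqref{K:assumption_uniqueness} is only a one-step invariance that must be propagated through a random subset of a random tree; the combination of conditioning on the lifetimes and induction on the generation sketched above is the cleanest route I see without invoking additional structure.
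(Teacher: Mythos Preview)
Your argument is correct and complete, but it follows a genuinely different route from the paper's. The paper decomposes \emph{forward in time}: it proves the semigroup-type identity
\[
\phi_{t+h}(roe_3)=\E\Bigl[\prod_{v\in\partial\T_h}\phi_t\bigl(roL(v)U(v)e_3\bigr)\Bigr]
\]
via the branching property, establishes continuity in $t$ by bounding $|\phi_t-\phi_s|$ through the probability that at least one split occurs in $[t,s]$, and then computes $\partial_t\phi_t$ directly by conditioning on whether zero, one, or more splits happen in $[0,h]$ and letting $h\to 0$. Uniqueness is outsourced to Picard--Lindel\"of via a reference. By contrast, you condition on the \emph{first} split time $E(\emptyset)$ to obtain the Duhamel mild equation $e^t\phi_t=\phi_0+\int_0^t e^s\,\widehat{Q}_+(\phi_s,\phi_s)\,ds$, read off continuity and differentiability from the integral form, and close with an explicit Gr\"onwall bound. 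Your approach is more ODE-flavoured and self-contained: the differentiation step is a clean application of the fundamental theorem of calculus rather than a three-case limit computation, and the uniqueness argument is spelled out rather than cited. You also treat explicitly the well-definedness of \eqref{K:time-dependent_solution_explicit} as a function of $roe_3$ (via conditioning on lifetimes and induction on the depth of $\partial\T_t$), which the paper leaves implicit. The paper's forward decomposition, on the other hand, yields the identity $\phi_{t+h}=\E\bigl[\prod_{v\in\partial\T_h}\phi_t(\cdots)\bigr]$ as a by-product, which is of independent interest for the branching-process interpretation.
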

\begin{rem}\label{K:remark_time-dependent_solution}
	If $\phi_0$ is the characteristic function of a random vector, we are unable to show in general, that the function $\phi_t:\R^3\mapsto \C$, defined by \eqref{K:time-dependent_solution_explicit}, is a characteristic function of a three-dimensional random vector for $t>0$, due to nonlinearity of underlying collisional operator $\Q$. However we can show that $\phi_t$ can be viewed as the restriction of a characteristic function of a suitable random matrix. This implies in particular the continuity of  $\phi_t$, defined by formula \eqref{K:time-dependent_solution_explicit}.

	The relation is as follows. Suppose that $\phi_0$ is the characteristic function of some vector $X=\transpose{(X_1,X_2,X_3)}$ and define a random matrix $\widetilde{X}$ as
	\begin{equation*}
		\widetilde{X}=\begin{pmatrix}
			0 & 0 & X_1\\
			0 & 0 & X_2\\
			0 & 0 & X_3\\
		\end{pmatrix}.
	\end{equation*}
	Denote by $\R^{3\times3}$ a space of $3\times3$ matrices with real entries.  Let $\psi_0:\R^{3\times3}\rightarrow \C$ be the characteristic function of $\widetilde{X}$, which by definition can be written as
	\begin{equation*}
		\psi_0(a)=\E \bigg[e^{i\tr (\transpose{a}\widetilde{X})}\bigg]
	\end{equation*}
	for any $a\in \R^{3\times3}$. Here $\tr (\cdot)$ denotes the trace of a matrix. Observe, that for any $r\ge 0$ and $o\in\OrthogonalGroup$ we have 
	\begin{equation*}
		\phi_0(roe_3)=\E\bigg[e^{i\scalar{roe_3,X}}\bigg]=\E\bigg[ e^{i\tr (r\transpose{o}\widetilde{X})}\bigg]=\psi_0(ro).
	\end{equation*}
	Applying this identity in formula \eqref{K:time-dependent_solution_explicit}, using cyclic property and linearity of the trace, we obtain
	\begin{align*}
		\phi_t(roe_3)&=\E\bigg[\prod_{v\in\partial \T_t}\phi_0(roL(v)U(v)e_3)\bigg]=\E\bigg[\prod_{v\in\partial \T_t}\psi_0(roL(v)U(v))\bigg]
		\\
		&=\E\bigg[ \exp \bigg\{i\sum_{v\in\partial \T_t}\tr (rL(v)\transpose{U(v)}\transpose{o}\widetilde{X}(v))\bigg\}\bigg]
		\\
		&=\E\bigg[ \exp \bigg\{i\sum_{v\in\partial \T_t}\tr (r\transpose{o}L(v)\widetilde{X}(v)\transpose{U(v)})\bigg\}\bigg]
		\\
		&=\E\bigg[ \exp \bigg\{i\tr (r\transpose{o}\sum_{v\in\partial \T_t}\widetilde{X}(v)\transpose{(L(v)U(v))})\bigg\}\bigg],
	\end{align*}
	where $\widetilde{X}(v)$ are independent copies of $\widetilde{X}$. The last term in the above formula can be viewed as the characteristic function of the random matrix $\sum_{v\in\partial \T_t}\widetilde{X}(v)\transpose{(L(v)U(v))}$, evaluated at the point $ro$. 
	
	This relation will be further investigated in the appendix, see Section \ref{sect:geometry}.
\end{rem}

\subsection{Stationary solutions}

In order to identify possible equilibrium states for elastic and inelastic spatial kinetic equations, in this section we discuss the solutions to the stationary equation \eqref{K:stationary_equation}.

The solutions will be described in terms of $W_\infty$ and two auxiliary random functions $V:\OrthogonalGroup\to \R_\ge$ and $Y:\OrthogonalGroup\to \R$ that are functions of the weighted branching process and satisfy the following linear stochastic equations. 
\begin{equation}\label{K:proposition_main_result_invariance_property_V}
	V(o)=\sum_{\abs{v}=n}L(v)^2[V]_v(oU(v)),\quad\as
\end{equation}
and
\begin{equation}\label{K:proposition_main_result_invariance_property_Y}
	Y(o)=\sum_{\abs{v}=n}L(v)[Y]_v(oU(v)),\quad\as,
\end{equation}
respectively. 

\begin{rem}
	While the general set of solutions to equations \eqref{K:proposition_main_result_invariance_property_V} and \eqref{K:proposition_main_result_invariance_property_Y} may depend on the structure of $\U$, we have good knowledge about the particular case of rotationally invariant solutions, \textit{i.e.}, when $V(o)\equiv V$ and $Y(o)\equiv Y$. Observing that $L(v)$ are nonnegative, we may employ the results of  \cite{Alsmeyer+Meiners:2013(9)}, in particular Theorem 4.12, from which it follows that $V=0$ a.s. unless $\alpha=2$, while $Y=0$ a.s. unless $\alpha=1$. Under an extra assumption, which is satisfied if $m(\alpha+\epsilon)<\infty$ for some $\epsilon>0$, it is proved in \cite[Theorem 4.13]{Alsmeyer+Meiners:2013(9)}, that in the case $\alpha=2$, $V=cW_\infty$ a.s. for some $c>0$, and $Y=cW_\infty$ a.s. for some $c \in \R$ in the case $\alpha=1$.  
\end{rem}

One more definition is needed before we can state our result.
	We say that a function $K:\SpaceRO \to \C$ is $(\Support,\gamma)$-invariant, if for all $(r,o) \in \SpaceRO$ it holds that 
	$$ K(r,o)=s^{-\gamma}K(rs,ou)$$
	for all $(s,u)\in\Support$.

\begin{thm}\label{K:proposition_main_result}

	Let $\phi$ be the characteristic function of a random vector in $\R^3$ and  
	assume that \eqref{K:assumption_m} -- \eqref{K:assumption_regular_variation} hold with $\alpha \in (0,2]\setminus\{1\}$. If $\alpha =2$, assume 
	in addition that $\Support=\G \times \U$.
	
	Then $\phi$ is a stationary solution to  \eqref{K:stationary_equation} if and only if  $\phi$ admits the  representation
	\begin{equation}\label{eq:structure of solutions}
		\phi(roe_3) = \E \bigg[ \exp \bigg\{ - \frac12 r^2 V(o) + irY(o) - W_\infty K(r,o) \bigg\}\bigg] \qquad \text{ for all } r\in\R_\ge, o \in \OrthogonalGroup,
	\end{equation}
	where $W_\infty$ is the limit of Biggins' martingale (see \eqref{K:martingale_W_n}),   $V$ and $Y$ are random mappings which solve equations \eqref{K:proposition_main_result_invariance_property_V} and \eqref{K:proposition_main_result_invariance_property_Y} respectively, and $K$ is an $(\Support,\alpha)$-invariant function.
	If $\alpha <1$, then $Y=0$ a.s. and $V=0$ a.s. as long as $\alpha<2$. Further, $K=0$ for $\alpha =2$.
\end{thm}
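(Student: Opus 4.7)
The plan is to prove necessity by setting up a multiplicative martingale and then identifying its limit via a Choquet--Deny argument, followed by a direct verification for sufficiency. Assuming $\phi$ satisfies \eqref{K:stationary_equation}, the stationarity combined with Theorem \ref{K:time-dependent_solution} and induction on generations yields the iteration
$$ \phi(roe_3) \;=\; \E\bigg[\prod_{|v|=n}\phi\bigl(roL(v)U(v)e_3\bigr)\bigg] \;=:\; \E[M_n(r,o)] \qquad (n\in\N_0). $$
Since $|\phi|\le 1$, the complex process $(M_n(r,o))_{n\ge 0}$ is a bounded $F$-martingale, so it converges a.s.\ and in $L^1$ to some $M_\infty(r,o)$ with $\E M_\infty(r,o)=\phi(roe_3)$. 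The task reduces to showing that $M_\infty(r,o)$ has exactly the exponential form claimed.

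To identify $M_\infty$, I would pass to a logarithmic/additive description. Because $\phi$ may vanish, I avoid $\log \phi$ directly and instead work with $1-\phi$ as a surrogate. Combining \eqref{K:assumption_regular_variation} with the Taylor expansion of a characteristic function yields, for $\alpha\in(0,2]\setminus\{1\}$, a pointwise bound of the form
$$\bigl|1-\phi(\xi)-ib\cdot\xi+\tfrac12\xi^\top\Sigma\xi\bigr|\le C|\xi|^\alpha,$$
where the drift $b$ is present only when $\alpha>1$ and the Gaussian $\Sigma$ is present only when $\alpha=2$. Using $\sup_{|v|=n}L(v)\to 0$ a.s.\ from \eqref{K:supremum_L(v)_goes_to_0}, the elementary inequality $|e^{-z}-1+z|\le|z|^2$ lets me replace $\log M_n$ with the linear sum $-\sum_{|v|=n}\bigl(1-\phi(roL(v)U(v)e_3)\bigr)$, with a remainder controlled by $\sum_{|v|=n}L(v)^{2\alpha}$; the many-to-one formula of Lemma \ref{K:many-to-one-lemma} applied with $\gamma=2\alpha$ and the strict convexity of $m$ show this remainder vanishes as $n\to\infty$.

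It then remains to organise the surviving sum. Subtracting the explicit linear term $irY_n(o):=ir\sum_{|v|=n}L(v)\,\cdot\,\langle \text{drift},\transpose{U(v)}\transpose{o}e_3\rangle$ (present when $\alpha>1$) and the quadratic term $\tfrac12 r^2 V_n(o):=\tfrac12 r^2\sum_{|v|=n}L(v)^2\,e_3^\top U(v)^\top o^\top\Sigma\, oU(v)e_3$ (present when $\alpha=2$), the remaining summand $\Lambda_n(r,o)$ is a nonnegative $F$-martingale by the fixed-point property of $\phi$. These three pieces, $V_n,Y_n$ and $\Lambda_n$, converge a.s.\ to limits $V(o)$, $Y(o)$ and $\Lambda(r,o)$, and each satisfies a branching-type functional equation inherited from \eqref{K:martingale_W_n_limit_branching_property}, proving \eqref{K:proposition_main_result_invariance_property_V} and \eqref{K:proposition_main_result_invariance_property_Y}. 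To pin down $\Lambda$, observe that the normalised limit $\Lambda(r,o)/W_\infty$ is almost surely a measurable, bounded function of $(r,o)$ that is invariant under the shift-action of the random walk $(\sL_n,\sU_n)$ via the many-to-one formula; Lemma \ref{K:Choquet-Deny-lemma} then forces this ratio to be a.s.\ constant on each $\Support$-orbit, which is precisely the $(\Support,\alpha)$-invariance relation $K(r,o)=s^{-\alpha}K(rs,ou)$ claimed for $K$. The vanishing statements ($Y=0$ when $\alpha<1$, $V=0$ when $\alpha<2$, $K=0$ when $\alpha=2$) follow because the relevant martingale equation has only the trivial solution: at parameter $\gamma\ne\alpha$, $m(\gamma)\ne1$ while $W_n^{(\gamma)}/m(\gamma)^n$ is a martingale, so a nonzero bounded-mean solution of the corresponding equation cannot exist. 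For the special role of $\alpha=2$, the standing assumption $\Support=\G\times\U$ allows me to decouple the $o$-dependence of $V$ from the scaling and apply \cite[Theorem 4.13]{Alsmeyer+Meiners:2013(9)} componentwise.

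Sufficiency is a direct check: for $\phi$ of the claimed form, one uses the branching decomposition of $W_\infty$ in \eqref{K:martingale_W_n_limit_branching_property}, the linear equations \eqref{K:proposition_main_result_invariance_property_V}--\eqref{K:proposition_main_result_invariance_property_Y}, and the $(\Support,\alpha)$-invariance of $K$ to verify $\phi(roe_3)=\E\prod_{|v|=n}\phi(roL(v)U(v)e_3)$ at level $n=1$; combined with independence of shifted branching processes, this yields \eqref{K:stationary_equation}. The \emph{main obstacle} is the log-versus-surrogate replacement together with the identification step: handling potential zeros of $\phi$ forces me to work with $1-\phi$, carefully track real and imaginary parts of the error, and extract enough regularity to apply the Choquet--Deny lemma in the non-commutative group $\Sim$ -- particularly delicate at the boundary $\alpha=2$, where the Gaussian correction must be separated cleanly from the stable-type piece.
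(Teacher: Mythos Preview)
Your high-level architecture (multiplicative martingale $M_n$, pass to the limit, Choquet--Deny identification, then direct sufficiency check) matches the paper. However, the implementation you propose diverges from the paper at the key technical step, and the version you describe has a genuine gap.

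The paper does \emph{not} linearise via $1-\phi$. Instead, its central device is Proposition~\ref{prop:SimultaneousConvergence}: for each fixed $o$, the limit $r\mapsto M_\infty(r,o)$ is the characteristic function of an \emph{infinitely divisible} law (obtained as a row-sum limit in a triangular null array, using $\sup_{|v|=n}L(v)\to 0$). This gives a \emph{canonical} decomposition $\log M_\infty(\cdot,o)=\Psi^o(\cdot)$ into a random L\'evy triplet $(\mu^o,\Theta^o,\nu^o)$, and the branching identity \eqref{K:multiplicative_martingale_M_limit_decomposition} then yields separate equations \eqref{K:levy_triplet_theta_branching_property}, \eqref{K:levy_triplet_nu_branching_property} for each component. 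Choquet--Deny (Lemma~\ref{K:Choquet-Deny-lemma}) is applied not to ``$\Lambda/W_\infty$'' but to the \emph{deterministic} function $(r,o)\mapsto r^{-\alpha}\E[\nu^o((1/r,\infty))]$, whose boundedness is secured beforehand by Lemma~\ref{K:levy_triplet_nu_boundedness_lemma} via the tail assumption~\eqref{K:assumption_regular_variation}. The vanishing of $V$ for $\alpha<2$ and of $Y$ for $\alpha<1$ is then a consequence of the uniform bounds on $\Theta^o$ and $\mu^o$ (Lemmata~\ref{K:proposition_uniform_boundedness_of_theta_o}, \ref{K:proposition_uniform_boundedness_of_mu_o}), not of a martingale non-existence argument.

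Your route breaks at the decomposition step. Once you subtract the drift piece $irY_n(o)=ir\sum_{|v|=n}L(v)\langle b,\transpose{U(v)}\transpose{o}e_3\rangle$ and the quadratic piece $\tfrac12 r^2 V_n(o)$ from the surrogate $-\sum_{|v|=n}(1-\phi)$, the three resulting pieces are \emph{not} $F$-martingales separately: $Y_n$ is a martingale only if $\E\big[\sum_j R_j O_j^\top\big]$ acts trivially on $e_3$, and $V_n$ only if $m(2)=1$, neither of which is assumed for general $\alpha$. Only the full object is a martingale, so there is no mechanism that guarantees the separate convergence of $Y_n$, $V_n$, $\Lambda_n$. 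Moreover, your $\Lambda_n$ is complex-valued (it carries the imaginary stable part that eventually becomes $K_2$), so it cannot be the ``nonnegative martingale'' you claim. Finally, applying Lemma~\ref{K:Choquet-Deny-lemma} to $\Lambda/W_\infty$ requires a priori boundedness of this ratio, which you have not established; in the paper this is precisely what the L\'evy-measure bound of Lemma~\ref{K:levy_triplet_nu_boundedness_lemma} provides, and that bound relies on the infinitely-divisible structure you bypassed. The Taylor bound you invoke from~\eqref{K:assumption_regular_variation} also needs justification: \eqref{K:assumption_regular_variation} is only an upper tail bound, and for $\alpha\in(1,2)$ the remainder estimate $|1-\phi(\xi)-ib\cdot\xi|\le C|\xi|^\alpha$ does not follow from it without further argument. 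In short, the infinite-divisibility route is not merely one option among several here---it is what supplies both the canonical splitting of $\log M_\infty$ and the boundedness needed for Choquet--Deny.
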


In \cite[Theorem 2.3]{Bassetti+Ladelli+Matthes:2015}, stationary solutions of \eqref{K:stationary_equation} in the class of characteristic functions of probability measures in the normal domain of attraction of some full $\alpha$-stable distribution on $\R^3$ were studied, assuming in addition the existence of a density for $\U$ (c.f. assumption (H3) in \cite{Bassetti+Ladelli+Matthes:2015}) and that the measures are centered for $\alpha>1$. With our result, we can remove these restrictions: We do not require a density assumption, we do not require the measures to be centered, and, in particular, we can determine all solutions (not necessarily in the domain of attraction of some $\alpha$-stable law) that satisfy the weak tail assumption \eqref{K:assumption_regular_variation}. Of course, this assumption is satisfied by measures in the $\alpha$-stable domain of attraction, but also by any probability measure that has a finite moment of order $\alpha$. 

The structure of the set of solutions is consistent with previous studies on linear fix-point equations, \textit{i.e.} \eqref{eq:homogeneous smoothing}: If we restrict our attention to rotationally invariant solutions, then the equation \eqref{K:stationary_equation} evaluated for the radial part, can be written in the form \eqref{eq:homogeneous smoothing}, with  random vectors $X \in \R^3$ and nonnegative scalar weights $T_j$. The set of solutions is then described by \cite[Theorem 2.1]{Alsmeyer+Meiners:2013(9)}.

\begin{rem}
	Previous work on fixed points of smoothing equations \cite{Meiners+Mentemeier:2017} indicates that the case $\alpha=1$ will require stronger assumptions (finite support for $\rho$ or the existence of a density); this is why we refer in this case to \cite{Bassetti+Ladelli+Matthes:2015}, where for $\alpha=1$ the existence and uniqueness of solutions in the domain of attraction of a Cauchy distribution was shown under a density assumption.
\end{rem} 

\begin{rem}
	The precise additional condition required for the case $\alpha=2$ reads
	\begin{equation}\label{K:assumption_support}\tag{A5}
		\text{There exists }u\in\U\text{ and }s\in\G,\ s \neq 1\text{, such that }(1,u)\in\Support\text{ and }(s,u)\in\Support. 
	\end{equation}
	It will be employed in the final part of the proof of Theorem \ref{K:proposition_main_result} in Section \ref{subsection:proofMainresult}.
\end{rem}

\section{Proofs for the time-dependent case} \label{sect:proofs time dependent}
\begin{proof}[Proof of the Theorem \ref{K:time-dependent_solution}]
	The proof is organized as follows. In Step 1 for $\phi_t$, defined by \eqref{K:time-dependent_solution_explicit}, we derive the representation in terms of a branching random walk $\mZ$, and, using branching property \eqref{K:continuous_BRW_branching_property}, investigate the relation between $\phi_{t+h}$ and $\phi_t$ for any $t,h\ge 0$.  In Step 2 we show, that the mapping $t \mapsto \phi_t(\cdot)$ is continuous. After that we proceed to calculate its derivative w.r.t. to $t$ and explicitly show, that $\phi_t$ is indeed the time-dependent solution to \eqref{eq:BoltzmannFourier}. This will be accomplished in Step 3.

	\textbf{Step 1}: 
	In view of \eqref{K:continuous_BRW}, we may rewrite the right side of \eqref{K:time-dependent_solution_explicit} in terms of $\mZ$ as follows:
	\begin{align*}
		\E\bigg[&\prod_{v\in\partial\T_t}\phi_0(roL(v)U(v)e^{}_3)\bigg]
		=\E\bigg[\exp\log\prod_{v\in\partial\T_t}\phi_0(roL(v)U(v)e^{}_3)\bigg]\\	
		&=\E\bigg[\exp\sum_{v\in\partial\T_t}\log \phi_0(roL(v)U(v)e^{}_3)\bigg]=
		\E\bigg[\exp\int_{\SpaceRO}\log \phi_0(ro y u e^{}_3)\d \mZ_t(y,u)\bigg].
	\end{align*}
	From this, we may in particular infer (using dominated convergence) that $\phi_t \in E$ for all $t \ge 0$.
	
	Now fix any $t,h\ge 0$. Using the branching property \eqref{K:continuous_BRW_branching_property}, we have
	\begin{align}\label{K:proof_proposition_time-dependent_solution_phi_t+h}
		\nonumber\phi_{t+h}(ro e^{}_3)&=\E\bigg(\E\bigg[\exp\int_{}\log \phi_0(ro yu e^{}_3)\d\mZ_{t+h}(y,u)\bigg|\mF_h\bigg]\bigg) \\
		\nonumber&=\E\bigg(\E\bigg[\exp\int_{}\log \phi_0(ro yu e^{}_3)\d\sum_{v\in\partial\T_h}[\mZ_t]_v(y/L(v),(U(v))^{-1}u)\bigg|\mF_h\bigg]\bigg) \\
		\nonumber&=\E\bigg(\E\bigg[\exp\int_{}\sum_{v\in\partial\T_h}\log \phi_0(ro L(v)yU(v)ue^{}_3)\d[\mZ_t]_v(y,u)\bigg|\mF_h\bigg]\bigg) \\
		\nonumber&=\E\bigg(\E\bigg[\prod_{v\in\partial\T_h}\exp\int_{}\log \phi_0(ro L(v)U(v) yu e^{}_3)\d[\mZ_t]_v(y,u)\bigg|\mF_h\bigg]\bigg) \\
		\nonumber&=\E\bigg(\prod_{v\in\partial\T_h}\E\bigg[\exp\int_{}\log \phi_0(ro L(v)U(v) yu e^{}_3)\d[\mZ_t]_v(y,u)\bigg]\bigg) \\
		&=\E\bigg[\prod_{v\in\partial\T_h} \phi_t(ro L(v)U(v)e^{}_3)\bigg].
	\end{align}
	
	\textbf{Step 2}: 
	We now show, that the mapping $t\mapsto \phi_t(\cdot)$ is continuous. 
	Since $\abs{\phi_0(\xi)}\leq 1$ for all $\xi\in\R^3$, for any $t\ge 0$ and $roe_3\in\R^3$ we have
	\begin{equation}\label{K:proof_time-dependent_bounded_by_1}
		\abs{\phi_t(roe_3)}\leq \E\bigg[\prod_{v\in\partial \T_t}\abs{\phi_0(roL(v)U(v)e_3)}\bigg]\leq 1.
	\end{equation}
	For any $t,s$ such that $0\leq t<s$, the number of splits for each particle in $\mZ_t$  during time $[t,s]$ has the Poisson distribution with parameter $s-t$. Since particles in $\mZ_t$ reproduce independently, using \eqref{K:proof_time-dependent_bounded_by_1}, for any $ro e^{}_3\in\R^3$ we have
	\begin{align*}
		\abs{\phi_t(ro e^{}_3)-\phi_s(ro e^{}_3)}&\leq 2\P\{\text{there is at least one split in $(\mZ_t)_{t\ge 0}$ during time [t,s]}\}\\
		&=2\E \big[1-(e^{-(s-t)})^{|\partial\T_t|}\big] \rightarrow 0,
	\end{align*}
	as $s\rightarrow t$, since $|\partial\T_t|$, e.g. the number of alive particles at moment $t$, is finite almost surely for any $t\ge 0$. Since $roe_3$ was arbitrary, we conclude that $\norm{\phi_t - \phi_s}_\infty \to 0$ as $s \to t$.

	\textbf{Step 3}: 
	Having established the continuity of $t\mapsto\phi_t(\cdot)$, we now calculate its derivative. To do that, we first take a closer look on the relation \eqref{K:proof_proposition_time-dependent_solution_phi_t+h}. Consider the amount of splits in branching random walk $\mZ$ on the interval $[0,h]$. Since at moment zero there is only one living particle and its lifetime is exponentially distributed, we have
	\begin{align*}
		&\P\{\text{there are no splits during $[0,h]$}\}=e^{-h}
		\\
		&\P\{\text{there is one split during $[0,h]$}\}=h
		\\
		&\P\{\text{there are more than one splits during $[0,h]$}\}=1-e^{-h}-h.
	\end{align*}
	Conditioning on each of these three events, we may rewrite the formula \eqref{K:proof_proposition_time-dependent_solution_phi_t+h} as
	\begin{align*}
		\phi_{t+h}(ro e^{}_3)
			&=\phi_t(ro e^{}_3)e^{-h}+\Q(\phi_t,\phi_t)(ro e^{}_3)h + \E\bigg[\prod_{v\in\partial\T_h} \phi_t(ro L(v)U(v)e^{}_3)(1-e^{-h}-h)\bigg].
	\end{align*}
	We now have all the ingridients to calculate the derivative of $t \mapsto \phi_t(roe_3)$, at any fixed $roe_3 \in \R^3$:
	\begin{align*}
		&\frac{\partial}{\partial t}\phi_t(ro e^{}_3)
		\\
		&=\lim_{h\rightarrow0}\frac{\phi_{t+h}(ro e^{}_3)-\phi_t(ro e^{}_3)}{h} 
		\\
		&=\lim_{h\rightarrow0}\frac{\phi_t(ro e^{}_3)(e^{-h}-1)+\Q(\phi_t,\phi_t)(ro e^{}_3)h + \E\bigg[\prod\limits_{v\in\partial\T_h} \phi_t(ro L(v)U(v)e^{}_3)\bigg](1-e^{-h}-h)}{h} 
		\\
		&=\lim_{h\rightarrow0} \frac{(e^{-h}-1)}{h}\phi_t(ro e^{}_3) +\Q(\phi_t,\phi_t)(ro e^{}_3)+\lim_{h\rightarrow0}\frac{1-e^{-h}-h}{h}\E\bigg[\prod\limits_{v\in\partial\T_h} \phi_t(ro L(v)U(v)e^{}_3)\bigg]
		\\
		&= -\phi_t(ro e^{}_3) + \Q(\phi_t,\phi_t)(ro e^{}_3).
	\end{align*}
	Thus, $\phi_t$ solves the equation \eqref{eq:BoltzmannFourier}. Its uniqueness follows from the Picard-Lindel\"of theorem, see e.g. \cite[Proposition 2.2]{Bassetti+Ladelli+Matthes:2015}.
\end{proof}

\section{Proofs for the stationary case}\label{sect:proofs stationary}

The proof of Theorem \ref{K:proposition_main_result} is much more involved and requires a couple of intermediary steps. The proof of the theorem will be given at the end of this section and relies on Lemmas \ref{K:proposition_solution_via_multiplicative_martingale_M}, \ref{K:proposition_levy_triplet_nu_characteristics}, \ref{K:proposition_uniform_boundedness_of_gamma_o}, \ref{K:proposition_uniform_boundedness_of_mu_o} and \ref{K:proposition_uniform_boundedness_of_theta_o}.  As in previous works on branching fixed point equations (see \cite{Meiners+Mentemeier:2017} and references therein), we begin with introducing a multiplicative martingale, which will be studied in the first part of this section. 

\subsection{Multiplicative martingales and random L\'evy-Khintchine exponents}\label{K:sec:multiplicative_martingales_and_random_levy-khinchine_exponents}

{It will be stipulated throughout this section that $\phi$ is a characteristic function of an $\R^3$-valued random variable satisfying \eqref{K:stationary_equation}; and that assumptions \eqref{K:assumption_m} -- \eqref{K:assumption_regular_variation} are in force.}

 For $r\in\Rplus$ and $o\in\OrthogonalGroup$ we define a stochastic process $M=(M_n(r,o))_{n\in\N_0}$ as
\begin{equation}\label{K:multiplicative_martingale_M}
	M_n(r,o):=\prod_{|v|=n}\phi(ro L(v)U(v)e^{}_3).
\end{equation}
We suppress the dependence of $M$ on $\phi$ in this notation. 
Since $\phi$ satisfies \eqref{K:stationary_equation}, for any $n\in\N$ we have
\begin{align*}
	\E \bigg[M_{n+1}(r,o)|F_n\bigg]&= \E\bigg[\prod_{|v|=n+1}\phi(ro L(v)U(v)e^{}_3)\big|F_n\bigg]\\
	&=\E\bigg[\prod_{|v|=n}\prod_{j=1,2}\phi(ro L(v)U(v)R_j(v)O_j(v)e^{}_3)\big|F_n\bigg]\\
	&=\prod_{|v|=n}\E\bigg[\phi(ro L(v)U(v)e^{}_3)\big|F_n\bigg]=M_n(r,o).
\end{align*}
Hence, for each fixed $r \in \Rplus$, $o \in \OrthogonalGroup$
\begin{equation*}
	M_n(r,o)=\prod_{|v|=n}\phi(ro L(v)U(v)e^{}_3)
\end{equation*}
is a bounded $L^1$-martingale. Thus its a.s. limit, which we denote by $M_\infty(r,o)$, satisfies
\begin{equation}\label{eq:Disintegration}
	\phi(ro e^{}_3)=\E[ M_\infty(r,o)].
\end{equation}
The above identity \eqref{eq:Disintegration} is the main reason to study the martingale $M$, for it will allow us to determine the structure of $\phi$.
The basis of our analysis will be the following Proposition \ref{prop:SimultaneousConvergence} stating that almost surely, for each fixed $o \in \OrthogonalGroup$, $r\mapsto M_\infty(r,o)$ is the one-dimensional characteristic function of an infinitely divisible random variable. This technique goes back to \cite[Theorem 1]{Caliebe2003}. Note, that for this claim we will extend the definition of $M$ in \eqref{K:multiplicative_martingale_M} for all $r\in\R$. Observe that, in contrast e.g. to \cite[Lemma 3.1]{Meiners+Mentemeier:2017}, the particular structure of $\Q$ does not allow us to obtain that $M_\infty$ is a three-dimensional characteristic function. Instead we obtain, that along each fixed direction $o e_3$ it is a one-dimensional characteristic function of an infinitely divisible law. Note, that there is in general no Cram\'er-Wold device for infinite divisibility, see e.g. the counterexample given in \cite{Dwass1957}.

For the following considerations denote
\begin{equation}\label{K:formula:bold_L}
	\mathbf{L}:=(L(v),U(v) : j\in\{1,2\})_{v\in\T}.
\end{equation}
The Doob martingale convergence theorem yields that for each fixed pair $(r,o)$, there is a set $N_{r,o} \subset (\SpaceRO)^{\T}$ with $\P(\mathbf{L} \in N_{r,o}^c=1)$ such that for all $\mathbf{l}=(l(v),u(v) : j\in\{1,2\})_{v\in\T} \in N_{r,o}^c$  we have the convergence
\begin{equation}\label{K:formula:M_n(r,o,l)_converges_to_M_infinity(r,o,l)}
	M_n(r,o,\mathbf{l}) =\prod_{|v|=n}\phi(ro l(v)u(v)e^{}_3) \to M_\infty(r,o,\mathbf{l}).
\end{equation}

Our aim is to obtain a \emph{global} null set $N$, such that we have simultaneous convergence for all $(r,o)$ whenever $\mathbf{l} \in N^c$. Namely, we are going to prove the following result.

\begin{prop}\label{prop:SimultaneousConvergence} 
	Assume \eqref{K:assumption_m} -- \eqref{K:assumption_regular_variation}.  There is a set $N$ with $\P(\mathbf{L} \ \in N)=0$, such that for all $\mathbf{l} \in N^c$ it holds
	$$ M_n(r,o,\mathbf{l}) \to M_\infty(r,o,\mathbf{l})$$
	simultaneously for all $r \in \R$ and $o \in \OrthogonalGroup$. The mapping $(r,o) \mapsto M_\infty(r,o,\mathbf{l})$ is continuous.
	Moreover, for each fixed $o \in \OrthogonalGroup$, the mapping $r \mapsto M_\infty(r,o,\mathbf{l})$ is the characteristic function of an infinitely divisible random variable.
\end{prop}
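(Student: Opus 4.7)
I would follow the strategy of \cite{Caliebe2003}, indicated in the text. The proof splits into three main steps.

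\emph{Step 1 (Pointwise convergence on a countable dense set).} For each fixed $(r,o)$, the calculation preceding \eqref{eq:Disintegration} shows that $n\mapsto M_n(r,o)$ is a $\C$-valued $F_n$-martingale with $|M_n(r,o)|\le 1$, hence it converges a.s.\ to $M_\infty(r,o)$ by Doob. Choosing a countable dense subset $D\subset\R\times\OrthogonalGroup$ and intersecting the countable collection of full-measure events, one obtains a null set $N_0$ off which $M_n(r,o,\mathbf{L})\to M_\infty(r,o,\mathbf{L})$ for every $(r,o)\in D$.

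\emph{Step 2 (Equicontinuity, the main step).} I would prove that, off a null set $N\supseteq N_0$, the family $\{M_n(\cdot,\cdot,\mathbf{L})\}_n$ is equicontinuous on compact subsets of $\R\times\OrthogonalGroup$. The starting bound is
\[
\bigl|M_n(r,o)-M_n(r',o')\bigr|\le\sum_{|v|=n}\bigl|\phi\bigl(rL(v)oU(v)e_3\bigr)-\phi\bigl(r'L(v)o'U(v)e_3\bigr)\bigr|.
\]
Combined with $|\phi(x)-\phi(y)|\le\E\min(\|x-y\|\|X\|,2)$ and the tail bound \eqref{K:assumption_regular_variation}, a standard truncation argument yields, for $\alpha\in(0,1)$, a H\"older estimate $|\phi(x)-\phi(y)|\le C\|x-y\|^\alpha$; the $v$-sum then reduces to the bounded additive martingale $W_n^{(\alpha)}=W_n$ of \eqref{K:martingale_W_n}. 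For $\alpha\in[1,2]$ one centres: when $\alpha>1$ the mean $\mu:=\E X$ is finite, and one uses $|\phi(x)-1-i\langle x,\mu\rangle|\le C\|x\|^\alpha$, treating the linear correction $\sum_{|v|=n}L(v)oU(v)e_3\cdot\mu$ separately (this is essentially the precursor of the $Y$-process of \eqref{K:proposition_main_result_invariance_property_Y}). Once uniform equicontinuity is established, combining with Step 1 via an Arzel\`a--Ascoli argument yields uniform convergence on compacts off $N$, whence $M_\infty(\cdot,\cdot,\mathbf{L})$ is continuous.

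\emph{Step 3 (The limit is an infinitely divisible characteristic function).} Fix $o\in\OrthogonalGroup$. Then $M_n(\cdot,o)$ is, given $F_n$, the characteristic function of $T_n(o):=\sum_{|v|=n}\langle oL(v)U(v)e_3,X_v\rangle$, where $X_v$ are i.i.d.\ copies of $X$ independent of $F_n$. Since $M_n(\cdot,o)\to M_\infty(\cdot,o)$ uniformly on compacts (Step 2) with $M_\infty(0,o)=1$, L\'evy's continuity theorem identifies $M_\infty(\cdot,o)$ as the characteristic function of a probability measure $\mu_o$. Infinite divisibility of $\mu_o$ then follows from classical triangular-array theory: the row-$n$ summands in $T_n(o)$ are $2^n$ conditionally independent random variables whose magnitudes are dominated by $L(v)\|X\|$; by \eqref{K:supremum_L(v)_goes_to_0} we have $\sup_{|v|=n}L(v)\to 0$ a.s., so the uniformly asymptotically negligible condition holds, forcing $\mu_o$ to be infinitely divisible (see e.g.\ \cite{Gnedenko1954}).

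The hard part will be Step 2 in the range $\alpha\in(1,2]$: the tail bound alone does not yield $|1-\phi(s)|\le C\|s\|^\alpha$, and the linear sum $\sum_{|v|=n}L(v)oU(v)e_3$ does not converge absolutely, since $\sum_{|v|=n}L(v)=m(1)^n W_n^{(1)}$ diverges whenever $m(1)>1$, which is the case when $\alpha>1$ (by convexity of $m$ together with $m(\alpha)=1$ and \eqref{K:assumption_derivative_m}). Cancellations in the directional structure, arising from the stationary equation, must be exploited --- essentially the same mechanism that later produces the $Y$-component of the L\'evy--Khintchine triplet.
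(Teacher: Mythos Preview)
Your overall architecture (countable dense set, equicontinuity, Arzel\`a--Ascoli, then triangular-array infinite divisibility) matches the paper, and Steps~1 and~3, as well as Step~2 for $\alpha\in(0,1)$, are correct. The genuine gap is Step~2 for $\alpha\in[1,2]$, which you flag but do not close. Telescoping $|M_n(r,o)-M_n(r',o')|\le\sum_{|v|=n}|\phi(x_v)-\phi(y_v)|$ and then centring produces a linear remainder governed by the vector $\sum_{|v|=n}L(v)U(v)e_3$, and no a~priori bound on this object is available: the $Y$-process of~\eqref{K:proposition_main_result_invariance_property_Y} is a limit built only \emph{after} the present proposition, and the stationary equation by itself gives no uniform-in-$n$ control on such partial sums. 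For $\alpha=1$ the situation is worse still, since under~\eqref{K:assumption_regular_variation} alone $\E\norm{X}$ need not be finite, so even the centring step is unavailable.

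The paper bypasses the difficulty by abandoning the term-by-term telescoping bound. Writing
\[
M_n(r,o,\mathbf{l})=\E\Bigl[\exp\Bigl(i\sum_{|v|=n}r\,l(v)\bigl\langle u(v)e_3,\,o^\top X(v)\bigr\rangle\Bigr)\Bigr]
\]
(expectation over the i.i.d.\ copies $X(v)$ only), one obtains
\[
\bigl|M_n(r,o,\mathbf{l})-M_n(r,o',\mathbf{l})\bigr|\;\le\;2\,\E\Bigl[\Bigl(\norm{o-o'}\sum_{|v|=n}|r|\,l(v)\,\norm{X(v)}\Bigr)\wedge 1\Bigr],
\]
with the \emph{entire sum} retained inside the truncation. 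Splitting on the event $\bigl\{\sum_{|v|=n}|r|\,l(v)\,\norm{X(v)}>K\bigr\}$, the bad event has probability bounded uniformly in $n$ by a one-big-jump estimate (Lemma~\ref{lem:regvar}, using only~\eqref{K:assumption_regular_variation} and $\sup_n W_n(\mathbf{l})<\infty$), while on the complement the bound is $2K\norm{o-o'}$. This yields equicontinuity for all $\alpha\in(0,2]$ at once, with no centring and no case distinction.
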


The proof of the proposition will consist of several steps. 
As the first step, we want to show that for each fixed $o \in \OrthogonalGroup$, the convergence holds for all $r \in \R$, and the limit is a characteristic function. 

We follow the approach by \cite{Caliebe2003}. Fix $o \in \OrthogonalGroup$. If $X$ is a random vector with characteristic function $\phi$, then we may consider 
$$r \mapsto \begin{cases}
	\phi(roe_3) & r \ge 0,\\
	\phi\bigg(|r|(-o)e_3\bigg) & r <0
\end{cases}$$ as the characteristic function of $\scalar{e_3,\transpose{o} X}$. The stationary equation
\begin{equation}\label{distributional equation 1D}
	\phi(roe_3) = \E \bigg[\prod_{|v|=1} \phi(rL(v)oU(v))e_3\bigg]
\end{equation}
then corresponds to the following distributional equation
$$ \scalar{e_3, \transpose{o} X} \stackrel{d}{=} \sum_{|v|=1} L(v)\scalar{U(v)e_3,o^\top X(v)},$$
where $\stackrel{d}{=}$ means that both sides have the same law, and $X(v)$ are i.i.d.\ copies of $X$, which are also independent of $\{ L(v), U(v) \, : \, |v|=1\}$.

\begin{lem}
	For each $u \in \R$, we have that $(G_n(u))_{n \in \N}$, defined by
	$$ G_n(u) := \P \bigg( \sum_{|v|=n} L(v) \scalar{U(v)e_3,o^\top X(v)} \le u \, \bigg| \, F_n \bigg), $$
	is an $F$-martingale.	
\end{lem}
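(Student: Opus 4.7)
The plan is to verify the martingale property directly, i.e., $\E[G_{n+1}(u) \mid F_n] = G_n(u)$; that $G_n(u)$ is $F_n$-measurable is immediate from its definition as a conditional probability given $F_n$. First, by the tower property,
$$\E[G_{n+1}(u) \mid F_n] = \P\bigg(\sum_{|v|=n+1} L(v)\scalar{U(v)e_3,\, o^\top X(v)} \le u \,\bigg|\, F_n\bigg).$$
Using the recursions $L(wj)=L(w)R_j(w)$, $U(wj)=U(w)O_j(w)$ and the identity $\scalar{U(w)O_j(w)e_3,\, o^\top X(wj)} = \scalar{O_j(w)e_3,\, (oU(w))^\top X(wj)}$, I would regroup the sum over generation $n+1$ by ancestors at generation $n$:
$$\sum_{|v|=n+1} L(v)\scalar{U(v)e_3,\, o^\top X(v)} = \sum_{|w|=n} L(w)\, Y_w,$$
with $Y_w := \sum_{j=1,2} R_j(w)\scalar{O_j(w)e_3,\, (oU(w))^\top X(wj)}$.

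Next, conditional on $F_n$, the tuples $\bigl((R_j(w),O_j(w))_{j=1,2},\, X(w1),\, X(w2)\bigr)$ are independent across nodes $w$ with $|w|=n$ (the branching increments of generation $n+1$ and the fresh i.i.d. copies $X(wj)$ are all independent of $F_n$ and of one another for different $w$), so the family $(Y_w)_{|w|=n}$ is conditionally independent. The decisive step is to identify the conditional distribution of each $Y_w$: applying the distributional equation \eqref{distributional equation 1D} with the $F_n$-measurable rotation $oU(w)$ in place of $o$ yields
$$\scalar{U(w)e_3,\, o^\top X} = \scalar{e_3,\, (oU(w))^\top X} \stackrel{d}{=} \sum_{|v|=1} L(v)\scalar{U(v)e_3,\, (oU(w))^\top X(v)},$$
and the right-hand side has, conditional on $F_n$, exactly the distribution of $Y_w$. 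Combining the conditional independence across $w$ with this per-node distributional identity yields
$$\sum_{|w|=n} L(w)\, Y_w \,\stackrel{d}{=}\, \sum_{|w|=n} L(w)\scalar{U(w)e_3,\, o^\top X(w)} \quad \text{conditionally on } F_n,$$
with fresh i.i.d.\ copies $X(w)$ drawn for each $w$ on the right. Taking probabilities of the event $\{\,\cdot\, \le u\}$ produces exactly $G_n(u)$, proving the martingale identity.

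The main obstacle is bookkeeping rather than a genuine difficulty: one must legitimately apply the one-dimensional distributional identity \eqref{distributional equation 1D} for each $w$ with a rotation $oU(w)$ that is random but $F_n$-measurable, which is justified by disintegration on $F_n$ and the fact that \eqref{distributional equation 1D} holds for \emph{every} $o\in\OrthogonalGroup$. The independence of the generation-$(n+1)$ branching data and of the i.i.d.\ copies $X(wj)$ from $F_n$, together with their mutual independence across distinct $w$, is exactly what converts the per-node identity into a joint conditional identity, after which the martingale equality is immediate.
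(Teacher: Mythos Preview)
Your proof is correct and follows essentially the same approach as the paper: both use the tower property, regroup the generation-$(n+1)$ sum by ancestors at generation $n$, and then invoke the distributional identity \eqref{distributional equation 1D} at each node $w$ with the $F_n$-measurable rotation $oU(w)$. The paper compresses your explicit $Y_w$ and the conditional-independence bookkeeping into a single line using the shifted-tree notation $[L(v)]_w$, $[U(v)]_w$, $[X(v)]_w$, but the substance is identical.
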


\begin{proof}
	By definition, $G_n(u)$ is adapted to the filtration $F$, and integrable.
	Considering the martingale property, we use the tower property of conditional expectation and then \eqref{distributional equation 1D}:
	\begin{align*}
		\E \bigg( G_{n+1}(u) \, \big| \, F_n \bigg) =& \P \bigg( \sum_{|w|=n} \sum_{|v|=1} L(w) [L(v)]_w \scalar{U(w) [U(v)]_w e_3,o^\top [X(v)]_w} \le u \, \bigg| \, F_n \bigg) \\
		=& \P \bigg( \sum_{|w|=n}  L(w)  \scalar{U(w)  e_3,o^\top X(w)} \le u \, \bigg| \, F_n \bigg) = G_n(u) \quad \as
	\end{align*}
\end{proof}

\begin{lem}\label{lem:Minfintelydivisible}
	For each $o \in \OrthogonalGroup$, there is a null set $N_o$ such that for all $\mathbf{l} \in N_o^c$, the sequence of mappings $ r \mapsto M_n(r,o,\mathbf{l})$ converges to the characteristic function of an infinitely divisible random variable $Y^o(\mathbf{l})$. Hence, on $N_o^c$, there is a version of $M_\infty(r,o, \mathbf{l})$ such that $ r \mapsto M_\infty(r,o,\mathbf{l})$ is the characteristic function of an infinitely divisible random variable.
\end{lem}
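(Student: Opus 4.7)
The plan is to interpret $M_n(r,o)$ as the conditional characteristic function, given $F_n$, of
\[ S_n := \sum_{|v|=n} L(v)\, \scalar{U(v) e_3,\, \transpose{o} X(v)}, \]
where $(X(v))_{v\in\T}$ are i.i.d.\ copies of $X$, independent of $F_\infty$. Equivalently, the martingale $G_n(\cdot)$ from the preceding lemma is the conditional distribution function of $S_n$ given $F_n$. Following the Caliebe-type approach, the task is to produce, off a single null set, a limiting conditional law whose characteristic function yields a version of $M_\infty(\cdot,o,\mathbf{l})$ that is continuous in $r$ and infinitely divisible.

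First I would fix $o$, choose countable dense sets $D, D' \subset \R$, and apply bounded martingale convergence to each of the bounded complex martingales $M_n(r,o)$ with $r \in D$ and to each of the bounded real martingales $G_n(u)$ with $u \in D'$. The countable union of the resulting exceptional sets gives a single null set $N_o^{(1)}$ such that for $\mathbf{l} \in (N_o^{(1)})^c$, convergence holds simultaneously along $D$ and $D'$; a right-continuous, nondecreasing extension of $u \mapsto G_\infty(u,\mathbf{l})$ then yields a candidate subprobability distribution function on $\R$.

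The main step is to establish conditional tightness of $\mathcal{L}(S_n \mid F_n)$, i.e.\ to rule out mass escaping to $\pm\infty$ in the limit. By \eqref{K:assumption_regular_variation}, each summand satisfies $\P(|\scalar{U(v) e_3, \transpose{o} X(v)}| > t) \le C t^{-\alpha}$, so a union bound yields
\[ \P\bigg(\max_{|v|=n}\bigabs{L(v)\scalar{U(v) e_3, \transpose{o} X(v)}} > K \,\bigg|\, F_n\bigg) \le C\, W_n\, K^{-\alpha}, \]
which controls the extreme summands uniformly once $W_n$ is close to its a.s.\ limit $W_\infty$. The truncated contribution then has to be controlled by a conditional second-moment computation combined with \eqref{K:assumption_regular_variation}. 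I expect this to be the main technical obstacle, since only absolute moments strictly below order $\alpha$ are available and $\alpha$ may be arbitrarily small; this is precisely where \eqref{K:assumption_regular_variation} enters in its sharp form.

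Granting tightness, L\'evy's continuity theorem together with the subsequence principle (using the pointwise convergence of characteristic functions on $D$ to identify the limit) implies that $\mathcal{L}(S_n \mid F_n)$ converges weakly to a probability law $\mu^o(\cdot,\mathbf{l})$ whose characteristic function agrees on $D$ with $M_\infty(\cdot, o, \mathbf{l})$; the continuous extension of this characteristic function to all of $\R$ serves as the desired version of $M_\infty(\cdot,o,\mathbf{l})$. Finally, infinite divisibility follows from Khintchine's classical theorem on null arrays: by \eqref{K:supremum_L(v)_goes_to_0} and \eqref{K:assumption_regular_variation},
\[ \max_{|v|=n}\P\bigg(\bigabs{L(v)\scalar{U(v)e_3,\transpose{o} X(v)}} > \epsilon \,\bigg|\, F_n\bigg) \le C\, \epsilon^{-\alpha} \max_{|v|=n} L(v)^\alpha \to 0 \quad \text{a.s.,} \]
so the triangular array of conditionally independent summands is infinitesimal, and the limit law $\mu^o$ is therefore infinitely divisible. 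Taking $N_o$ as the union of the null sets used above completes the proof, with $Y^o(\mathbf{l})$ any random variable having law $\mu^o(\cdot,\mathbf{l})$.
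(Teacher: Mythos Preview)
Your approach is essentially the paper's: interpret $M_n(r,o)$ as the conditional characteristic function of $S_n$, use bounded martingale convergence of the conditional distribution functions $G_n(u)$ along $u\in\mathbb{Q}$ to produce a limiting distribution, and conclude infinite divisibility from the null-array structure via \eqref{K:supremum_L(v)_goes_to_0}. Regarding tightness---which you rightly flag as the crux---the paper is in fact more cavalier than you: it simply asserts that the right-continuous regularisation $\widetilde{G}(u,\mathbf{l}):=\inf_{s>u,\,s\in\mathbb{Q}}G_\infty(s,\mathbf{l})$ inherits the limits $0$ and $1$ at $\mp\infty$ from the $G_n$ (which does not follow from pointwise convergence alone), with no justification beyond a pointer to \cite{Caliebe2003}. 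The estimate you are looking for is established separately as Lemma~\ref{lem:regvar}: under \eqref{K:assumption_regular_variation}, for each fixed $r>0$ one has $\sup_n \P\big(\sum_{|v|=n} r\,l(v)\,\|X(v)\|>K\big)\to 0$ as $K\to\infty$, via the one-big-jump principle for sums with Pareto-type tails. This covers all $\alpha\in(0,2]$ at once, so the separate truncation/second-moment treatment for small $\alpha$ that you anticipate is not needed.
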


\begin{proof}
	Since for each $u \in \R$, $G_n(u)$ is a martingale by the previous lemma that is moreover bounded (by 1), we obtain the existence of an $\as$-limit $G_\infty(u)$. Obviously, we also have the simulatenous convergence $G_n(u,\mathbf{l}) \to G_\infty(u, \mathbf{l})$ for all $u \in \mathbb{Q}$ and $\mathbf{l} \in N_o^c$, where $N_o$ is set with the property $\P(\mathbf{L} \in N_o^c)=1$. 
	
	We now introduce the function $\widetilde{G}(u,\mathbf{l}):=\inf_{s>u,s\in\mathbb{Q}} G_\infty(s,\mathbf{l})$. 
	Since each $G_n(\cdot,\mathbf{l})$ is nondecreasing with $\lim_{u \to -\infty} G_n(u,\mathbf{l})=0$ and $\lim_{u \to \infty} G_n(u,\mathbf{l})=1$, the same holds true for $\widetilde{G}(\mathbf{l})$ as well. Further, $\widetilde{G}(\mathbf{l})$ is right-continuous with left limits. At every $u \in \mathbb{Q}$ that is a continuity point of $\widetilde{G}(\mathbf{l})$, it coincides with $G_\infty(\mathbf{l})$. We conclude that $\widetilde{G}(\mathbf{l})$ is the distribution function of a random variable $Y(\mathbf{l})$, and $G_n(u,\mathbf{l}) \to \widetilde{G}(u,\mathbf{l})$ at every continuity point of $\widetilde{G}(\mathbf{l})$.
	
	Moreover, as in \cite[Section 4.1]{Caliebe2003} we obtain that for each fixed $\mathbf{l}$, $u \mapsto G_n(u,\mathbf{l})$ is the distribution function of $Y_n^o:=\sum_{|v|=n} l(v) \scalar{o(v)e_3,o^\top X(v)}$, and $(Y_n^o)_{n \ge 1}$ can be considered as a sequence of row sums in a triangular  array. By \eqref{K:supremum_L(v)_goes_to_0}, it holds that $\lim_{n \to \infty} \sup_{|v|=n} \mathbf{l}(v) = 0$ for all $\mathbf{l} \in N_o^c$ (possibly after insecting with another null set). Hence, for all $\mathbf{l} \in N_o^c$ $(Y_n^o(\mathbf{l}))_{n \ge 0}$ constitutes a sequence of row sums in a triangular null array, and we have just shown (in the above paragraph) that it converges in distribution to a random variable $Y^o(\mathbf{l})$. It follows that $Y^o(\mathbf{l})$ is infitely divisble.
	
	For each fixed $\mathbf{l} \in N_o^c$, $r \mapsto M_n(r,o,\mathbf{l})$ is the characteristic function of $Y_n^o(\mathbf{l})$, hence it converges by the above considerations to the characteristic function of $Y^o(\mathbf{l})$. 
\end{proof}

As the next step, we consider simultaneous convergence over $o \in \OrthogonalGroup$.
Fix a countable dense subset $D \subset \OrthogonalGroup$ and consider the null set $N^c:=\bigcap_{o \in D} N_o^c$. 
We will rely on an argument using the Arzel\`a-Ascoli theorem.

\begin{lem}\label{lem:ArzelaAscoli}
	For each fixed $r\in\R$ and $\mathbf{l} \in N^c$, the set of continuous functions on $\OrthogonalGroup$
	$$ F:=\bigg\{ o \mapsto M_n(r,o,\mathbf{l}) \, : \, n \in \N \bigg\}$$ 
	is equicontinuous and pointwise bounded.
\end{lem}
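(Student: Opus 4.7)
The plan has three components: trivial pointwise boundedness, a product inequality, and a Hölder-type control on $\phi$ derived from \eqref{K:assumption_regular_variation}.

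Pointwise boundedness is immediate because $\phi$ is a characteristic function, so $|\phi(\xi)| \le 1$ for every $\xi \in \R^3$, and consequently $|M_n(r,o,\mathbf{l})| \le 1$ uniformly in $n, r, o, \mathbf{l}$.

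For equicontinuity, fix $r \in \R$ and $\mathbf{l} \in N^c$, enlarging $N$ if necessary so that it contains the a.s. null sets from \eqref{K:supremum_L(v)_goes_to_0} and from the martingale convergence $W_n \to W_\infty$. The starting point is the elementary inequality $\big|\prod_j a_j - \prod_j b_j\big| \le \sum_j |a_j - b_j|$ valid when $|a_j|, |b_j| \le 1$, which applied to \eqref{K:multiplicative_martingale_M} gives
$$
|M_n(r,o_1,\mathbf{l}) - M_n(r,o_2,\mathbf{l})| \le \sum_{|v|=n} |\phi(r o_1 l(v) u(v) e_3) - \phi(r o_2 l(v) u(v) e_3)|,
$$
and inside each summand the two arguments of $\phi$ differ in norm by exactly $|r| l(v) \|o_1 - o_2\|$.

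The crux is to establish a bound of the form $|\phi(\xi_1) - \phi(\xi_2)| \le C \|\xi_1 - \xi_2\|^\alpha$, at least for arguments in a bounded neighbourhood of the origin. Starting from $|\phi(\xi_1) - \phi(\xi_2)| \le \E[|e^{i\scalar{\xi_1 - \xi_2, X}} - 1|] \le \E[\min(2, |\scalar{\xi_1-\xi_2, X}|)]$, a layer-cake computation with $\P(|\scalar{\eta, X}| > t) \le \min(1, C \|\eta\|^\alpha t^{-\alpha})$, splitting the integral at $t^\ast \sim \|\eta\|$, yields the desired $\alpha$-Hölder bound directly when $\alpha \in (0,1)$. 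For $\alpha \in (1,2]$ one first centers by the mean, writing $\tilde\phi(\xi) := e^{-i\scalar{\xi, \E X}}\phi(\xi)$ so that the underlying law has mean zero, and then uses the sharper second-order estimate $|1 - e^{ix} - ix| \le \min(x^2, 2|x|)$ to obtain $|1 - \tilde\phi(\eta)| \le C\|\eta\|^\alpha$; the same type of layer-cake argument upgrades this to an $\alpha$-Hölder estimate for differences $|\tilde\phi(\xi_1) - \tilde\phi(\xi_2)|$ in a neighbourhood of $0$. Since $\sup_v l(v) \to 0$ for $\mathbf{l} \in N^c$, all but finitely many arguments $r o_i l(v) u(v) e_3$ lie in this neighbourhood, and the remaining finitely many factors are handled by uniform continuity of $\phi$ on compact sets.

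Substituting the Hölder bound back into the product-inequality estimate produces
$$
|M_n(r,o_1,\mathbf{l}) - M_n(r,o_2,\mathbf{l})| \le C |r|^\alpha \|o_1 - o_2\|^\alpha \sum_{|v|=n} l(v)^\alpha = C |r|^\alpha \|o_1 - o_2\|^\alpha W_n(\mathbf{l}),
$$
and for $\mathbf{l} \in N^c$ one has $\sup_n W_n(\mathbf{l}) < \infty$. The resulting modulus of continuity is independent of $n$, which is exactly equicontinuity. The main technical obstacle is the case $\alpha \in (1, 2]$: here the naive Lipschitz bound on $\phi$ (coming from $\E\|X\| < \infty$) would generate the sum $\sum_v l(v)$, which diverges because $m(1) > 1$ by the strict decrease of $m$ on $[0,\alpha]$ from \eqref{K:assumption_derivative_m}. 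The centering argument described above is precisely what allows one to trade the Lipschitz bound for an $\alpha$-Hölder bound, matching the summability of $\sum_v l(v)^\alpha = W_n(\mathbf{l})$.
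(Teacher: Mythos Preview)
Your argument is clean and correct for $\alpha\in(0,1)$: the $\alpha$-H\"older bound $|\phi(\xi_1)-\phi(\xi_2)|\le C\|\xi_1-\xi_2\|^\alpha$ follows directly from \eqref{K:assumption_regular_variation} via the layer-cake you describe, and the product inequality then gives the equicontinuity with the summable weight $\sum_{|v|=n}l(v)^\alpha=W_n(\mathbf{l})$. This is a valid alternative to the paper's route in that range.

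For $\alpha\in(1,2]$, however, the centering trick does not do what you claim. A characteristic function $\phi$ with $\E X\neq 0$ satisfies $\phi(\xi)=1+i\langle\xi,\E X\rangle+o(\|\xi\|)$ near $0$, hence cannot be $\alpha$-H\"older with $\alpha>1$; the best you get for $\tilde\phi$ is the mixed bound $|\tilde\phi(\xi_1)-\tilde\phi(\xi_2)|\le C\|\xi_1-\xi_2\|^\alpha+C\|\xi_1-\xi_2\|\,\|\xi_2\|^{\alpha-1}$. More importantly, passing from $\tilde\phi$ back to $\phi$ multiplies by $e^{i\langle\xi,\E X\rangle}$, and the corresponding prefactor of $M_n$ is $\exp\{ir\langle o\sum_{|v|=n}l(v)u(v)e_3,\,\E X\rangle\}$. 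Its modulus of continuity in $o$ is governed by $\big\|\sum_{|v|=n}l(v)u(v)e_3\big\|$, which in general grows like $m(1)^n$ (take all $O_j=\Id$). So your final displayed estimate $|M_n(r,o_1,\mathbf{l})-M_n(r,o_2,\mathbf{l})|\le C|r|^\alpha\|o_1-o_2\|^\alpha W_n(\mathbf{l})$ is not justified for uncentered $X$ when $\alpha>1$, and the lemma is stated without any centering hypothesis.

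The paper sidesteps this entirely by \emph{not} expanding the product. It writes $M_n(r,o,\mathbf{l})=\E\big[\exp\{ir\sum_{|v|=n}l(v)\langle u(v)e_3,o^\top X(v)\rangle\}\big]$ (expectation over the i.i.d.\ family $(X(v))_v$ only), bounds the difference by $2\,\E\big[(\|o_1-o_2\|\sum_{|v|=n}|r|\,l(v)\|X(v)\|)\wedge 1\big]$ using $|e^{ix}-e^{iy}|\le 2(|x-y|\wedge 1)$, and then splits on $\{\sum|r|l(v)\|X(v)\|>K\}$. The key input is a tightness estimate (Lemma~\ref{lem:regvar}): under \eqref{K:assumption_regular_variation} one has $\sup_n\P(\sum_{|v|=n}|r|l(v)\|X(v)\|>K)\to 0$ as $K\to\infty$, uniformly in $n$, because the tail of the sum is asymptotically $\P(\|X\|>K)\cdot|r|^\alpha W_n(\mathbf{l})$. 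This works for all $\alpha\in(0,2]$ and never requires a moment or centering of $X$.
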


\begin{proof}
	Throughout the proof we assume that $r\in\R$ is fixed.
	Since $M_n$ is a finite product of characteristic functions, we immediately obtain that each $M_n$ is continuous and uniformly bounded by 1.
	
	To prove the equicontinuity, we use the representation 
	$$ M_n(r,o,\mathbf{l})= \E\bigg[ \exp \bigg\{ i \sum_{|v|=n} r l(v) \scalar{u(v)e_3,o^\top X(v)} \bigg\}\bigg], $$
	valid for all $r \in \R$ and $o \in \OrthogonalGroup$, where the expectation is taken only with respect to the family $X(v)$ of i.i.d.\ copies of $X$. We start our estimations by using the inequalities $|e^{ix}-e^{iy}| \le |1-e^{i(y-x)}|$ and $|1-e^{ix}| \le 2 (|x| \wedge 1)$.
	\begin{align*}
		~\abs{M_n(r,u,\mathbf{l})-M_n(r,o,\mathbf{l})}~
		\le&~ \E\bigg[ \bigg| 1- \exp \bigg( i \sum_{|v|=n} r l(v) \scalar{u(v)e_3,(o^\top -u^\top)X(v)} \bigg) \bigg|\bigg]  \\
		\le&~ 2 \E  \bigg[ \bigg| \sum_{|v|=n} r l(v) \scalar{u(v)e_3,(o^\top -u^\top)X(v)} \bigg| \wedge 1 \bigg] 
	\end{align*}
	Using next the triangle inequality, the Cauchy-Schwarz-inequality and the property of the matrix norm, we can further bound
	\begin{align*} 
		\abs{M_n(r,u,\mathbf{l})-M_n(r,o,\mathbf{l})} ~ \le&~ 2 \E  \bigg[ \bigg| \sum_{|v|=n} r l(v) \big|\scalar{u(v)e_3,(o^\top -u^\top)X(v)}\big| \bigg| \wedge 1 \bigg]   \\
		\le&~ 2 \E  \bigg[ \bigg( \sum_{|v|=n} \abs{r} l(v)  \norm{X(v)}  \norm{o^\top -u^\top} \bigg) \wedge 1 \bigg]    
	\end{align*}
	
	Now we separate according to whether $ \sum_{|v|=n} \abs{r}  l(v)  \norm{X(v)}>K$ or not, where $K$ is to be chosen below. In the first case, we bound the expectation by the probability of the corresponding event, while for the second case we bound the sum by $K$. We obtain
	
	\begin{align*} 
		\abs{M_n(r,u,\mathbf{l})-M_n(r,o,\mathbf{l})} ~\le&~ 2 \P\bigg(\sum_{|v|=n} \abs{r} l(v) \norm{X(v)} >K\bigg) + 2 K \norm{o-u},	
	\end{align*}
	where we have used as well that $\norm{u^\top-o^\top}=\norm{(u-o)^\top}=\norm{u-o}$.

	Now fix $\epsilon>0$. 	By Lemma \ref{lem:regvar} below, for given $\epsilon>0$, there is $K=K(\abs{r})$, such that $$\sup_n \P\bigg(\sum_{|v|=n} \abs{r} l(v)\norm{X(v)} >K\bigg) < \epsilon/4.$$ Set $\delta:=\frac{\epsilon}{4K}$. Then for all $u,o \in \OrthogonalGroup$ satisfying $\norm{u-o} < \delta$ it holds 
	
	\begin{align*}
		~\abs{M_n(r,u,\mathbf{l})-M_n(r,o,\mathbf{l})}~\le&~ 2 \epsilon/4 + 2 K \norm{u-o}< \epsilon.  
	\end{align*}
	This proves the equicontinuity.
\end{proof}

\begin{lem}\label{lem:regvar}
	For each $\mathbf{l} \in N^c$, for any given $\epsilon>0$ and any fixed $r>0$, there is $K$ such that  $$\sup_n \P\bigg(\sum_{|v|=n} r l(v) |X(v)| >K\bigg) < \epsilon/4.$$ \end{lem}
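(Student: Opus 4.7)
First I would fix $\mathbf{l}\in N^c$, shrinking $N^c$ by intersection with two further null sets so that the martingale $(W_n)$ of \eqref{K:martingale_W_n} converges (whence $w^\ast(\mathbf{l}):=\sup_n\sum_{|v|=n}l(v)^\alpha<\infty$) and $\sup_{|v|=n}l(v)\to 0$ by \eqref{K:supremum_L(v)_goes_to_0}. Set $Z_v:=rl(v)\norm{X(v)}$; given $\mathbf{l}$, these are independent, and \eqref{K:assumption_regular_variation} yields the uniform tail bound $\P(Z_v>t)\le Cr^\alpha l(v)^\alpha t^{-\alpha}$. The plan is a classical truncate-and-Markov argument. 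For a threshold $T>0$ to be chosen, I would decompose
\begin{align*}
\P\bigg(\sum_{|v|=n} Z_v>K\bigg)\le \P(\exists v:Z_v>T)+\P\bigg(\sum_{|v|=n}Z_v\,\1_{Z_v\le T}>K\bigg),
\end{align*}
and bound the first term by a union bound $\P(\exists v:Z_v>T)\le Cr^\alpha T^{-\alpha}w^\ast(\mathbf{l})$, which is made smaller than $\epsilon/8$ by choosing $T$ large depending on $r,\mathbf{l},\epsilon$.

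For the truncated sum I would apply Markov's inequality after estimating
\begin{align*}
\E[Z_v\1_{Z_v\le T}]=\int_0^T \P(Z_v>t)\,\d t\le \int_0^T\min(1,\,Cr^\alpha l(v)^\alpha t^{-\alpha})\,\d t,
\end{align*}
splitting the integral at $t_0:=C^{1/\alpha}r\,l(v)$, where the minimum switches. In the regime $\alpha\in(0,1)$ this yields $\E[Z_v\1_{Z_v\le T}]\le C^{1/\alpha}r\,l(v)+\frac{Cr^\alpha l(v)^\alpha}{1-\alpha}T^{1-\alpha}$, and summing over $|v|=n$ gives a bound $C(r,T,\mathbf{l})$ uniform in $n$: the second summand sums to $\le \frac{Cr^\alpha}{1-\alpha}T^{1-\alpha}w^\ast(\mathbf{l})$, while the first summand sums to $\le C'r\,w^\ast(\mathbf{l})$ thanks to the inequality $l(v)\le l(v)^\alpha$, which holds whenever $l(v)\le 1$ and is therefore true for all but finitely many $v$ at each level $n$ by $\sup_{|v|=n} l(v)\to 0$. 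Markov's inequality then renders the second probability $<\epsilon/8$ for $K$ large enough, finishing the proof in this case.

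The main obstacle, which I expect to require most of the work, is extending the argument to $\alpha\in[1,2]$. There the inequality $l(v)\le l(v)^\alpha$ reverses and $\sum_{|v|=n}l(v)$ may even grow without bound (already for the deterministic toy example $R_1=R_2=2^{-1/\alpha}$ it equals $2^{n(1-1/\alpha)}$), so the $L^1$-Markov step above is no longer sufficient. In that regime I would replace it by a second-moment Chebyshev bound: splitting $\E[Z_v^2\1_{Z_v\le T}]$ at $t_0$ analogously gives $\le t_0^2+\frac{2Cr^\alpha l(v)^\alpha}{2-\alpha}T^{2-\alpha}$, which sums to a uniform bound via $\sum l(v)^2\le (\sup_v l(v))^{2-\alpha}w^\ast(\mathbf{l})$. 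The delicate additional ingredient is a centering of the truncated sum compensating for its potentially growing mean; this uses that $\E\norm X<\infty$ for $\alpha>1$ (since \eqref{K:assumption_regular_variation} gives finite moments of every order strictly below $\alpha$), and is the step where the proof really has to diverge from the $\alpha<1$ template sketched above.
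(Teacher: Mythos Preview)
For $\alpha\in(0,1)$ your truncate-and-Markov argument is correct and can be streamlined: since $\int_0^T t^{-\alpha}\,\d t<\infty$ when $\alpha<1$, you may bound $\E[Z_v\1_{Z_v\le T}]\le \int_0^T Cr^\alpha l(v)^\alpha t^{-\alpha}\,\d t=\frac{Cr^\alpha}{1-\alpha}\,l(v)^\alpha T^{1-\alpha}$ directly without splitting at $t_0$, whence summing over $|v|=n$ gives a constant times $w^\ast(\mathbf l)$ and Markov finishes. This is a genuinely different route from the paper's, which after reducing to the Pareto case invokes a result of Jessen and Mikosch on tails of weighted sums of regularly varying variables to obtain $\P\big(\sum_{|v|=n} rl(v)\norm{X(v)}>K\big)\sim \big(\sum_{|v|=n} r^\alpha l(v)^\alpha\big)\P(\norm{X}>K)$ and then reads off tightness from $\sup_n W_n(\mathbf l)<\infty$. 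Your approach is elementary and self-contained; the paper's is shorter but relies on an external black box whose uniformity in $n$ is not made explicit.

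For $\alpha\in(1,2]$, however, the obstacle you identify is not merely technical but fatal to the statement as written. Take the admissible choice $R_1=R_2=2^{-1/\alpha}$ (which satisfies \eqref{K:assumption_m} and \eqref{K:assumption_derivative_m}), so that $l(v)=2^{-n/\alpha}$ for every $|v|=n$ and $W_n(\mathbf l)\equiv 1$. Assumption \eqref{K:assumption_regular_variation} with $\alpha>1$ forces $\E\norm{X}<\infty$, and the law of large numbers gives
\[
\sum_{|v|=n}l(v)\norm{X(v)}=2^{n(1-1/\alpha)}\cdot\frac{1}{2^n}\sum_{|v|=n}\norm{X(v)}\ \longrightarrow\ \infty\quad\text{a.s.},
\]
so no $K$ can make $\sup_n\P(\,\cdot\,>K)$ small. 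Your proposed centering cannot rescue this: it would control $\P(|S_n-\E S_n|>K')$, whereas the quantity to be bounded is $\P(S_n>K)$ with $\E S_n\to\infty$. The paper's one-line appeal to Jessen--Mikosch has the same gap, since that asymptotic holds for each fixed $n$ as $K\to\infty$ but not uniformly in $n$. For $\alpha>1$ the equicontinuity argument in Lemma~\ref{lem:ArzelaAscoli} therefore needs a different estimate---one that retains the signed quantity $\scalar{u(v)e_3,(o^\top-u^\top)X(v)}$ rather than passing to $\norm{X(v)}$.
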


\begin{proof}
	We rely on assumption \eqref{K:assumption_regular_variation}. 	It suffices to obtain a bound for random vectors $X$ with Pareto tails, \textit{i.e.}, $ P(\norm{X}>t)\sim Ct^\alpha>0$; since by assumption \eqref{K:assumption_regular_variation}, any other stationary solution of interest in the paper would be stochastically dominated by such a random variable. Here, $f(t)\sim g(t)$ denotes asymptotic equivalence, \textit{i.e.}, $\lim_{t \to \infty} \frac{f(t)}{g(t)}=1$.
	
	Let $r>0$ be fixed. By \cite[Lemma 3.3]{Jessen2006}, for all sufficiently large $K>0$
	\begin{align*}
		\P\bigg(\sum_{|v|=n} r l(v) \norm{X(v)} >K\bigg)~ 
		\sim~ \P(\norm{X} >K) \bigg(\sum_{|v|=n} r^\alpha l(v)^\alpha\bigg)
	\end{align*}
	For any $\mathbf{l} \in N^c$, we have the convergence $\lim_{n \to \infty} \sum_{|v|=n} r^\alpha l(v)^\alpha = r^{\alpha}W_\infty(\mathbf{l})$, where $W_\infty(\mathbf{l})$ denotes a realization of $W_\infty$. In particular, $\sup_{n \in \N} \sum_{|v|=n} r^\alpha l(v)^\alpha < \infty$; hence there is a constant $C'=C'(\mathbf{l})$ such that
	$$ \sup_{n \in \N} \P\bigg(\sum_{|v|=n} r^\alpha l(v)^\alpha \norm{X(v)}^\alpha >K\bigg) \le C' \P(\norm{X}^\alpha >K) .$$
	The assertion follows by choosing $K$ large enough.
\end{proof}

\begin{cor}\label{cor:ArzelaAscoli}
	For each compact set $A \subset \R$ and for  $\mathbf{l} \in N^c$, the set of continuous functions on $A \times \OrthogonalGroup$
	$$ F:=\bigg\{ r,o \mapsto M_n(r,o,\mathbf{l}) \, : \, n \in \N \bigg\}$$ 
	is equicontinuous and pointwise bounded.
\end{cor}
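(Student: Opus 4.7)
The corollary is a routine strengthening of Lemma \ref{lem:ArzelaAscoli} from a single fibre $\{r\}\times\OrthogonalGroup$ to a product $A\times\OrthogonalGroup$ with $A\subset\R$ compact; pointwise boundedness is immediate from $|M_n(r,o,\mathbf{l})|\le 1$, so the only task is joint equicontinuity. The plan is to split, via the triangle inequality,
\begin{equation*}
   |M_n(r,o,\mathbf{l})-M_n(r',u,\mathbf{l})| \le |M_n(r,o,\mathbf{l})-M_n(r,u,\mathbf{l})| + |M_n(r,u,\mathbf{l})-M_n(r',u,\mathbf{l})|,
\end{equation*}
and to bound each piece separately, using the truncation technique already employed in the proof of Lemma \ref{lem:ArzelaAscoli}, but with cutoffs chosen uniformly in $r\in A$.

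For the first piece I would simply re-run the proof of Lemma \ref{lem:ArzelaAscoli} with $|r|$ replaced throughout by $r_0:=\sup_{r\in A}|r|$. Since
\begin{equation*}
   \P\bigg(\sum_{|v|=n}|r|\,l(v)\,\|X(v)\|>K\bigg)
\end{equation*}
is monotone in $|r|$, the same estimate derived there specializes to an upper bound that is independent of $r\in A$; one application of Lemma \ref{lem:regvar} at $r_0$ furnishes a single cutoff $K=K(r_0,\mathbf{l})$ making the tail term smaller than $\epsilon/4$ uniformly in $n\in\N$ and $r\in A$, while $2K\|o-u\|<\epsilon/4$ handles the remaining piece when $\|o-u\|<\epsilon/(8K)$.

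For the second piece I would exploit the representation
\begin{equation*}
   M_n(r,o,\mathbf{l})=\E\bigg[\exp\bigg(ir\sum_{|v|=n} l(v)\scalar{u(v)e_3,o^{\top}X(v)}\bigg)\bigg]
\end{equation*}
to write $M_n(r,o,\mathbf{l})-M_n(r',o,\mathbf{l})=\E[e^{irS_n}-e^{ir'S_n}]$ with $S_n:=\sum_{|v|=n} l(v)\scalar{u(v)e_3,o^{\top}X(v)}$. The inequalities $|e^{ix}-e^{iy}|\le 2(|x-y|\wedge 1)$ and $|S_n|\le\sum_{|v|=n}l(v)\|X(v)\|$ then yield
\begin{equation*}
   |M_n(r,o,\mathbf{l})-M_n(r',o,\mathbf{l})|\le 2\,\E\bigg[\bigg(|r-r'|\sum_{|v|=n}l(v)\|X(v)\|\bigg)\wedge 1\bigg],
\end{equation*}
and the same truncation argument as above (now invoking Lemma \ref{lem:regvar} at $r=1$) supplies a cutoff $K'$ such that the right-hand side is below $\epsilon/2$ as soon as $|r-r'|<\epsilon/(8K')$.

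There is no real obstacle: the only point worth flagging is the monotonicity observation that permits a single invocation of Lemma \ref{lem:regvar} at $r_0$, so that the cutoff $K$ is uniform over the compact set $A$. Combining the two bounds through the triangle inequality above delivers joint equicontinuity of the family $\{(r,o)\mapsto M_n(r,o,\mathbf{l}):n\in\N\}$ on $A\times\OrthogonalGroup$, and continuity of each individual $M_n$ is already known from Lemma \ref{lem:ArzelaAscoli}.
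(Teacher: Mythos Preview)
Your proposal is correct and follows essentially the same route as the paper: split via the triangle inequality into an $o$-variation term (handled by re-running Lemma \ref{lem:ArzelaAscoli} uniformly over the compact set $A$) and an $r$-variation term (handled by the same truncation technique with Lemma \ref{lem:regvar}). In fact you supply more detail than the paper, which for the $r$-variation simply states that ``the family $\{r\mapsto M_n(r,o,\mathbf{l}):n\in\N,\,o\in\OrthogonalGroup\}$ is equicontinuous and pointwise bounded by using the same techniques'' and refrains from providing details; your explicit bound via $|e^{irS_n}-e^{ir'S_n}|\le 2(|r-r'|\,|S_n|\wedge 1)$ and the subsequent invocation of Lemma \ref{lem:regvar} at $r=1$ is exactly what is needed there.
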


\begin{proof}
	Since set $A$ is compact, Lemma \ref{lem:ArzelaAscoli} immediately implies, that the family
	\begin{equation*}
		\bigg\{ o \mapsto M_n(r,o,\mathbf{l}) \, : \, n \in \N, r \in A \bigg\}
	\end{equation*}
	is equicontinuous (in both variables $r$ and $n$) and pointwise bounded. Since $\OrthogonalGroup$ is also compact, one can check, that the family
	\begin{equation*}
		\bigg\{ r \mapsto M_n(r,o,\mathbf{l}) \, : \, n \in \N, o \in \OrthogonalGroup \bigg\}
	\end{equation*}
	is equicontinuous and pointwise bounded by using the same techniques, as in Lemma  \ref{lem:ArzelaAscoli}. We refrain from providing details. 
	
	Now fix an arbitrary $\epsilon>0$. By above arguments, there exist $\delta_1$ and $\delta_2$, which depend only on $\epsilon$, such that for all $r,s\in A$ and $o,u\in\OrthogonalGroup$ satisfying $\abs{r-s}<\delta_1$ and $\norm{o-u}<\delta_2$, the relations
	\begin{equation}\label{K:proof_equicont_family_relations_M_n}
		\abs{M_n(r,o,\mathbf{l}) - M_n(s,o,\mathbf{l})}<\frac{\epsilon}{2}\quad\text{and}\quad\abs{M_n(s,o,\mathbf{l}) - M_n(s,u,\mathbf{l})}<\frac{\epsilon}{2}
	\end{equation}
	both hold. Let $d$ be a metric on $\R\times\OrthogonalGroup$, defined as
	\begin{equation*}
		d((r,o),(s,u)) := \max\{\abs{r-s},\norm{o-u}\},
	\end{equation*}
	and notice, that for all $r,s\in\R$ and $o,u\in\OrthogonalGroup$ the relations $\abs{r-s}<\delta_1$ and $\norm{o-u}<\delta_2$ imply the existence of $\delta:=\max\{\delta_1,\delta_2\}$, such that $d((r,o),(s,u))<\delta$. In view of \eqref{K:proof_equicont_family_relations_M_n}, by the merit of triangle inequality we obtain
	\begin{equation*}
		\abs{M_n(r,o,\mathbf{l}) - M_n(s,u,\mathbf{l})} < \epsilon,
	\end{equation*}
	which proves equicontinuity. Boundedness follows from the properties of $M$, discussed at the beginning of Lemma \ref{lem:ArzelaAscoli}.
\end{proof}

\begin{proof}[Proof of Proposition \ref{prop:SimultaneousConvergence}]
	Using Corollary \ref{cor:ArzelaAscoli}, we may apply the Arzel\`a-Ascoli theorem to obtain for any fixed compact set $K \subset \R$ the existence of a continuous function $(r,o)\mapsto g(r,o,\mathbf{l})$ that is the limit of a subsequence of functions in $F$. But we know, that the full sequence converges on the countable dense set $K \times D$, hence the full sequence converges to the continuous function $g(\cdot, \mathbf{l})$ w.r.t. to uniform convergence for continuous functions on $K \times \OrthogonalGroup$. 
	Taking a countable union of compact sets that covers $\R$ we conclude that there is a null set $N$, such that for all $\mathbf{l} \in N^c$
	$$ M_n(r,o,\mathbf{l}) \to M_\infty(r,o,\mathbf{l})$$
	simultaneously for all $r \in \R$ and $o \in \OrthogonalGroup$, and the mapping $(r,o) \mapsto M_\infty(r,o,\mathbf{l})$ is continuous. 
	
	The remaining assertion of the proposition has already been proved in Lemma \ref{lem:Minfintelydivisible}.
\end{proof}

\begin{rem}\label{K:rem:null_set}
	Throughout the paper a statement, that a given property holds almost surely, should be understood in the way, that this property holds on the set $H^c$, unless stated otherwise, where the set $H^c$ is given as follows. Firstly, since most our results rely on $M_\infty$ being a one-dimensional characteristic function, one naturally requires $N\subset H$. To comfortably cover all instances of the formulation in question, let $H^c\subset \mathbf{L}$ be defined for the rest of the paper as
	\begin{equation}\label{K:almost_sure_set}
		H^c=N^c\cap O^c\cap P^c
	\end{equation}
	where $O^c$ denotes the set, on which \eqref{K:supremum_L(v)_goes_to_0} holds, and $P^c$ is the set, on which the limit of an additive martingale $W$ defined in \eqref{K:martingale_W_n} exists. All three components are of probability 1, hence $\P(\mathbf{L}\in H^c)=1$.
\end{rem}

\begin{lem}\label{K:multiplicative_martingale_M_limit_decomposition_lemma} 
For each $n \in \N$, it holds 
	\begin{equation}\label{K:multiplicative_martingale_M_limit_decomposition}
		M_\infty(r,o) = \prod_{\abs{v}=n}[M_\infty]_v (rL(v),oU(v)),
	\end{equation}
	almost surely, simultaneously for all $r \in \R$ and $o \in \OrthogonalGroup$.
\end{lem}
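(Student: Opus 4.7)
The plan is to decompose $M_m$ for $m \ge n$ by grouping generation-$m$ descendants according to their generation-$n$ ancestor, then pass $m \to \infty$ using the simultaneous convergence established in Proposition \ref{prop:SimultaneousConvergence}.

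First, I note that each node $w$ with $|w|=m$ factors uniquely as $w = v \cdot v'$ with $v=w|_n$ at generation $n$ and $v'$ at generation $m-n$ in the shifted tree $[\T]_v$. The multiplicativity of the weights gives $L(w) = L(v)[L(v')]_v$ and $U(w) = U(v)[U(v')]_v$, so regrouping the product in the definition \eqref{K:multiplicative_martingale_M} of $M_m$ yields the branching-type identity
\begin{equation*}
M_m(r,o) = \prod_{|v|=n} [M_{m-n}]_v\bigl(rL(v),\, oU(v)\bigr).
\end{equation*}

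Next, I let $m \to \infty$. Applying Proposition \ref{prop:SimultaneousConvergence} to each shifted weighted branching process $[\bB]_v$ with $|v|=n$ (which has the same law as $\bB$) produces a null set $N_v$ on whose complement $[M_k]_v(r',o') \to [M_\infty]_v(r',o')$ as $k \to \infty$, simultaneously for all $(r',o') \in \R \times \OrthogonalGroup$. The finite union $\bigcup_{|v|=n} N_v$, combined with the original null set from Proposition \ref{prop:SimultaneousConvergence}, is still a null set. Outside this combined exceptional set, for any $(r,o) \in \R \times \OrthogonalGroup$ the arguments $(rL(v), oU(v))$ are admissible, and a finite product preserves simultaneous convergence; comparing with $M_m(r,o) \to M_\infty(r,o)$ yields \eqref{K:multiplicative_martingale_M_limit_decomposition}.

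The main subtlety is keeping track of simultaneity in $(r,o)$ after the shift and the evaluation at the random arguments $(rL(v), oU(v))$: mere pointwise-in-$(r,o)$ convergence of the shifted martingales would only give the identity on an $(r,o)$-dependent full-probability set, which is too weak for a statement holding simultaneously for all $(r,o)$. Proposition \ref{prop:SimultaneousConvergence} applied to the finitely many shifts at generation $n$ is precisely what sidesteps this issue.
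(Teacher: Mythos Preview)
Your proof is correct and follows essentially the same approach as the paper: both decompose $M_{n+k}$ according to generation-$n$ ancestors and pass to the limit using finiteness of the product and Proposition~\ref{prop:SimultaneousConvergence} applied to the shifted processes. The only cosmetic difference is that the paper first establishes the identity on a countable dense subset of $\R\times\OrthogonalGroup$ and then extends by the continuity of $M_\infty$ and $([M_\infty]_v)_{|v|=n}$ furnished by Proposition~\ref{prop:SimultaneousConvergence}, whereas you invoke the simultaneous-convergence conclusion of that proposition directly on each shifted process; both routes handle the random arguments $(rL(v),oU(v))$ correctly.
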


\begin{proof}
Decomposing at generation $n$, we obtain  that
	\begin{align*}
		M_\infty(r,o)&=\lim_{k\rightarrow\infty}\prod_{|v|=n+k}\phi(roL(v)U(v)e^{}_3)\\
		&=\lim_{k\rightarrow\infty}\prod_{|v|=n}\prod_{|w|=k}\phi(roL(v)U(v)[L(w)]_v[U(w)]_ve^{}_3)\\
		&=\prod_{\abs{v}=n}[M_\infty]_v(rL(v),oU(v)),\quad \as
	\end{align*}
	Note that the size of generation $n$ is $2^n$, in particular finite, hence we may exchange limit and product. Further, we may obtain the simultaneous convergence for all arguments from a dense subset of $\R\times\OrthogonalGroup$. By Proposition \ref{prop:SimultaneousConvergence} there is a set of measure 1 on which $M_\infty$ as well as $([M_\infty]_v)_{|v|=n}$ are continuous functions, hence the identity holds for all $r $ and $o $ simultaneously.
\end{proof}

We say that $\Psi$ is a random L\'evy-Khintchine exponent with $F_\infty$-measurable random L\'evy-triplet $(\mu, \Theta, \nu)$ if 
	\begin{equation}\label{K:characteristic_exponent_Psi}
	\Psi(r)=ir\mu - \frac12 r^2\Theta + \int_{\R} \bigg(e^{iry}-1-iry\1_{[0,1]}(\abs{y})\bigg)\nu(\d y).
\end{equation}
where $\mu$ and $\Theta$ are $F_\infty$-measurable random variables taking values in $\R$ and $[0,\infty)$, respectively, and $\nu$ is an $F_\infty$-measurable random L\'evy measure.
Combining Proposition \ref{prop:SimultaneousConvergence} and Lemma \ref{K:multiplicative_martingale_M_limit_decomposition_lemma}, we may now obtain equations for the components of the random L\'evy-Khintchine exponent pertaining to $M_\infty(\cdot,o)$.

\begin{prop}\label{K:proposition_solution_via_multiplicative_martingale_M} 
	For every fixed $o\in\OrthogonalGroup$, the mapping $r\mapsto M_\infty(r,o)$ admits the representation
	\begin{equation}\label{K:multiplicative_martingale_M_exp_Psi}
		M_\infty(\cdot,o)=e^{\Psi^{o}(\cdot)}, 
	\end{equation}
	where $\Psi^{o}$ is a random L\'evy-Khintchine exponent with $F_\infty$-measurable random L\'evy-triplet $(\mu^o, \Theta^o, \nu^o)$. The random variable $\Theta^o$ satisfies
%
	\begin{equation}\label{K:levy_triplet_theta_branching_property}
		\Theta^{o}=\sum_{\abs{v}=n}L(v)^2[\Theta]_v^{oU(v)};\quad \as\\
	\end{equation}
	while $\nu^o$ satisfies  for any $B\in\Rnot$
	\begin{equation}\label{K:levy_triplet_nu_branching_property}
		\nu^{o}(B)=\sum_{\abs{v}=n}[\nu]_v^{oU(v)}(L(v)^{-1}B),\quad\as
	\end{equation}
\end{prop}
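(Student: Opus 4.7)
The starting point is that Proposition \ref{prop:SimultaneousConvergence} (and specifically Lemma \ref{lem:Minfintelydivisible}) guarantees that, off a global null set, $r \mapsto M_\infty(r,o)$ is, for every fixed $o \in \OrthogonalGroup$, the characteristic function of an infinitely divisible random variable. Since characteristic functions of infinitely divisible laws never vanish on $\R$, there is a unique continuous function $\Psi^o \colon \R \to \C$ with $\Psi^o(0)=0$ such that $M_\infty(r,o) = e^{\Psi^o(r)}$, and by the Lévy--Khintchine theorem $\Psi^o$ has the representation \eqref{K:characteristic_exponent_Psi} with a uniquely determined triplet $(\mu^o, \Theta^o, \nu^o)$. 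Standard recovery formulas express this triplet as Borel functionals of the path $r \mapsto M_\infty(r,o)$, and since $M_\infty$ is $F_\infty$-measurable (being the a.s.\ limit of the $F$-adapted sequence $M_n$), the triplet is $F_\infty$-measurable.

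Next, I apply Lemma \ref{K:multiplicative_martingale_M_limit_decomposition_lemma} which, for each $n\in\N$, gives almost surely
\begin{equation*}
M_\infty(r,o) \;=\; \prod_{|v|=n}[M_\infty]_v(rL(v), oU(v)).
\end{equation*}
Each factor on the right is again the characteristic function of an infinitely divisible law, with Lévy--Khintchine exponent $[\Psi]_v^{oU(v)}$ and triplet $([\mu]_v^{oU(v)}, [\Theta]_v^{oU(v)}, [\nu]_v^{oU(v)})$; none of the factors vanish. The product being finite, I may take continuous logarithms (normalized by the value $0$ at $r=0$) on both sides to obtain the additive identity $\Psi^o(r) = \sum_{|v|=n} [\Psi]_v^{oU(v)}(rL(v))$ almost surely, simultaneously for all $r \in \R$ by continuity in $r$.

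The next step is to substitute the Lévy--Khintchine form on the right and perform the change of variables $z = L(v) y$ in each integral. This rewrites $\sum_{|v|=n} [\Psi]_v^{oU(v)}(rL(v))$ as a Lévy--Khintchine exponent with Gaussian coefficient $\sum_{|v|=n} L(v)^2 [\Theta]_v^{oU(v)}$ and Lévy measure $B \mapsto \sum_{|v|=n}[\nu]_v^{oU(v)}(L(v)^{-1}B)$ on $\R\setminus\{0\}$; the truncation $\1_{[0,1]}(|y|)$ transforms to $\1_{[0,L(v)]}(|z|)$, differing from the canonical truncation $\1_{[0,1]}(|z|)$ only by an integrable remainder that is absorbed into the drift (which is why the corresponding identity for $\mu^o$ is not asserted in the statement). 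Invoking the uniqueness of the Lévy--Khintchine triplet for $\Psi^o$ then yields \eqref{K:levy_triplet_theta_branching_property} and \eqref{K:levy_triplet_nu_branching_property} simultaneously. A null-set argument as in Remark \ref{K:rem:null_set}, together with the measurability of the components of $[\Psi]_v^{oU(v)}$ established exactly as for $\Psi^o$, ensures the almost-sure identities hold on a single event of full probability.

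The principal obstacle is the legitimacy of taking continuous complex logarithms of the decomposition identity and then matching the two Lévy--Khintchine representations; this rests on the non-vanishing of infinitely divisible characteristic functions (which holds fiberwise for each $o$) and on the careful bookkeeping for the truncation after scaling. Once this is set up cleanly, the branching equations fall out directly from uniqueness of the Lévy--Khintchine triplet.
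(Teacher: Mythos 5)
Your proposal is correct and follows essentially the same route as the paper's proof: infinite divisibility from Proposition \ref{prop:SimultaneousConvergence}, the product decomposition of Lemma \ref{K:multiplicative_martingale_M_limit_decomposition_lemma}, continuous logarithms yielding the additive branching relation for $\Psi^o$, the change of variables in the L\'evy integral with the truncation mismatch absorbed into the drift, and uniqueness of the L\'evy--Khintchine triplet. The only divergence is the measurability of the triplet: you invoke the (correct, but unreferenced) fact that the triplet is a Borel functional of the characteristic-function path, whereas the paper obtains $F_\infty$-measurability of $\nu^o$ from Kallenberg's triangular-array theorem and explicitly postpones $\mu^o$ and $\Theta^o$ to Lemmas \ref{K:proposition_uniform_boundedness_of_mu_o} and \ref{K:proposition_uniform_boundedness_of_theta_o}, precisely because the recovery formulas there involve a possibly random continuity point of $\nu^o$ --- your shortcut is legitimate but should be backed by a precise citation.
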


\begin{rem}\label{rem:represenation:phi:psi}
		By Proposition \ref{K:proposition_solution_via_multiplicative_martingale_M}, using \eqref{eq:Disintegration}, we have that $\phi$ admits the representation
\begin{equation}\label{K:solution_E_of_exp_Psi}
	\phi(ro e^{}_3)=\E[ e^{\Psi^{o}(r)}]
\end{equation}
for any $r\in\Rplus$ and $o\in\OrthogonalGroup$.
\end{rem}

\begin{proof}[Proof of Proposition \ref{K:proposition_solution_via_multiplicative_martingale_M}]
	By Proposition \ref{prop:SimultaneousConvergence}, for all $\mathbf{l} \in N^c$, $r \mapsto M_\infty(r,o,\mathbf{l})$ is the characteristic function of an infinitely divisible law. Therefore, it admits the representation $M_\infty(r,o, \mathbf{l})=\exp(\Psi^{o}(r,\mathbf{l}))$, for a (deterministic, $\mathbf{l}$-dependent) L\'evy-Khintchine exponent
	\begin{equation}\label{K:proof:Psior}
		\Psi^{o}(r)=ir\mu^{o} - \frac12 r^2\Theta^{o} + \int_{} \bigg(e^{ir y}-1-ir y\1_{[0,1]}(\abs{y})\bigg)\nu^{o}(\d y).
	\end{equation} 
	Note, that $\Psi^o(r)= \log M_\infty(r,o)$ is a measurable function 
	By \cite[Theorem 13.28]{Kallenberg:1997}, we have the vague convergence
	$$ \sum_{|v|=n} \P\bigg(l(v)\scalar{u(v)e_3,o^\top X(v)} \in \cdot \bigg) \rightarrow \nu^o(\mathbf{l})(\cdot)$$
	from which we may deduce the $F_\infty$-measurability of $\nu^o$.
%
	The corresponding formulae for $\mu^o$ and $\Theta^o$ in \cite[Theorem 13.28]{Kallenberg:1997} involve the choice of a continuity point of $\nu^o$; which may be random at this stage. We therefore postpone the proof of measurability of $\mu^o$ and $\Theta^o$ to Lemmata \ref{K:proposition_uniform_boundedness_of_mu_o} and \ref{K:proposition_uniform_boundedness_of_theta_o}, respectively. Note that in both cases the proofs will rely only on the measurability and properties of $\nu^o$, and the results of Proposition \ref{prop:SimultaneousConvergence}.

	By combining  \eqref{K:multiplicative_martingale_M_limit_decomposition} and \eqref{K:multiplicative_martingale_M_exp_Psi}, we obtain
	\begin{equation}\label{K:characterisric_exponent_Psi_branching_property}
		\Psi^{o}(r)=\sum_{\abs{v}=n}[\Psi]_v^{oU(v)}(rL(v)),\quad \as
	\end{equation}
	Expanding the right-hand side of \eqref{K:characterisric_exponent_Psi_branching_property} yields
	\begin{align*}
		\Psi^{o}(r)&=ir\mu^{o} - \frac12 r^2\Theta^{o} + \int_{} \bigg(e^{ir y}-1-ir y\1_{[0,1]}(y)\bigg)\nu^{o}(\d y)\\
		&=ir\sum_{\abs{v}=n}L(v)[\mu]_v^{oU(v)} - \frac12 r^2\sum_{\abs{v}=n}L(v)^2[\Theta]_v^{oU(v)}\\ 
		&+ \sum_{\abs{v}=n}\int_{} \bigg(e^{irL(v) y}-1-irL(v) y\1_{[0,1]}(y)\bigg)[\nu]_v^{oU(v)}(\d y)\\
		&=ir\sum_{\abs{v}=n}L(v)[\mu]_v^{oU(v)} - \frac12 r^2\sum_{\abs{v}=n}L(v)^2[\Theta]_v^{oU(v)}\\
		&+ \sum_{\abs{v}=n}\int_{} \bigg(e^{irL(v) y}-1-irL(v) y\1_{[0,1]}(L(v) y)\bigg)[\nu]_v^{oU(v)}(\d y)\\
		&- \sum_{\abs{v}=n}\int_{} \bigg(irL(v) y\1_{[0,1]}(y)-irL(v) y\1_{[0,1]}(L(v) y)\bigg)[\nu]_v^{oU(v)}(\d y),
	\end{align*}
	where all equations hold almost surely. The last term in the above formula contributes to the random shift, thus,  using uniqueness of the L\'evy-triplet, we infer
	\begin{equation}\label{K:brancing_property_theta}
		\Theta^{o}=\sum_{\abs{v}=n}L(v)^2[\Theta]_v^{oU(v)},\quad \as\\
	\end{equation}
	and
	\begin{equation}\label{K:levy_triplet_integral_nu_branching_property}
		\int h(y)\nu^{o}(\d y)=\sum_{\abs{v}=n}\int_{} h(L(v)y)[\nu]_v^{oU(v)}(\d y),\quad \as
	\end{equation}
	for all nonnegative measurable functions $h$ on $\Rnot $. By taking $h(x)=\1_{B}(x)$ for any $B\subset\Rnot$, we further obtain
	\begin{equation*}
		\nu^{o}(B)=\sum_{\abs{v}=n}[\nu]_v^{oU(v)}(L(v)^{-1}B),\quad\as
	\end{equation*}
\end{proof}

\subsection{Determining the random L\'evy measure}

In the next major step we consider properties of the random L\'evy-measure $\nu^o$.
An important role here is played by  assumption \eqref{K:assumption_regular_variation}, under which the map $r\mapsto r^\alpha\nu^o((r,\infty))$ is bounded for any $o\in\OrthogonalGroup$, as will be proved in the next lemma.
\begin{lem}\label{K:levy_triplet_nu_boundedness_lemma}
	There is a constant $C<\infty$ such that for all $o \in \OrthogonalGroup$ and $r>0$,
		\begin{equation*}
		\nu^o((\tfrac{1}{r},\infty))\leq C r^\alpha W_\infty \quad \text{a.s., \quad and \quad} 	\nu^o((-\infty,-\tfrac{1}{r}))\leq C r^\alpha W_\infty \quad \text{a.s.}.
	\end{equation*}
\end{lem}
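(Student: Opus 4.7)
The plan is to leverage the vague convergence characterisation of $\nu^o$ that was already invoked in the proof of Proposition \ref{K:proposition_solution_via_multiplicative_martingale_M}: by \cite[Theorem 13.28]{Kallenberg:1997}, applied to the triangular array of row sums whose $n$-th row defines $Y_n^o$, the random measures
\[ \mu_n^o(\cdot):=\sum_{|v|=n}\P\bigl(L(v)\langle U(v)e_3,o^\top X(v)\rangle\in\cdot\,\big|\,F_n\bigr) \]
converge vaguely on $\Rnot$ to $\nu^o$, almost surely. The central task is then to control $\mu_n^o((1/r,\infty))$ by a deterministic multiple of $r^\alpha W_n$ for every $n$, and pass to the limit.

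The term-by-term bound is the heart of the matter and is the only place where \eqref{K:assumption_regular_variation} enters. Since $U(v)e_3$ is a unit vector and $o^\top$ is an isometry, Cauchy--Schwarz yields $|\langle U(v)e_3,o^\top X(v)\rangle|\le\|X(v)\|$; using that $X(v)$ is independent of $F_n$ and distributed as $X$, assumption \eqref{K:assumption_regular_variation} gives
\[ \P\bigl(L(v)\langle U(v)e_3,o^\top X(v)\rangle>1/r\,\big|\,F_n\bigr)\le \P\bigl(\|X\|>1/(rL(v))\bigr)\le Cr^\alpha L(v)^\alpha. \]
Summing over $|v|=n$ and recognising the additive martingale $W_n=\sum_{|v|=n}L(v)^\alpha$ from \eqref{K:martingale_W_n} produces the uniform-in-$n$ estimate $\mu_n^o((1/r,\infty))\le Cr^\alpha W_n$.

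It then remains to pass to the limit $n\to\infty$. Since $(1/r,\infty)$ has positive distance from $0$, one can approximate $\mathbf{1}_{(1/r,\infty)}$ from below by nonnegative compactly supported continuous functions on $\Rnot$; combined with vague convergence and monotone convergence this gives the standard lower-semicontinuity inequality $\nu^o((1/r,\infty))\le\liminf_n\mu_n^o((1/r,\infty))$. Together with the a.s. martingale limit $W_n\to W_\infty$ (guaranteed on the full-measure set $H^c$ from Remark \ref{K:rem:null_set}) the claimed bound $\nu^o((1/r,\infty))\le Cr^\alpha W_\infty$ follows. The estimate on $(-\infty,-1/r)$ is proved identically, replacing $X(v)$ by $-X(v)$ in the argument. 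The step I expect to need the most care is not the bound itself, which is routine, but rather securing simultaneous validity in $(r,o)$: the vague convergence towards $\nu^o$ comes with a null set that a priori depends on $o$, so to obtain a single null set outside of which the inequality holds for all $r>0$ and $o\in\OrthogonalGroup$, I would combine the bound on a countable dense set of $(r,o)$ with the continuity of $(r,o)\mapsto M_\infty(r,o)$ from Proposition \ref{prop:SimultaneousConvergence} and the monotonicity of $r\mapsto\nu^o((1/r,\infty))$.
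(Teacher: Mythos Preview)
Your proposal is correct and follows essentially the same route as the paper: both arguments use the vague convergence from \cite[Theorem 13.28(i)]{Kallenberg:1997}, bound each summand via Cauchy--Schwarz $|\langle U(v)e_3,o^\top X(v)\rangle|\le\|X(v)\|$ and assumption \eqref{K:assumption_regular_variation}, recognise $W_n$, and pass to the limit. The only cosmetic difference is that the paper replaces your Portmanteau-style lower-semicontinuity step by a single smooth test function supported in $(\tfrac{1}{2r},\infty]$ (picking up a harmless extra factor $2^\alpha$ in the constant), and it does not address the simultaneous-in-$(r,o)$ null set that you flag at the end.
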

\begin{proof}
	We fix any $o\in\OrthogonalGroup$ and recall that $M_\infty(r,o)$ is the limit of characteristic functions of sums in a triangular array $\sum_{\abs{v}=n}L(v)[X]_v^{oU(v)}$, in which all summands in any generation $\{\abs{v}=n\}$ are independent and infinitesimal, and $\nu^o$ is the L\'evy-measure in the characteristic exponent $\Psi^o$ of $M_\infty(r,o)$.
	By Theorem 13.28, part (i) in \cite{Kallenberg:1997}
	 it follows that
	\begin{equation}\label{K:vague_convergence}
		\sum_{\abs{v}=n}\E\bigg[ f\bigg(L(v)[X]_v^{oU(v)}\bigg)\bigg]\rightarrow \int f(y) \nu^o(\d y)\quad \as
	\end{equation}
	as $n\rightarrow\infty$, for any continuous and compactly supported function $f$ on $\overline{\R}\setminus\{0\}$. By choosing a smooth function $f \le 1$ that dominates $\mathds{1}_{(\frac{1}{r},\infty]}$ and vanishes outside $(\frac{1}{2r},\infty]$, we obtain
	\begin{equation}\label{K:proof_rewriting_vague_convergence}
		\liminf_{n \to \infty} \sum_{\abs{v}=n}\P\bigg([X]_v^{oU(v)}>\tfrac{1}{2rL(v)}\bigg) \ge  \liminf_{n \to \infty} \sum_{\abs{v}=n}\E\bigg[ f\bigg(L(v)[X]_v^{oU(v)}\bigg)\bigg]  \ge \nu^o((\tfrac{1}{r},\infty]),
	\end{equation}
	as $n\rightarrow\infty$, where all inequalities hold almost surely. Let us now take a look on the left side in \eqref{K:proof_rewriting_vague_convergence}. Direct calculations yield
	\begin{align}\label{K:sum_of_tails}
		\nonumber&\sum_{\abs{v}=n}\P\bigg([X]_v^{oU(v)}>\tfrac{1}{2rL(v)}\bigg)
		\leq\sum_{\abs{v}=n}\P\bigg(\abs{[X]_v^{oU(v)}}>\tfrac{1}{2rL(v)}\bigg)\\
		\nonumber&\leq\sum_{\abs{v}=n}\P\bigg(\abs{\scalar{oU(v)e^{}_3,[X]_v}}>\tfrac{1}{2rL(v)}\bigg)
		\leq\sum_{\abs{v}=n}\P(\norm{oU(v)e^{}_3}\norm{ X}>\tfrac{1}{2rL(v)})\\&=\sum_{\abs{v}=n}\P(\norm{X}>\tfrac{1}{2rL(v)})
	\end{align}
	By assumption \eqref{K:assumption_regular_variation}
	\begin{equation}\label{K:proof_applying_regular_variation}
		\sum_{\abs{v}=n}\P(\norm{X}>\tfrac{1}{2rL(v)})\leq\sum_{\abs{v}=n} 2^\alpha  Cr^\alpha L(v)^\alpha.
	\end{equation}
	By taking the limit in \eqref{K:proof_applying_regular_variation} as $n\rightarrow\infty$ and combining the obtained result with \eqref{K:proof_rewriting_vague_convergence} and \eqref{K:sum_of_tails}, we obtain
	\begin{equation*}
		\nu^o((\tfrac{1}{r},\infty))\leq 2^\alpha C r^\alpha W_\infty\quad\as
	\end{equation*}
In the same way (observe that we have taken absolute values in \eqref{K:sum_of_tails} already), we obtain	\begin{equation*}
		\nu^o((-\infty,-\tfrac{1}{r}))\leq C r^\alpha W_\infty\quad\as
	\end{equation*}
\end{proof}

With the help of Lemma above, we are now able to establish an important invariance property of $\nu^{o}$. Recall that $\Support$ denotes the smallest closed subgroup of $\Sim$  that contains $\{(L(v),U(v))_{v \in \V}\}$ with probability 1. We say that a family $(\bar{\nu}^o)_{o \in \OrthogonalGroup}$ of deterministic L\'evy measures on $\R$ is $(\Support,\alpha)$-invariant, if for all $o \in \OrthogonalGroup$ it holds that	\begin{equation}\label{kin:formula_characteristics_levy_triplet_levy_measure_right-hand_side}
	\bar{\nu}^{o}(s^{-1}B)=s^{\alpha}\bar{\nu}^{ou^{-1}}(B)
\end{equation} 
for any $B\subset \Rnot$ and $(s,u)\in\Support$. Equivalently,
\begin{equation}\label{kin:formula_characteristics_levy_triplet_levy_measure_right-hand_side-002}
	\int f(sy) \bar{\nu}^o(\d y)=s^\alpha \int f(y) \bar{\nu}^{ou^{-1}}(\d y)
\end{equation}
for every nonnegative measurable function $f$ on $\Rnot$.

\begin{lem}\label{K:proposition_levy_triplet_nu_characteristics}
	 A family of random L\'evy measures $(\nu^{o})_{o \in \OrthogonalGroup}$ satisfies \eqref{K:levy_triplet_nu_branching_property} if and only if there is a family of deterministic $(\Support,\alpha)$-invariant L\'evy measures $(\bar{\nu}^{o})_{o \in \OrthogonalGroup}$ such that  
	 \begin{equation}\label{K:proof_proposition_levy_triplet_nu_characteristics_deterministic_and_random_nu_connection}
	 	\nu^{o}=W_\infty\bar{\nu}^{o}
	 \end{equation}
almost surely for all $o \in \OrthogonalGroup$. There is a constant $C$ such that for all $r>0$ and $o \in \OrthogonalGroup$,
	\begin{equation*}
	\bar{\nu}^o((\tfrac{1}{r},\infty))\leq C r^\alpha  \quad \text{ and } 	\quad \bar{\nu}^o((-\infty,-\tfrac{1}{r}))\leq C r^\alpha .
\end{equation*}
\end{lem}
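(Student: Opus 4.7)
The \emph{if} direction is immediate: given a deterministic $(\Support,\alpha)$-invariant family $(\bar\nu^o)$ and setting $\nu^o:=W_\infty\bar\nu^o$, the decomposition $W_\infty=\sum_{|v|=n}L(v)^\alpha[W_\infty]_v$ from \eqref{K:martingale_W_n_limit_branching_property} combined with the pointwise invariance $\bar\nu^{oU(v)}(L(v)^{-1}B)=L(v)^\alpha\bar\nu^o(B)$, which is legitimate because $(L(v),U(v))\in\Support$ a.s., immediately reproduces \eqref{K:levy_triplet_nu_branching_property}. All the work lies in the converse.

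\textbf{Mean measure and tail bounds.} Set $\bar\nu^o(B):=\E[\nu^o(B)]$ for Borel $B\subset\Rnot$. By Lemma \ref{K:levy_triplet_nu_boundedness_lemma} and $\E W_\infty=1$,
\begin{equation*}
\bar\nu^o\bigl((1/r,\infty)\bigr)\le Cr^\alpha,\qquad \bar\nu^o\bigl((-\infty,-1/r)\bigr)\le Cr^\alpha,
\end{equation*}
so $\bar\nu^o$ is a deterministic L\'evy measure on $\Rnot$ satisfying the tail estimates claimed in the lemma.

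\textbf{Invariance via Choquet--Deny.} Taking expectations in \eqref{K:levy_triplet_nu_branching_property} at $n=1$, using independence of $[\nu]_j$ from $(R_j,O_j)$ and rewriting the resulting sum via the many-to-one formula of Lemma \ref{K:many-to-one-lemma} (recall $m(\alpha)=1$), we obtain the functional identity
\begin{equation*}
\bar\nu^o(B)\;=\;\E\bigl[\sL_1^{-\alpha}\,\bar\nu^{o\sU_1}(\sL_1^{-1}B)\bigr]\qquad(o\in\OrthogonalGroup,\;B\subset\Rnot).
\end{equation*}
Fix such $o$ and a Borel set $B$ bounded away from $0$, and define $\xi(s,u):=s^{-\alpha}\bar\nu^{ou}(s^{-1}B)$ on $\Support$. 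The tail bound forces $\xi$ to be bounded, while the displayed identity applied with $(o,B)$ replaced by $(ou,s^{-1}B)$ delivers the harmonicity $\xi(s,u)=\E[\xi(s\sL_1,u\sU_1)]$ for all $(s,u)\in\Support$. Lemma \ref{K:Choquet-Deny-lemma} then gives $\xi\equiv\xi(1,\Id)=\bar\nu^o(B)$ on $\Support$, which is exactly the desired invariance $\bar\nu^{ou}(s^{-1}B)=s^\alpha\bar\nu^o(B)$.

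\textbf{Identification $\nu^o=W_\infty\bar\nu^o$ and main obstacle.} With invariance in hand, condition \eqref{K:levy_triplet_nu_branching_property} on $F_n$: since each $[\nu]_v$ is independent of $F_n$,
\begin{equation*}
\E[\nu^o(B)\mid F_n]\;=\;\sum_{|v|=n}\bar\nu^{oU(v)}(L(v)^{-1}B)\;=\;\bar\nu^o(B)\sum_{|v|=n}L(v)^\alpha\;=\;W_n\,\bar\nu^o(B).
\end{equation*}
Letting $n\to\infty$, L\'evy's upward theorem together with the $F_\infty$-measurability of $\nu^o(B)$ forces the left-hand side to converge a.s.\ to $\nu^o(B)$, while the right-hand side converges a.s.\ to $W_\infty\bar\nu^o(B)$, so $\nu^o(B)=W_\infty\bar\nu^o(B)$ a.s. The pointwise-in-$B$ identity is then upgraded to the equality of measures $\nu^o=W_\infty\bar\nu^o$ a.s.\ by applying it simultaneously to a countable generating family (say, intervals with rational endpoints) and invoking outer regularity of L\'evy measures. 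The main technical subtlety is that Lemma \ref{K:Choquet-Deny-lemma} yields constancy of $\xi$ only $\rho$-a.e., while the identification step uses invariance at the random points $(L(v),U(v))$, which may not avoid a preassigned $\rho$-null set; this is resolved by iterating the harmonic identity to $\xi(s,u)=\E[\xi(s\sL_n,u\sU_n)]$ for every $n$ and using that the supports of $\rho^{\ast n}$ exhaust the closed group $\Support$, which in turn allows us to pick a genuinely $(\Support,\alpha)$-invariant version of $\bar\nu$ without altering the mean.
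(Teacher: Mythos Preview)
Your proposal is correct and follows essentially the same route as the paper: define $\bar\nu^o=\E[\nu^o]$, establish $(\Support,\alpha)$-invariance via Choquet--Deny applied to $\xi(s,u)=s^{-\alpha}\bar\nu^{ou}(s^{-1}B)$, and identify $\nu^o=W_\infty\bar\nu^o$ by computing $\E[\nu^o(B)\mid F_n]=W_n\bar\nu^o(B)$ and passing to the limit.

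Two points where the paper is sharper than your sketch. First, your handling of the $\rho$-a.e.\ issue (``supports of $\rho^{*n}$ exhaust $\Support$, so pick an invariant version'') is not quite a proof without continuity of $\xi$, which you do not have. The paper resolves this more directly: applying the many-to-one formula to $\1_{H^c}$, where $H^c$ is the full-$\rho$-measure set on which $\xi$ is constant, gives $\E\sum_{|v|=n}L(v)^\alpha\1_{H^c}(L(v),U(v))=1=\E\sum_{|v|=n}L(v)^\alpha$, whence $(L(v),U(v))\in H^c$ a.s.\ for all $v$ with $L(v)>0$; this is exactly what the conditional computation needs. Second, your assertion that the tail bound alone makes $\bar\nu^o$ a L\'evy measure is incomplete when $\alpha=2$ (the integral $\int_0^1 x^{1-\alpha}\,dx$ diverges); the paper instead deduces $\int(x^2\wedge1)\,\bar\nu^o(dx)<\infty$ \emph{after} the identification, by dividing the a.s.\ relation $\int(x^2\wedge1)\,\nu^o(dx)=W_\infty\int(x^2\wedge1)\,\bar\nu^o(dx)<\infty$ by a realization $W_\infty(\mathbf{l})>0$.
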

\begin{proof} 
	The proof is organized as follows. In step 1 we prove the sufficiency by showing, that $\nu^o$ given by \eqref{K:proof_proposition_levy_triplet_nu_characteristics_deterministic_and_random_nu_connection} satisfies \eqref{K:levy_triplet_nu_branching_property}. The general idea to prove the necessity, which is the more complicated part of the two, is to define a deterministic measure $\bar{\nu}^o:=\E[\nu^o]$ and show, that it satisfies \eqref{K:proof_proposition_levy_triplet_nu_characteristics_deterministic_and_random_nu_connection} and \eqref{kin:formula_characteristics_levy_triplet_levy_measure_right-hand_side} on a set of intervals forming a countable generator of the Borel-$\sigma$-field on $\Rnot$. This will immediately imply, that both these properties hold for any subset of $\Rnot$. 
	In step 2 we introduce the countable generator $S$ of Borel-$\sigma$-field on $\Rnot$ and show, that the map $(r,o)\mapsto \E[ r^{-\alpha}\nu^o((\frac1r,\infty))]$ is constant $\rho$-a.s., where $\rho$ is defined in Lemma \ref{K:many-to-one-lemma}. In step 3 we define $\bar{\nu}^o$ and prove \eqref{K:proof_proposition_levy_triplet_nu_characteristics_deterministic_and_random_nu_connection} and, consequently, \eqref{kin:formula_characteristics_levy_triplet_levy_measure_right-hand_side}. In step 4 we show, that $\bar{\nu}^o$ is a L\'evy measure, thus finishing the proof.
	
	\textbf{Step 1}: 
	Suppose that 	$\nu^{o}=W_\infty\bar{\nu}^{o}$ for a family $(\bar{\nu}_o)_{o \in \OrthogonalGroup}$ satisfying \eqref{kin:formula_characteristics_levy_triplet_levy_measure_right-hand_side}.
	 For any $B\subset\Rnot$, in view of \eqref{K:martingale_W_n_limit_branching_property}, we then have
	\begin{align*}
		&\sum_{\abs{v}=n}[\nu]_v^{oU(v)}(L(v)^{-1}B)=\sum_{\abs{v}=n}[W_\infty\bar{\nu}]_v^{oU(v)}(L(v)^{-1}B)\\ 
		&= \sum_{\abs{v}=n}[W_\infty]_v\bar{\nu}^{oU(v)}(L(v)^{-1}B)
		=\sum_{\abs{v}=n}[W_\infty]_vL(v)^{\alpha}\bar{\nu}^{oU(v)U(v)^{-1}}(B)\\
		&=W_\infty\bar{\nu}^{o}(B)=\nu^{o}(B),\quad \as
	\end{align*}
	
	In the rest of the proof we prove the necessity.
	
	\textbf{Step 2}:
	Assume that the family $(\nu^o)_{o \in \OrthogonalGroup}$ satisfies the equation \eqref{K:levy_triplet_nu_branching_property}  for any measurable set $B\subset\Rnot$. 
	For any $x>0$ denote by $I_x$ and $I_{-x}$ intervals $(\frac{1}{x},\infty)$ and $(-\infty,-\frac{1}{x})$. Note, that the collection of sets $S:=\big\{ I_{r} \, :\, r\in\mathbb{Q}_{> } \big\} \cup \big\{ I_{-r} \, : \, r\in\mathbb{Q}_{< }\big\}$ is a countable generator of the Borel-$\sigma$-field on $\Rnot$. For any $r>0$ and $o\in\OrthogonalGroup$ we define the following functionals.
	\begin{align}\label{kin:proof_functions_phi_and_psi}
		\nonumber\psi_{+}(r,o):=r^{-\alpha}\nu^{o}(I_r),\quad \eta_{+}(r,o):=\E[\psi_{+}(r,o)],\\ \psi_{-}(r,o):=r^{-\alpha}\nu^{o}(I_{-r}),\quad \eta_{-}(r,o):=\E[\psi_{-}(r,o)].
	\end{align}
	Observe, that the integrability of $\psi_\pm$ is a consequence of Lemma \ref{K:levy_triplet_nu_boundedness_lemma}. Using $\E W_\infty=1$, it holds moreover  that $\eta_\pm(r,o) \le C$ for all $r>0$, $o \in \OrthogonalGroup$ for the constant $C$ from Lemma \ref{K:levy_triplet_nu_boundedness_lemma}.
	By \eqref{K:levy_triplet_nu_branching_property} we have
	\begin{align*}
		\psi_+(r,o)=r^{-\alpha}\sum_{\abs{v}=n}[\nu]_v^{oU(v)}(L(v)^{-1}I_r)&\\
		=r^{-\alpha}\sum_{\abs{v}=n}[\nu]_v^{oU(v)}(I_{rL(v)})=&\sum_{\abs{v}=n}L(v)^{\alpha}[\psi_+]_v(rL(v),oU(v)),\quad \as
	\end{align*}
	By taking the expectation and using the many-to-one identity, Lemma \ref{K:many-to-one-lemma}, we obtain
	\begin{align}\label{K:proof_proposition_levy_triplet_nu_characteristics_formula_1}
		\nonumber\eta_+(r,o)&=\E\bigg[\sum_{\abs{v}=n}L(v)^{\alpha}[\psi_+]_v(rL(v),oU(v))\bigg]
		\\
		&=\E\bigg[\sum_{\abs{v}=n}L(v)^{\alpha}\eta_+(rL(v),oU(v))\bigg]=\E\bigg[\eta_+(r\sL_n,o\sU_n)\bigg],
	\end{align}
	where $(\sL_n,\sU_n)_{n\in\N_0}$ is the random walk on $\Sim$, introduced in Lemma \ref{K:many-to-one-lemma}.
	Defining for any fixed $(r,o)\in\Sim$ a function $F_{r,o}:\Support\rightarrow[0,\infty)$ as
	\begin{equation*}
		F_{r,o}(s,u):=\eta_{+}(rs,ou),
	\end{equation*}
	the property \eqref{K:proof_proposition_levy_triplet_nu_characteristics_formula_1} (applied at $r'=rs$, $o'=ou$) translates into 
	\begin{equation*}
		F_{r,o}(s,u)=\E[ F_{r,o}(s\sL_n,u\sU_n)],
	\end{equation*} 
	for any $(s,u)\in\Support$. 
	Thus, $F_{r,o}$ satisfies \eqref{K:Choquet-Deny_functional_equation}, moreover, it is bounded, since $\eta_+$ is bounded. Applying the Choquet-Deny Lemma \ref{K:Choquet-Deny-lemma} to $F_{r,o}$, we infer, that $F_{r,o}$ is constant $\rho$-a.s., were $\rho$ is the increment distribution in the associated random walk $(\sL_n,\sU_n)_{n\in\N_0}$, introduced in the many-to-one lemma (Lemma  \ref{K:many-to-one-lemma}).
	It follows that for any $r>0$, $o \in \OrthogonalGroup$,
	\begin{equation}\label{eq:invariance_eta}
		 \eta_+(r,o)=\eta_+(rs, ou)
	\end{equation}
	for $\rho$-almost every $(s,u) \in \Support$.
	
		\textbf{Step 3}: Let $H^c$ be a set in $\SpaceRO$, such that $\rho(H^c)=\P((\sL_1,\sU_1)\in H^c)=1$ and $\eta_+(r,o)$ is constant on $H^c$. Applying Lemma \ref{K:Choquet-Deny-lemma} to the function $\1_{H^c}(\cdot)$ gives us
	\begin{equation*}
		\E\bigg[\sum_{\abs{v}=n}L(v)^\alpha \1_{H^c}(L(v),U(v))\bigg]=\E[\1_{H^c}(\sL_n,\sU_n)]=\rho(H^c)=1,
	\end{equation*}
	hence, by assumption \eqref{K:assumption_m},
	\begin{equation*}
		\E\bigg[\sum_{\abs{v}=n}L(v)^\alpha \1_{H^c}(L(v),U(v))\bigg]=\E\bigg[\sum_{\abs{v}=n}L(v)^\alpha\bigg].
	\end{equation*}
	The last equality implies 
	\begin{equation}\label{K:proof_proposition_levy_triplet_nu_characteristics_formula_2}
		\P((L(v),U(v))\in H^c~\text{for all}~v:L(v)>0)=1,
	\end{equation}
	for all $n\in\N$.
	By martingale convergence theorem, in view of  \eqref{K:proof_proposition_levy_triplet_nu_characteristics_formula_2} we have
	\begin{align*}				 			  \psi_+(r,o)&=\lim_{n\rightarrow\infty}\E\bigg[\psi_+(r,o)|F_n\bigg]=\lim_{n\rightarrow\infty}\E\bigg[\sum_{\abs{v}=n}L(v)^{\alpha}[\psi_+]_v(rL(v),oU(v))|F_n\bigg]\\
		&=\lim_{n\rightarrow\infty}\sum_{\abs{v}=n}L(v)^{\alpha}\eta_+(rL(v),oU(v))=W_\infty\eta_+(r,o),\quad\as,	
	\end{align*} simultaneously for all $(r,o)$.  
	Recalling the definition of $\psi_+$ in \eqref{kin:proof_functions_phi_and_psi}, we obtain 
	\begin{equation}\label{K:proof_representation_of_nu}
		\nu^{o}(I_r)=W_\infty\eta_+(r,o)r^{\alpha}, \quad\as
	\end{equation}
	We now define by
	\begin{equation}\label{K:levy_triplet_levy_measure_deterministic}
		\bar{\nu}^{o}:=\E[\nu^{o}]
	\end{equation}
	a deterministic measure on $\Rnot$. By \eqref{K:proof_representation_of_nu}, we have
	\begin{equation*}
		\bar{\nu}^{o}(I_r)=\E[\nu^{o}(I_r)]=\E[ W_\infty\eta_+(r,o)r^{\alpha}]=\eta_+(r,o)r^{\alpha},
	\end{equation*}
	which, again in view of \eqref{K:proof_representation_of_nu}, implies
	\begin{equation}\label{K:proof_levy_measure_connect}
		\nu^o(I_r)=W_\infty\bar{\nu}^{o}(I_r),\quad \as
	\end{equation}
	Further, for any $(s,u)\in\Support$ it follows by \eqref{eq:invariance_eta}, that
	\begin{align}\label{kin:proof_nu_property}
		\nu^{o}(s^{-1}I_r)&=\nu^{o}(I_{rs})=W_\infty\eta_+(rs,o)r^{\alpha}s^{\alpha}
		=W_\infty\eta_+(rs,(ou^{-1})u)r^{\alpha}s^{\alpha}\nonumber\\
		&=s^{\alpha}W_\infty\eta_+(r,ou^{-1})r^{\alpha}=s^{\alpha}\nu^{ou^{-1}}(I_r),\quad \as
	\end{align}
	Formulae \eqref{K:proof_levy_measure_connect} and \eqref{kin:proof_nu_property} imply, that relations \eqref{K:proof_proposition_levy_triplet_nu_characteristics_deterministic_and_random_nu_connection} and \eqref{kin:formula_characteristics_levy_triplet_levy_measure_right-hand_side}, respectively, hold for all sets $(I_r)_{r\in\mathbb{Q}_>}$. Replacing $\psi_+$ and $\eta_+$ with $\psi_-$ and $\eta_-$ in the above proof, by same arguments both these properties hold for all sets $(I_{r})_{r\in\mathbb{Q}_<}$, hence for all sets in $S$.  Since $S$ is a countable generator of the Borel-$\sigma$-field on $\Rnot$, we conclude, that \eqref{K:proof_proposition_levy_triplet_nu_characteristics_deterministic_and_random_nu_connection} and \eqref{kin:formula_characteristics_levy_triplet_levy_measure_right-hand_side} hold for all measurable sets in $\Rnot$. 
	
	\textbf{Step 4}:
	We now show, that $\bar{\nu}^o$ is a (deterministic) L\'evy measure on $\R$, for any $o\in\OrthogonalGroup$. Since $\nu^o$ itself is a L\'evy measure, by \eqref{K:proof_proposition_levy_triplet_nu_characteristics_deterministic_and_random_nu_connection} we trivially have $\bar{\nu}^o(\{0\})=0$ and further observe, that
	\begin{equation*}
		\infty > \int (x^2\wedge 1){\nu}^o(\d x)=\int (x^2\wedge 1)W_\infty\bar{\nu}^o(\d x), \quad\text{a.s.}
	\end{equation*}
	W.l.o.g. assume, that the above relation holds on a set $H^c$, such that $\P(\mathbf{L}\in H^c)=1$. Since the set $\{W_\infty>0\}$ carries positive probability, we can always find an $\mathbf{l}\in H^c \cap \{W_\infty>0\}$, such that
	\begin{equation}
		\int (x^2\wedge 1)W_\infty(\mathbf{l})\bar{\nu}^o(\d x)<\infty
	\end{equation} 
	holds. Dividing the last inequality by $W_\infty(\mathbf{l})$ completes the proof.
\end{proof}

\subsection{Determining the random L\'evy-triplet}

Lemma \ref{K:proposition_levy_triplet_nu_characteristics} is a powerful tool, which enables us to further study L\'evy-triplet $(\mu^o,\Theta^o,\nu^o)$. In the next lemma we derive the more convenient form for the integral in the characteristic exponent $\Psi^o$.

Given a family $(\bar{\nu}^o)_{o \in \OrthogonalGroup}$ of $(\Support,\alpha)$-invariant L\'evy measures, define
\begin{align}\label{K:levy_triplet_integral_constants}
	\nonumber &{K}_1(r,o):=\int_{}(\cos(ry)-1)\bar{\nu}^{o}(\d y),\\
	\nonumber &{K}_2(r,o):=\int_{}\sin(ry)\bar{\nu}^{o}(\d y),\\
	&{\gamma}(o):=-\int_{}y\1_{[0,1]}(|y|)\bar{\nu}^{o}(\d y).
\end{align}
Further, denote by 	\begin{equation*}
	\I^o(x):=\int_{} \bigg(e^{ix y}-1-ix y\1_{[0,1]}(y)\bigg)\nu^{o}(\d y)
\end{equation*}
the L\'evy integral that appears  in the random L\'evy-Khintchine exponent $\Psi^o$ in formula \eqref{K:characteristic_exponent_Psi}.

\begin{lem}\label{K:proposition_levy_triplet_integral_characteristics}
	With the notations \eqref{K:levy_triplet_integral_constants}, it holds for any $r\in\Rplus$ and $o\in\OrthogonalGroup$ that
	\begin{equation}\label{K:characteristic_exponent_integral_decomposition}
		\I^o(r)=W_\infty{K}_1(r,o) +iW_\infty{K}_2(r,o) + irW_\infty{\gamma}(o).
	\end{equation}
	The functions $K_j$, $j=1,2$ are $(\Support,\alpha)$-invariant.
\end{lem}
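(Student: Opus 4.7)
The plan is to substitute the factorization $\nu^o = W_\infty \bar{\nu}^o$ from Lemma~\ref{K:proposition_levy_triplet_nu_characteristics} into the definition of $\I^o(r)$. Since $W_\infty$ is a scalar multiplier of the measure, it factors out of the integral, leaving
$$ \I^o(r) = W_\infty \int \bigl(e^{iry} - 1 - iry\mathds{1}_{[0,1]}(|y|)\bigr)\, \bar{\nu}^o(\d y). $$
Expanding $e^{iry} = \cos(ry) + i\sin(ry)$ and collecting terms according to \eqref{K:levy_triplet_integral_constants} then yields the claimed identity \eqref{K:characteristic_exponent_integral_decomposition} directly.

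A subtle point is the absolute convergence of the individual pieces $K_1$, $K_2$, $\gamma$. For $K_1$, the elementary bound $|\cos(ry) - 1| \leq \min(\tfrac{1}{2}r^2 y^2, 2)$ combined with the Lévy-measure property $\int (y^2 \wedge 1)\, \bar{\nu}^o(\d y) < \infty$ suffices. For $K_2$ and $\gamma$ one needs $\int_{0<|y|\leq 1} |y|\, \bar{\nu}^o(\d y) < \infty$; I would deduce this from the tail bound $\bar{\nu}^o((\tfrac{1}{t},\infty)) \leq C t^\alpha$ of Lemma~\ref{K:proposition_levy_triplet_nu_characteristics} (with the symmetric bound on the negative tail) via Fubini, reducing the question to $\int_0^1 t^{-\alpha}\, \d t < \infty$; this is the point where the regime $\alpha<1$ is used. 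On $\{|y|>1\}$ one controls $|\sin(ry)|\leq 1$ by $\bar{\nu}^o(\{|y|>1\})<\infty$.

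For the $(\Support,\alpha)$-invariance of $K_1$ and $K_2$, I would apply the scaling identity \eqref{kin:formula_characteristics_levy_triplet_levy_measure_right-hand_side-002} with the test functions $f(y) = \cos(ry) - 1$ and $f(y) = \sin(ry)$ respectively. For any $(s,u) \in \Support$,
$$ K_1(rs, o) = \int (\cos(rsy) - 1)\, \bar{\nu}^o(\d y) = s^\alpha \int (\cos(ry) - 1)\, \bar{\nu}^{ou^{-1}}(\d y) = s^\alpha K_1(r, ou^{-1}). $$
Replacing $o$ by $ou$ (using that $\Support$ is a group, so $u^{-1}$ is admissible) gives $K_1(r, o) = s^{-\alpha} K_1(rs, ou)$, exactly the required invariance. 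The argument for $K_2$ is identical upon noting that $f(y)=\sin(ry)$ obeys the same scaling through \eqref{kin:formula_characteristics_levy_triplet_levy_measure_right-hand_side-002}.

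The main obstacle I anticipate is the separate integrability of $K_2$ and $\gamma$ when $\alpha\geq 1$: in that regime the imaginary part should stay grouped as the compensated integral $\int(\sin(ry) - ry\mathds{1}_{[0,1]}(|y|))\, \bar{\nu}^o(\d y)$, since neither $\int \sin(ry)\,\bar{\nu}^o(\d y)$ nor $\int y\mathds{1}_{[0,1]}(|y|)\,\bar{\nu}^o(\d y)$ need converge on its own. The decomposition as written is thus unambiguous for $\alpha<1$, while for $\alpha\in[1,2)$ an adjustment of the truncation in \eqref{K:levy_triplet_integral_constants} (together with a compensating shift in $\gamma$) would be needed to obtain an analogous splitting.
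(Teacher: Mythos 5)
Your proof matches the paper's almost verbatim: the paper likewise substitutes $\nu^o=W_\infty\bar{\nu}^o$, expands $e^{iry}=\cos(ry)+i\sin(ry)$ and collects terms according to \eqref{K:levy_triplet_integral_constants}, and derives the $(\Support,\alpha)$-invariance of $K_1,K_2$ from the scaling identity \eqref{kin:formula_characteristics_levy_triplet_levy_measure_right-hand_side-002} exactly as you do. Your additional care about the separate absolute convergence of $K_2$ and $\gamma$ --- in particular the observation that splitting the compensated imaginary part into these two pieces is only unproblematic for $\alpha<1$ --- is a point the paper's proof passes over in silence, and it is consistent with the fact that the paper only proves boundedness of $\gamma$ for $\alpha\in(0,1)$ in Lemma \ref{K:proposition_uniform_boundedness_of_gamma_o}.
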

\begin{proof}
	Fix any $o\in\OrthogonalGroup$. For any $r\in\Rplus$ we have
	\begin{equation}
		\I^o(r)=\int_{} \bigg(\cos(ry) + i\sin(ry) -1-ir y\1_{[0,1]}(y)\bigg)W_\infty\bar{\nu}^{o}(\d y),
	\end{equation}
	which, in view of notations in \eqref{K:levy_triplet_integral_constants}, yields \eqref{K:characteristic_exponent_integral_decomposition}. Further, by Proposition \ref{K:proposition_levy_triplet_nu_characteristics}, in particular the invariance property \eqref{kin:formula_characteristics_levy_triplet_levy_measure_right-hand_side-002}, for any $(s,u)\in\Support$ we have
	\begin{equation*}
		{K}_2(s,o)=\int_{}\sin(sy)\bar{\nu}^{o}(\d y)=
		s^{\alpha}\int_{}\sin(y)\bar{\nu}^{ou^{-1}}(\d y)=s^{\alpha}{K}_2(1,ou^{-1})
	\end{equation*}
	and
	\begin{equation*}
		{K}_1(s,o)=\int_{}(\cos(sy)-1)\bar{\nu}^{o}(\d y)=s^{\alpha}\int_{}(\cos(y)-1)\bar{\nu}^{ou^{-1}}(\d y)=s^{\alpha}{K}_1(1,ou^{-1}).
	\end{equation*}
\end{proof}

We proceed to show that, depending on the value of $\alpha$, certain components in the random L\'evy-triplet may vanish.
 We start with uniform boundedness of $\gamma$ in case $\alpha\in(0,1)$, from which we will infer that both $\mu^{o}$ and $\Theta^{o}$ vanish for $\alpha \in (0,1)$.
 
\begin{lem}\label{K:proposition_uniform_boundedness_of_gamma_o}
	Assume $\alpha\in(0,1)$. Then the function $\gamma$, defined in \eqref{K:levy_triplet_integral_constants}, is bounded uniformly in $\OrthogonalGroup$.
\end{lem}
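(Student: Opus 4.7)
The plan is to obtain the bound directly from the uniform tail estimate for $\bar{\nu}^o$ already established in Lemma \ref{K:proposition_levy_triplet_nu_characteristics}. The essential point is that, with tails decaying like $t^{-\alpha}$, the function $|y|$ is integrable near the origin against $\bar{\nu}^o$ precisely when $\alpha<1$, and the resulting bound depends only on the constant $C$ from Lemma \ref{K:proposition_levy_triplet_nu_characteristics}, not on $o$.

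Concretely, first I would observe
\begin{equation*}
  \abs{\gamma(o)} \le \int_{\{|y|\le 1\}} \abs{y}\, \bar{\nu}^o(\d y).
\end{equation*}
Next I would rewrite this by the layer-cake formula (Fubini):
\begin{equation*}
  \int_{\{|y|\le 1\}} \abs{y}\, \bar{\nu}^o(\d y) = \int_0^1 \bar{\nu}^o\bigl(\{t < |y| \le 1\}\bigr)\, \d t \le \int_0^1 \bar{\nu}^o\bigl(\{|y| > t\}\bigr)\, \d t.
\end{equation*}
By Lemma \ref{K:proposition_levy_triplet_nu_characteristics}, applied to both tails, $\bar{\nu}^o(\{|y|>t\}) \le 2C t^{-\alpha}$ for all $t>0$, with $C$ independent of $o$.

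Plugging in this bound yields
\begin{equation*}
  \abs{\gamma(o)} \le 2C \int_0^1 t^{-\alpha}\, \d t = \frac{2C}{1-\alpha},
\end{equation*}
which is finite precisely because $\alpha \in (0,1)$, and is independent of $o \in \OrthogonalGroup$. There is no real technical obstacle; the role of the hypothesis $\alpha<1$ is exactly to guarantee convergence of $\int_0^1 t^{-\alpha}\, \d t$, and the uniformity is immediate once the tail bound is uniform in $o$.
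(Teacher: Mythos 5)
Your proof is correct, and it takes a genuinely different (and shorter) route than the paper's. The paper splits $\gamma(o)$ into positive and negative parts, decomposes $(0,1]$ into the annuli $(s^{-n-1},s^{-n}]$ for some $(s,u)\in\Support$ with $s>1$, and uses the $(\Support,\alpha)$-invariance \eqref{kin:formula_characteristics_levy_triplet_levy_measure_right-hand_side-002} to pull each annulus back to the fixed scale $(\tfrac1s,1]$ at the cost of a factor $s^{n(\alpha-1)}$; the resulting geometric series converges precisely because $\alpha<1$, and the tail bound of Lemma \ref{K:proposition_levy_triplet_nu_characteristics} is invoked only at the single scale $\tfrac1s$. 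You instead bypass the invariance property entirely: the final assertion of Lemma \ref{K:proposition_levy_triplet_nu_characteristics} already gives the tail bound $\bar{\nu}^o(\{|y|>t\})\le 2Ct^{-\alpha}$ uniformly in $o$ and for \emph{all} $t>0$, so the layer-cake identity reduces the claim to the convergence of $\int_0^1 t^{-\alpha}\,\d t$. This is cleaner and makes transparent that the only inputs are the uniform tail estimate and $\alpha<1$; the paper's argument, by contrast, also exhibits how the invariance structure alone (tail control at one scale plus the scaling relation) suffices, which is the mechanism reused later in the $\alpha=2$ part of the proof of Theorem \ref{K:proposition_main_result}. One small point of care in your version: the layer-cake (Tonelli) step is legitimate here because $\bar{\nu}^o$ is $\sigma$-finite on $\Rnot$ and the integrand is nonnegative, so nothing is lost even though $\bar{\nu}^o$ may be infinite near the origin.
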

\begin{proof}	
	Observe that
	\begin{align}\label{K:proof_proposition_levy_triplet_integral_characteristics_gamma_two_modules}
		\abs{{\gamma}(o)}=\bigg|\int_{}y\1_{[0,1]}(|y|)\bar{\nu}^{o}(\d y)\bigg|\leq \bigg|\int_{(0,1]}y\bar{\nu}^{o}(\d y)\bigg|+\bigg|\int_{[-1,0)}y\bar{\nu}^{o}(\d y)\bigg|.
	\end{align}
	We will treat the two summands in the right-hand side of  \eqref{K:proof_proposition_levy_triplet_integral_characteristics_gamma_two_modules} separately. We use \eqref{kin:formula_characteristics_levy_triplet_levy_measure_right-hand_side} and the fact, that $\bar{\nu}^{o}(I_s)$ is finite for any $s\in\Rplus$ and $o\in\OrthogonalGroup$, which is a direct consequence of Lemma \ref{K:levy_triplet_nu_boundedness_lemma}. As a consequence of $\eqref{K:assumption_m}$, we may assume that there exists $s\neq1$ and $u\in\OrthogonalGroup$, such that $(s,u)\in\Support$. By the group property of $\Support$ we may further assume that $s>1$. Direct calculations, using \eqref{kin:formula_characteristics_levy_triplet_levy_measure_right-hand_side-002}, yield
	\begin{align}\label{K:proof_proposition_levy_triplet_integral_characteristics_formula_first}
		\nonumber\bigg|\int_{(0,1]}y\bar{\nu}^{o}(\d y)\bigg|
		\nonumber&\leq\sum_{n\ge 0}\bigg|\int \mathds{1}_{(s^{-n-1},s^{-n}]}(y)\,  y\bar{\nu}^{o}(\d y)\bigg| = \sum_{n\ge 0}\bigg|\int \mathds{1}_{(s^{-1},1]}(s^ny)\,  s^{-n} (s^ny)\bar{\nu}^{o}(\d y)\bigg|
		\\
		\nonumber&=\sum_{n\ge 0}\bigg|s^{n\alpha}\int_{(\frac{1}{s},1]} s^{-n}y\bar{\nu}^{o(u)^{-n}}(\d y)\bigg|
		=\sum_{n\ge 0}s^{n(\alpha-1)}\bigg|\int_{(\frac{1}{s},1]} y\bar{\nu}^{o(u)^{-n}}(\d y)\bigg|
		\\
		\nonumber&\leq\sum_{n\ge 0}s^{n(\alpha-1)}\bigg|\int_{(\frac{1}{s},1]}\bar{\nu}^{o(u)^{-n}}(\d y)\bigg|
		\leq\sum_{n\ge 0}s^{n(\alpha-1)}\abs{\bar{\nu}^{o(u)^{-n}}((\tfrac{1}{s},\infty)}
		\\
		&\leq\sum_{n\ge 0}(s^{\alpha-1})^nCs^{\alpha}=:\widetilde{C}<\infty,
	\end{align}	
	where we have used Lemma \ref{K:proposition_levy_triplet_nu_characteristics} in the penultimate line. The constant $\widetilde{C}$ is finite due to $s>1$ and $\alpha<1$. The second summand in \eqref{K:proof_proposition_levy_triplet_integral_characteristics_gamma_two_modules} is treated analogously.
\end{proof}

We proceed by proving the uniform boundedness of $\mu^o$ in case $\alpha\in(0,1)$.
\begin{lem}\label{K:proposition_uniform_boundedness_of_mu_o}
	 The random shift $\mu^o$ in the L\'evy-triplet is an $F_\infty$-measurable function. If  $\alpha\in(0,1)$, then 
	 $$ \sup_{o \in \OrthogonalGroup} |\mu^o| < \infty \quad \as$$
\end{lem}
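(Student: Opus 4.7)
The strategy is to isolate $\mu^o$ from the imaginary part of the L\'evy-Khintchine representation of $\Psi^o$, combined with the explicit decomposition of the L\'evy integral from Lemma \ref{K:proposition_levy_triplet_integral_characteristics}. Since $\Theta^o$ is real, taking imaginary parts in the L\'evy-Khintchine formula and substituting \eqref{K:characteristic_exponent_integral_decomposition} yields, for every $r > 0$,
\[
    \mathrm{Im}(\Psi^o(r)) = r\mu^o + W_\infty K_2(r,o) + r W_\infty \gamma(o),
\]
so that setting $r = 1$ gives the identity
\[
    \mu^o = \mathrm{Im}(\Psi^o(1)) - W_\infty K_2(1,o) - W_\infty \gamma(o),
\]
which will be the basis for both the measurability claim and the uniform bound.

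For $F_\infty$-measurability, note that $W_\infty$ is $F_\infty$-measurable and that $K_2(1,\cdot)$, $\gamma(\cdot)$ are deterministic (as $\bar{\nu}^o$ is deterministic by Lemma \ref{K:proposition_levy_triplet_nu_characteristics}). For $\Psi^o(1)$, we use that for each $\mathbf{l} \in N^c$ the map $r \mapsto M_\infty(r, o, \mathbf{l})$ is continuous with $M_\infty(0, o, \mathbf{l}) = 1$ and nonvanishing (as the characteristic function of an infinitely divisible law, by Lemma \ref{lem:Minfintelydivisible}); hence it admits a unique continuous lift $r \mapsto \Psi^o(r, \mathbf{l})$ with $\Psi^o(0, \mathbf{l}) = 0$. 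For $n$ sufficiently large this lift at $r=1$ equals the telescoping sum $\sum_{k=1}^{n}\mathrm{Log}\bigl(M_\infty(k/n, o, \mathbf{l})/M_\infty((k-1)/n, o, \mathbf{l})\bigr)$ of principal-branch logarithms, exhibiting $\Psi^o(1)$ as a pointwise (eventually constant) limit of $F_\infty$-measurable functions.

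For the uniform boundedness when $\alpha \in (0,1)$, we bound each of the three terms on the right-hand side. First, $|\gamma(o)|$ is bounded uniformly in $o$ by Lemma \ref{K:proposition_uniform_boundedness_of_gamma_o}. Second, splitting $K_2(1,o) = \int_{|y|\le 1}\sin(y)\bar{\nu}^o(\mathrm{d}y) + \int_{|y|>1}\sin(y)\bar{\nu}^o(\mathrm{d}y)$ and applying $|\sin y| \le |y|$ to the first integral together with Fubini yields the bound $\int_0^1 \bar{\nu}^o(\{|y|>t\})\,\mathrm{d}t \le \int_0^1 2Ct^{-\alpha}\,\mathrm{d}t = 2C/(1-\alpha)$ (finite precisely because $\alpha < 1$), while the second is bounded by $\bar{\nu}^o(\{|y|>1\}) \le 2C$ via Lemma \ref{K:proposition_levy_triplet_nu_characteristics}. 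Finally, for fixed $\mathbf{l} \in N^c$, the mapping $o \mapsto \Psi^o(1, \mathbf{l})$ is continuous on the compact group $\OrthogonalGroup$: neighboring $o, o'$ yield paths $r \mapsto M_\infty(r, o, \mathbf{l})$ and $r \mapsto M_\infty(r, o', \mathbf{l})$ that are uniformly close on $[0,1]$ and stay uniformly bounded away from $0$ (by joint continuity and nonvanishing of $M_\infty$), so the corresponding continuous lifts are uniformly close. Compactness then gives $\sup_o|\Psi^o(1, \mathbf{l})| < \infty$. Combining the three uniform bounds with $W_\infty < \infty$ a.s.\ yields $\sup_o|\mu^o| < \infty$ a.s. The main obstacle is the careful handling of the logarithm branch for $\Psi^o(1)$: both the measurability (via the telescoping construction) and the uniform bound (via the continuity-of-lift argument on compact $\OrthogonalGroup$) rely on it, while the bounds on $K_2$ and $\gamma$ are routine once $\alpha < 1$ is used.
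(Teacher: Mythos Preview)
Your proof is correct but follows a genuinely different route from the paper's. The paper works directly with the triangular-array representation: invoking \cite[Theorem 13.28(iii)]{Kallenberg:1997}, it identifies the truncated means $\sum_{|v|=n}\E[L(v)[X]_v^{oU(v)};\,|L(v)[X]_v^{oU(v)}|\le h]$ as converging a.s.\ to $b^o(h)=\mu^o-\int_{h<|x|\le 1}x\,\nu^o(\d x)$, which immediately yields $F_\infty$-measurability. For the bound, the paper estimates these truncated means directly via assumption \eqref{K:assumption_regular_variation}, obtaining $|b^o(h)|\le W_\infty C\,h^{1-\alpha}/(1-\alpha)$, and controls the remaining integral by $W_\infty|\gamma(o)|$ using Lemma \ref{K:proposition_uniform_boundedness_of_gamma_o}. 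You instead extract $\mu^o$ algebraically from $\mathrm{Im}\,\Psi^o(1)$ via the decomposition of Lemma \ref{K:proposition_levy_triplet_integral_characteristics}, then bound $K_2(1,o)$ and $\gamma(o)$ through the tail estimates of Lemma \ref{K:proposition_levy_triplet_nu_characteristics}, and finally bound $\sup_o|\Psi^o(1,\mathbf{l})|$ by a soft compactness/continuity argument exploiting the joint continuity of $M_\infty$ established in Proposition \ref{prop:SimultaneousConvergence}. The paper's approach has the advantage of producing an explicit bound of the form $|\mu^o|\le C'W_\infty$, which is structurally cleaner and in line with how the other components are handled; your approach is more self-contained once Lemma \ref{K:proposition_levy_triplet_integral_characteristics} is available and avoids re-entering the triangular-array machinery, at the cost of a somewhat delicate branch-of-logarithm argument to secure both measurability and continuity of $o\mapsto\Psi^o(1)$.
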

\begin{proof}
	Choose any $h\in(0,1)$, such that $\bar{\nu}^o(\{\abs{x}=h\})=0$ and hence, by Lemma \ref{K:proposition_levy_triplet_nu_characteristics}, ${\nu}^o(\{\abs{x}=h\})=0$ almost surely.  By part (iii) in Theorem 13.28 in \cite{Kallenberg:1997}, we have 
	\begin{equation}\label{eq:boh as limit}
		\sum_{\abs{v}=n}\E\bigg[L(v)[X]_v^{oU(v)};~\abs{L(v)[X]_v^{oU(v)}}\leq h\bigg]\rightarrow b^o(h) \quad \text{ a.s.},
	\end{equation}
	where $b^o(h)$ is defined as
	\begin{equation}\label{K:Levy-triplet_b^o(h)}
		b^o(h):=\mu^o-\int_{h<\abs{x}\leq 1}x\nu^o(\d x). 
	\end{equation}
	Both equations together yield in particular the measurability of first $b^o(h)$ and hence of $\mu^o$. By an appeal to Proposition \ref{prop:SimultaneousConvergence}, it further holds that the family $\{ \mu_o \, : \, o \in \OrthogonalGroup\}$ is well defined simultaneously outside the set $N$ of measure zero.
	
	Concerning the boundedness for $\alpha<1$, it  follows from \eqref{eq:boh as limit}, that
	\begin{equation}\label{K:proof_proposition_uniform_boundedness_of_mu_o_convergence_of_modules}
		\biggabs{\sum_{\abs{v}=n}\E\bigg[L(v)[X]_v^{oU(v)};~\abs{L(v)[X]_v^{oU(v)}}\leq h\bigg]}\rightarrow \abs{b^o(h)} \quad \text{ a.s.}.
	\end{equation}
	For the left side of \eqref{K:proof_proposition_uniform_boundedness_of_mu_o_convergence_of_modules}, we write
	\begin{align*}
		&\biggabs{\sum_{\abs{v}=n}\E\bigg[L(v)[X]_v^{oU(v)};~\abs{L(v)[X]_v^{oU(v)}}\leq h\bigg]}
		\leq\sum_{\abs{v}=n}\E\bigg[\abs{L(v)[X]_v^{oU(v)}}\1_{[0,h]}(\abs{L(v)[X]_v^{oU(v)}})\bigg]
		\\
		&\le\sum_{\abs{v}=n}\int_{0}^{h}\P\bigg(\abs{L(v)[X]_v^{oU(v)}}>x\bigg)\d x
		=\sum_{\abs{v}=n}\int_{0}^{h}\P\bigg(\abs{\scalar{oU(v)e^{}_3,X}}>\tfrac{x}{L(v)}\bigg)\d x
		\\
		&\leq\sum_{\abs{v}=n}\int_{0}^{h}\P\bigg(\norm{oU(v)e^{}_3}\norm{X}>\tfrac{x}{L(v)}\bigg)\d x
		\leq\sum_{\abs{v}=n}\int_{0}^{h}\P\bigg(\norm{X}>\tfrac{x}{L(v)}\bigg)\d x
		\\
		&\leq \sum_{\abs{v}=n}\int_{0}^{h}CL(v)^{\alpha}x^{-\alpha}\d x=\bigg(\sum_{\abs{v}=n}L(v)^{\alpha}\bigg)C\frac{h^{1-\alpha}}{1-\alpha},
	\end{align*}
	where the last inequality holds by assumption \eqref{K:assumption_regular_variation}. Sending $n\rightarrow\infty$, in view of \eqref{K:martingale_W_n_limit_branching_property}, we obtain
	\begin{equation*}
		\lim_{n\rightarrow\infty}\biggabs{\sum_{\abs{v}=n}\E\bigg[L(v)[X]_v^{oU(v)};~\abs{L(v)[X]_v^{oU(v)}}\leq h\bigg]}\leq W_\infty C\frac{h^{1-\alpha}}{1-\alpha} \quad \text{a.s.},
	\end{equation*}
	which, combined with \eqref{K:proof_proposition_uniform_boundedness_of_mu_o_convergence_of_modules}, yields
	\begin{equation}\label{K:proof_proposition_uniform_boundedness_of_mu_o_boundedness_1}
		\abs{b^o(h)}\leq W_\infty C\frac{h^{1-\alpha}}{1-\alpha} \quad \text{a.s.}
	\end{equation}
	for any $h>0$. For the integral in \eqref{K:Levy-triplet_b^o(h)} we have
	\begin{equation}\label{K:proof_proposition_uniform_boundedness_of_mu_o_boundedness_2}
		\biggabs{\int_{h<\abs{x}\leq 1}x\nu^o(\d x)}\leq \abs{W_\infty{\gamma}(o)}\leq W_\infty\widetilde{C} \quad \text{a.s.},
	\end{equation}
	by Lemma \ref{K:proposition_levy_triplet_integral_characteristics}. Combining \eqref{K:proof_proposition_uniform_boundedness_of_mu_o_boundedness_1} and \eqref{K:proof_proposition_uniform_boundedness_of_mu_o_boundedness_2}, we obtain
	\begin{equation*}
		\abs{\mu^o}\leq \abs{b^o(h)} + \biggabs{\int_{h<\abs{x}\leq 1}x\nu^o(\d x)}\leq \abs{W_\infty{\gamma}(o)}\leq W_\infty C\frac{h^{1-\alpha}}{1-\alpha}+W_\infty \widetilde{C} \quad \text{a.s.}.
	\end{equation*}
	Since both constants do not depend on $o$ and are finite, this implies the uniform boundedness of $\mu^o$ almost surely in $\OrthogonalGroup$.
\end{proof}

We now prove the uniform boundedness of $\Theta^o$ for both $\alpha\in(0,1)$ and $\alpha\in(1,2)$. 
\begin{lem}\label{K:proposition_uniform_boundedness_of_theta_o}
	 The random variance $\Theta^o$  is an $F_\infty$-measurable function. For $\alpha\in(0,1)\cup(1,2)$, it holds
	 $$ \sup_{o \in \OrthogonalGroup} |\Theta^o| < \infty \quad \as.$$
\end{lem}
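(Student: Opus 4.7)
The plan is to mimic, mutatis mutandis, the strategy of Lemma \ref{K:proposition_uniform_boundedness_of_mu_o}, replacing the truncated first-moment characterisation of $\mu^o$ by the truncated variance characterisation of $\Theta^o$. Concretely, fix $h>0$ which is a continuity point of $\bar{\nu}^o$. Part (ii) of Theorem 13.28 in \cite{Kallenberg:1997}, applied to the triangular array $\{L(v)\scalar{oU(v)e^{}_3,[X]_v} : |v|=n\}_{n\ge 1}$, yields the almost sure convergence
\begin{equation*}
\sum_{\abs{v}=n}\Var\bigg[L(v)[X]_v^{oU(v)};~\abs{L(v)[X]_v^{oU(v)}}\leq h\bigg] \to a^o(h):=\Theta^o + \int_{\abs{x}\le h}x^2\nu^o(\d x).
\end{equation*}
Together with Proposition \ref{prop:SimultaneousConvergence} this gives the $F_\infty$-measurability of $a^o(h)$ (simultaneously in $o$ outside the global null set $N$); combined with the already established $F_\infty$-measurability of $\nu^o$ from Lemma \ref{K:proposition_levy_triplet_nu_characteristics}, this yields the first assertion that $\Theta^o$ is $F_\infty$-measurable.

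For the uniform boundedness, I bound the two summands of $a^o(h)-\Theta^o$ separately, each by a constant multiple of $W_\infty$ with a constant independent of $o$. Using $\Var[\xi;\abs{\xi}\le h]\le \E[\xi^2;\abs{\xi}\le h]$, Fubini, Cauchy-Schwarz ($|\scalar{oU(v)e^{}_3,[X]_v}|\le \norm{[X]_v}$), and assumption \eqref{K:assumption_regular_variation}, I would obtain
\begin{equation*}
\sum_{\abs{v}=n}\Var\bigg[L(v)[X]_v^{oU(v)};~\abs{L(v)[X]_v^{oU(v)}}\leq h\bigg]\le \sum_{\abs{v}=n} \int_0^h 2t\, \P(L(v)\norm{X}>t)\,\d t \le \frac{2Ch^{2-\alpha}}{2-\alpha}\sum_{\abs{v}=n}L(v)^\alpha,
\end{equation*}
and passing to the limit $n\to\infty$ via the martingale convergence \eqref{K:martingale_W_n_limit_branching_property} gives $a^o(h)\le \tfrac{2Ch^{2-\alpha}}{2-\alpha}W_\infty$ uniformly in $o$ almost surely. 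Note that $a^o(h)\ge 0$ as it is a limit of variances, and the finiteness of $\int_0^h 2t\cdot t^{-\alpha}\d t$ is precisely where the restriction $\alpha<2$ enters.

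For the integral $\int_{\abs{x}\le h}x^2\nu^o(\d x)=W_\infty\int_{\abs{x}\le h}x^2\bar{\nu}^o(\d x)$, I invoke Lemma \ref{K:proposition_levy_triplet_nu_characteristics} to express $\nu^o=W_\infty\bar{\nu}^o$ and use the deterministic tail bound $\bar{\nu}^o((y,\infty))\le Cy^{-\alpha}$ (and the analogous one for the negative half line); Fubini then gives $\int_{(0,h]}x^2\bar{\nu}^o(\d x)=\int_0^h 2t\,\bar{\nu}^o((t,h])\,\d t\le \tfrac{2Ch^{2-\alpha}}{2-\alpha}$, and similarly on $[-h,0)$. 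Combining the two bounds, $|\Theta^o|=|a^o(h)-\int_{\abs{x}\le h}x^2\nu^o(\d x)|\le C'(h,\alpha)W_\infty$ with $C'(h,\alpha)$ independent of $o$, yielding $\sup_{o\in\OrthogonalGroup}|\Theta^o|<\infty$ almost surely. The only delicate point — and the one I would check with care — is to ensure, as in the treatment of $\mu^o$, that the convergence and bounds from Kallenberg's theorem can be arranged to hold simultaneously over $o\in\OrthogonalGroup$ on the global null set from Proposition \ref{prop:SimultaneousConvergence}; but this is precisely the setting in which the proof of Lemma \ref{K:proposition_uniform_boundedness_of_mu_o} was carried out, and the same considerations apply verbatim here.
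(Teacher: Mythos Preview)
Your proposal is correct and follows essentially the same approach as the paper: both invoke part (ii) of Kallenberg's Theorem 13.28 to represent $\Theta^o$ through the limit $a^o(h)$ of truncated variances, bound variances by second moments, and then use the Cauchy--Schwarz inequality together with assumption \eqref{K:assumption_regular_variation} and the convergence $\sum_{|v|=n}L(v)^\alpha\to W_\infty$ to get a bound of the form $C(h,\alpha)W_\infty$ uniformly in $o$. The only cosmetic difference is that you write $a^o(h)=\Theta^o+\int_{|x|\le h}x^2\nu^o(\d x)$ (the standard Kallenberg form), whereas the paper records $a^o(h)=\Theta^o+\int_{h<|x|\le 1}x^2\nu^o(\d x)$; your integral term then needs the Fubini argument you outline, while the paper's is bounded directly by the tail estimate --- but both routes are equally valid and lead to the same conclusion.
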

\begin{proof}
	As in the previous lemma, we choose $h>0$, such that $\bar{\nu}^o(\{\abs{x}=h\})=0$ and hence, by Lemma \ref{K:proposition_levy_triplet_nu_characteristics}, ${\nu}^o(\{\abs{x}=h\})=0$ a.s.  By part (ii) in Theorem 13.28 in \cite{Kallenberg:1997}, 
	\begin{equation}\label{K:proof_proposition_uniform_boundedness_of_theta_o_kallenberg}
		\sum_{\abs{v}=n}\V\bigg[L(v)[X]_v^{oU(v)};~\abs{L(v)[X]_v^{oU(v)}}\leq h\bigg]\rightarrow a^o(h) \quad \text{a.s.},
	\end{equation}
	as $n\rightarrow\infty$, where $a^o(h)$ is defined as
	\begin{equation}\label{K:Levy-triplet_a^o(h)}
		a^o(h):=\Theta^o+\int_{h<\abs{x}\leq 1}x^2\nu^o(\d x).
	\end{equation}
	This proves in particular the $F_\infty$-measurability of $\Theta^o$.

	By Lemma \ref{K:proposition_levy_triplet_nu_characteristics}, the integral in \eqref{K:Levy-triplet_a^o(h)} is bounded by $C h^{-\alpha}$ uniformly in $\OrthogonalGroup$.
	Boundedness of $\Theta^{o}$ will hence be deduced from boundedness of $a^o(h)$. Therefore, we want to bound the left hand side of \eqref{K:proof_proposition_uniform_boundedness_of_theta_o_kallenberg} from above. Obviously, we can bound variances by the second moment. Hence we consider
	\begin{align*}
		&\sum_{\abs{v}=n}\E\bigg[(L(v)[X]_v^{oU(v)}\1_{[0,h]}(\abs{L(v)[X]_v^{oU(v)}}))^2\bigg]
		\\
		&\leq\sum_{\abs{v}=n}\int_{0}^{h^2}\P\bigg((L(v)[X]_v^{oU(v)})^2>x\bigg)\d x
		\\
		&\leq\sum_{\abs{v}=n}\int_{0}^{h^2}\P\bigg(([X]_v^{oU(v)})^2>\tfrac{x}{L(v)^{2}}\bigg)\d x
		=\sum_{\abs{v}=n}\int_{0}^{h^2}\P\bigg(\abs{[X]_v^{oU(v)}}>\tfrac{\sqrt{x}}{L(v)}\bigg)\d x
		\\
		&=\sum_{\abs{v}=n}\int_{0}^{h^2}\P\bigg(\abs{\scalar{oU(v)e^{}_3,X}}>\tfrac{\sqrt{x}}{L(v)}\bigg)\d x
		\leq\sum_{\abs{v}=n}\int_{0}^{h^2}\P\bigg(\norm{X}>\tfrac{\sqrt{x}}{L(v)}\bigg)\d x
		\\
		&\leq \sum_{\abs{v}=n}\int_{0}^{h^2}CL(v)^{\alpha}x^{-\frac{\alpha}{2}}\d x=\bigg(\sum_{\abs{v}=n}L(v)^{\alpha}\bigg)C\frac{h^{2-\alpha}}{1-\frac{\alpha}{2}},
	\end{align*}
	where the last inequality holds by assumption \eqref{K:assumption_regular_variation}. Sending $n\rightarrow\infty$, in view of \eqref{K:martingale_W_n_limit_branching_property}, we obtain
	\begin{equation*}
		\lim_{n\rightarrow\infty}\sum_{\abs{v}=n}\E\bigg[(L(v)[X]_v^{oU(v)}\1_{[0,h]}(\abs{L(v)[X]_v^{oU(v)}}))^2\bigg]\leq W_\infty C\frac{h^{2-\alpha}}{1-\frac{\alpha}{2}},
	\end{equation*}
	which yields
	\begin{equation}\label{K:proof_proposition_uniform_boundedness_of_theta_o_boundedness_1}
		a^o(h)\leq W_\infty C\frac{h^{1-\alpha}}{1-\alpha}.
	\end{equation}
	Together with \eqref{K:Levy-triplet_a^o(h)}, this immediately implies that $\Theta^o$ is almost surely uniformly bounded in $\OrthogonalGroup$.
\end{proof} 

\subsection{Proof of Theorem \ref{K:proposition_main_result}}\label{subsection:proofMainresult}
We now have all ingredients to prove Theorem \ref{K:proposition_main_result}. We start by proving the converse inclusion, that is, every solution is of the form \eqref{eq:structure of solutions}. 

A short proof for the direct inclusion  follows immediately afterwards.

\begin{proof}[Proof of the converse inclusion in Theorem \ref{K:proposition_main_result}]
 Let $\phi$ be the characteristic function of a random vector on $\R^3$ that is a solution to \eqref{K:stationary_equation}. Recall that we consider $\alpha>0$ with $\alpha \neq 1$. By Remark \ref{rem:represenation:phi:psi}, we have for any $r\in\Rplus$ and $o\in\OrthogonalGroup$
	\begin{equation*}
		\phi(roe_3)=\E[ e^{\Psi^o(r)}].
	\end{equation*}
	By Lemma \ref{K:proposition_levy_triplet_integral_characteristics},  we have
	\begin{equation}\label{K:proof_main_result_psi_reminder}
		\Psi^{o}(r)=ir\mu^{o}-\frac{1}{2}r^2\Theta^{o}+W_\infty{K}_1(r,o)+iW_\infty{K}_2(r,o)+irW_\infty{\gamma}(o).
	\end{equation}
Using the invariance formulae obtained, for any $(s,u) \in \Support$ we have
	\begin{equation}\label{K:proof_main_result_psi_expanded}
		\Psi^o(rs)=irs\mu^o-\frac{1}{2}r^2s^2\Theta^{o}+s^{\alpha}W_\infty {K}_1(r,ou^{-1})+is^{\alpha}W_\infty{K}_2(r,ou^{-1})+irsW_\infty{\gamma}(o).
	\end{equation}
	The proof is organized as follows. We first prove the representation \eqref{eq:structure of solutions} by using the branching relation \eqref{K:characterisric_exponent_Psi_branching_property} satisfied by $\Psi^{o}$ and separating the components of $\Psi^{o}$ to obtain equations for $Y$, $V$ and $K$, to be defined below as well.
	Then we turn to the separate components and prove that they vanish if $\alpha$ takes values in specified ranges.

	\textbf{Step 1}.
	In this step, we are going to prove for any $\alpha \neq 1$ the representation  \eqref{eq:structure of solutions} and show, that $K$ vanishes a.s. for $\alpha=2$.

By \eqref{K:characterisric_exponent_Psi_branching_property} we have for any $r>0$, $s \in \G$ and $o\in\OrthogonalGroup$ that
	\begin{equation}\label{K:proof_main_result_psi_branching_reminder}
		\Psi^o(rs)=\sum_{\abs{v}=n}[\Psi]_v^{oU(v)}(rsL(v)),\quad\as
	\end{equation} 
	Fix any $s\in\G$, such that $(s,u)\in\Support$ for some $u\in\U$. Expanding the right-hand side in the above formula, using \eqref{K:proof_main_result_psi_reminder} and \eqref{K:proof_main_result_psi_expanded} we obtain
	\begin{align}
		\sum_{\abs{v}=n}[\Psi]_v^{oU(v)}(rsL(v))=\sum_{\abs{v}=n}&\bigg(irs[\mu]_v^{oU(v)}-\frac{1}{2}r^2s^2L(v)^2[\Theta]^{oU(v)}_v+[W_\infty]_v{K}_1(rsL(v),oU(v)) \notag
		\\
		&+i[W_\infty]_v{K}_2(rsL(v),oU(v))+i[W_\infty]_vrsL(v){\gamma}(oU(v))\bigg) \notag
		\\
		=\sum_{\abs{v}=n}&\bigg(irs[\mu]_v^{oU(v)}-\frac{1}{2}r^2s^2L(v)^2[\Theta]^{oU(v)}_v+[W_\infty]_vL(v)^{\alpha}s^{\alpha}{K}_1(r,ou^{-1}) \notag
		\\
		&+i[W_\infty]_vL(v)^{\alpha}s^{\alpha}{K}_2(r,ou^{-1})+i[W_\infty]_vrsL(v){\gamma}(oU(v))\bigg) \label{K:proof_main_result_sum_psi_expanded}.
	\end{align}
	In the last line, we have used the $(\Support,\alpha)$-invariance of $K_1$ and $K_2$, to take $L(v)^\alpha$ outside. At the same time, this removes $U(v)$.
	Let us define
		\begin{equation}\label{K:proof_main_result_defining_Y}
		Y(o):=\mu^{o}+W_\infty{\gamma}(o)\quad\text{and}\quad V(o):=\Theta^o.
	\end{equation}
 Using these definitions in \eqref{K:proof_main_result_psi_expanded} and \eqref{K:proof_main_result_sum_psi_expanded}, we can rewrite the identity \eqref{K:proof_main_result_psi_branching_reminder} as
	\begin{align}
			&irsY(o)-\frac{1}{2}r^2s^2V(o) + s^{\alpha}W_\infty K_1(r,ou^{-1}) + is^{\alpha}W_\infty K_2(r,ou^{-1}) \notag \\
	=~&irs\sum_{|v|=n}L(v)[Y]_v(oU(v))-\frac{1}{2}r^2s^2\sum_{|v|=n}L(v)^2[V]_v(oU(v)) \notag \\
		&+ s^{\alpha}\bigg(K_1(r,ou^{-1})+iK_2(r,ou^{-1}\bigg)\sum_{|v|=n}[W_\infty]_vL(v)^{\alpha} \quad\as \label{K:proof_comparing_coefficients_full}
	\end{align}
Regarding only the imaginary part in \eqref{K:proof_comparing_coefficients_full}, we have
	\begin{align}\label{K:proof_comparing_coefficients}
		\nonumber rsY(o)&+W_\infty s^{\alpha}{K}_2(r,ou^{-1})
		\\
		&=\sum_{|v|=n}(rsL(v)[Y]_v(oU(v))+s^{\alpha}[W_\infty]_vL(v)^{\alpha}{K}_2(r,ou^{-1})\quad\as
	\end{align}
This identity holds for any $(s,u) \in \Support$. Upon dividing by $s\neq 0$, we have 
\begin{align}\label{K:proof_comparing_coefficients-2}
	\nonumber rY(o)&+W_\infty s^{\alpha-1}{K}_2(r,ou^{-1})
	\\
	&=\sum_{|v|=n}(rL(v)[Y]_v(oU(v))+s^{\alpha-1}[W_\infty]_vL(v)^{\alpha}{K}_2(r,ou^{-1})\quad\as
\end{align} By \eqref{K:assumption_m} and since $\Support$ is a group, there is $(s,u) \in \Support$ with $s>1$. If $\alpha<1$, we consider the limit of \eqref{K:proof_comparing_coefficients-2} for $s^n \to \infty$ as $n \to \infty$; for $\alpha>1$ we consider the limit of \eqref{K:proof_comparing_coefficients-2} for $s^{-n} \to 0$ as $n \to \infty$.
%
%
%
%
We obtain that
	\begin{equation}\label{K:formula_fixed_point_equation_X}
		Y(o)=\sum_{\abs{v}=n}L(v)[Y]_v(oU(v)),\quad \as
	\end{equation}
	\textit{i.e.}, identity \eqref{K:proposition_main_result_invariance_property_Y}.
Next, regarding only the real part in \eqref{K:proof_comparing_coefficients_full}, we have	
		\begin{align}
		&-\frac{1}{2}r^2s^2V(o)-s^{\alpha}W_\infty K_1(r,ou^{-1})\notag \\
		=~&-\frac{1}{2}r^2s^2\sum_{|v|=n}L(v)^2[V]_v(oU(v)) 
		-s^{\alpha}K_1(r,ou^{-1})\sum_{|v|=n}[W_\infty]_vL(v)^{\alpha} \quad\as \label{K:proof_comparing_coefficients_002}
	\end{align}
For $\alpha<2$ dividing \eqref{K:proof_comparing_coefficients_002} by $s^2$ and considering a sequence $s^n\rightarrow\infty$ immediately gives us
	\begin{equation}\label{K:formula_fixed_point_equation_Y}
		V(o)=\sum_{\abs{v}=n}L(v)^2[V](oU(v)),\quad\as
	\end{equation}
\textit{i.e.}, the identity \eqref{K:proposition_main_result_invariance_property_V}.

For $\alpha=2$ we will now prove, that $\nu^{o}$ and hence $K_1$ vanishes a.s., which in view of \eqref{K:proof_comparing_coefficients_002} implies \eqref{K:formula_fixed_point_equation_Y} as well in this case.
Recall, that for $\alpha=2$ by assumption \eqref{K:assumption_support} there is $s \neq 1$ and $u \in \OrthogonalGroup$ such that both $(1,u)$ and $(s,u)$ are in $\Support$. By group property of $\Support$, we may again assume, that $s>1$. Using the $(\Support,\alpha)$-invariance of $\bar{\nu}^o=\E\nu^o$, proved in Lemma \ref{K:proposition_levy_triplet_nu_characteristics}, we have, since $(s,u) \in \Support$,
	\begin{equation*}
			\bar{\nu}^o(sB)=s^{-\alpha} \bar{\nu}^{ou^{-1}}(B)=s^{-2} \bar{\nu}^{ou^{-1}}(B)
		\end{equation*}
	and also, since $(1,u) \in \Support$, 
	\begin{equation*}
			\bar{\nu}^o(B)=\bar{\nu}^{ou^{-1}}(B),
	\end{equation*}
	hence, by combining and iterating the above identities,
	\begin{equation}\label{eq:proof nusn B}
			\bar{\nu}^o(s^nB)=s^{-2n} \bar{\nu}^{o}(B)
		\end{equation}
	for any  $o\in\OrthogonalGroup$, $B\in\Rnot$ and $n\in\N$.
	From Lemma \ref{K:proposition_levy_triplet_nu_characteristics} we know, that $\bar{\nu}^o$ is a L\'evy measure, hence
	\begin{equation*}
			\int_{}(x^2\wedge 1)\bar{\nu}^o(dx)<\infty.
		\end{equation*}
	By direct calculations, using \eqref{eq:proof nusn B}, we have
	\begin{align*}
			\infty&>\int_{}(x^2\wedge 1)\bar{\nu}^o(dx)\ge \int_{(0,\infty)}(x^2\wedge 1)\bar{\nu}^o(\d x)=\int_{(0,1]}x^2\bar{\nu}^o(\d x)+\bar{\nu}^o((1,\infty))
			\\
			&=\sum_{n\ge 0}\int_{(s^{-n-1},s^{-n}]}x^2\bar{\nu}^o(\d x)+\bar{\nu}^o((1,\infty))=\sum_{n\ge 0}\int_{(\frac{1}{s},1]}s^{-2n}x^2\bar{\nu}^o(s^{-n}dx)+\bar{\nu}^o((1,\infty))
			\\
			&=\sum_{n\ge 0}s^{n(2-2)}\int_{(\frac{1}{s},1]}x^2\bar{\nu}^{o}(\d x)+\bar{\nu}^o((1,\infty))
			\ge \sum_{n\ge 0}s^{-2}\bar{\nu}^{o}((\tfrac1s,1])+\bar{\nu}^o((1,\infty))
			\\
			&=\infty ~s^{-2}\bar{\nu}^{o}((\tfrac1s,1])+\bar{\nu}^o((1,\infty)).
		\end{align*}
	To avoid pending contradiction, it follows necessarily, that
	\begin{equation*}
			\bar{\nu}^{o}((\tfrac1s,1])=0\quad\text{and}\quad\bar{\nu}^o((1,\infty))<\infty.
		\end{equation*}
	Note that we have required \eqref{eq:proof nusn B} and thereby the assumption \eqref{K:assumption_support} to compare the L\'evy measure of $(s^{-n-1},s^{-n}]$ to that of $(s^{-1},1]$ without changing the index $o$.
	Observe further, that
	\begin{equation*}
			\bar{\nu}^{o}((0,1])=\sum_{n\ge 0}\bar{\nu}^{o}((\tfrac{1}{s^{n+1}},\tfrac{1}{s^n}])=\sum_{n\ge 0}s^{2n}\bar{\nu}^{o}((\tfrac{1}{s},1])=0,
		\end{equation*}
	since each summand in the last sum is equal to 0. 
	Finally, by another appeal to \eqref{eq:proof nusn B}, we have that
	\begin{equation*}
			\bar{\nu}^o ((0,s^n])=\bar{\nu}^o (s^n(0,1])= s^{-2n} \bar{\nu}^o ((0,1]) = 0
		\end{equation*}
	for all $n \in \N$. Therefore, $\bar{\nu}^o((0,\infty))=0$. Using analogous arguments, it follows that $\bar{\nu}^o((-\infty,0))=0$. Hence $\nu^o$ and consequently $K_1$ vanishes a.s., and we have \eqref{K:formula_fixed_point_equation_Y} also for the case $\alpha=2$.

Finally, we define
	\begin{equation}\label{K:proof_main_result_defining_C}
			{K}(r,o):=-{K}_1(r,o)-i{K}_2(r,o)=\int_{}(1-e^{iry})\bar{\nu}^{o}(dy)
	\end{equation}
and note that $K$ inherits the property of $(\Support,\alpha)$-invariance from $K_1$ and $K_2$. 
	
Combining \eqref{K:proof_main_result_defining_C}, \eqref{K:proof_main_result_defining_Y} and \eqref{K:proof_main_result_psi_reminder} we conclude, that
	\begin{equation}\label{K:proof_main_result_general_representation_of_solution}
		\phi(roe_3)=\E\bigg[ \exp\bigg\{-W_\infty{K}(r,o)-\frac12r^2V(o)+irY(o)\bigg\}\bigg],
	\end{equation}
	with $Y$ and $V$ satisfying the a.s.\ equations \eqref{K:proposition_main_result_invariance_property_Y} and \eqref{K:proposition_main_result_invariance_property_V}, respectively.
	
	We finish this step with a remark, that $K$ vanishes a.s. in case $\alpha=2$, since $\nu^o$ does so, as has been proven above.
	
	\medskip

	\textbf{Step 2}.
	As the next step, we prove that $V$ vanishes for $\alpha <2$.
	Recall that, by Lemma \ref{K:proposition_uniform_boundedness_of_theta_o},   $V(\cdot)=\Theta^{(\cdot)}$ is almost surely bounded  by $\widetilde{C}W_\infty$ for some $\widetilde{C}>0$, uniformly in $\OrthogonalGroup$. 
	Since $\Theta^o$ is $F_\infty$-measurable and integrable (since $W_\infty$ is integrable), we have
	\begin{equation*}
		V(o)=\Theta^o=\lim_{n\rightarrow\infty}\E[\Theta^o~|~F_n] \quad \as
	\end{equation*}
	and we may use the identity \eqref{K:formula_fixed_point_equation_Y} to compute
	\begin{align}\label{K:proof_main_result_theta_o_(1)}
		\nonumber V(o)=\Theta^o&=\lim_{n\rightarrow\infty}\E[\Theta^o~|~F_n]=\lim_{n\rightarrow\infty}\E\bigg[\sum_{\abs{v}=n}L(v)^2[\Theta]^{oU(v)}_v~|~F_n\bigg]
		=\lim_{n\rightarrow\infty}\sum_{\abs{v}=n}L(v)^2\E\bigg[[\Theta]^{oU(v)}_v\bigg]
		\\
		\nonumber&\leq \widetilde{C}\lim_{n\rightarrow\infty}\sum_{\abs{v}=n}L(v)^2
		\leq \widetilde{C}\lim_{n\rightarrow\infty}\bigg(\sup_{\abs{v}=n}(L(v)^{2-\alpha})\sum_{\abs{v}=n}L(v)^{\alpha}\bigg)
		\\
		&=\widetilde{C}W_\infty\lim_{n\rightarrow\infty}\sup_{\abs{v}=n}(L(v)^{2-\alpha})=\widetilde{C}W_\infty\lim_{n\rightarrow\infty}\sup_{\abs{v}=n}(L(v)^{2-\alpha})=0,\quad\as,
	\end{align}
	where the last equality holds by relation \eqref{K:supremum_L(v)_goes_to_0} for both $\alpha\in(0,1)$ and $\alpha\in(1,2)$. Since $\Theta^o$ is non-negative by definition, this implies, that $V(o)=0$ almost surely.
	
	\medskip
	
	\textbf{Step 3}.
	In this step, we prove that $Y$ vanishes almost surely in the case $\alpha\in(0,1)$. As a consequence of Lemma \ref{K:proposition_uniform_boundedness_of_mu_o} and Lemma \ref{K:proposition_levy_triplet_integral_characteristics}, $\abs{Y(o)}=\abs{\mu^o+W_\infty{\gamma}(o)}$ is almost surely uniformly bounded in $\OrthogonalGroup$ by $\widehat{C}W_\infty$ with some constant $\widehat{C}>0$. Thus, we have
	\begin{align*}
		\abs{Y(o)}&=\lim_{n\rightarrow\infty}\bigabs{\E[Y(o)~|~F_n]}\leq\lim_{n\rightarrow\infty}\E\bigg[\biggabs{\sum_{\abs{v}=n}L(v)[Y]_v^{oU(v)}~|~F_n}\bigg]
		\\
		&\leq\lim_{n\rightarrow\infty}\sum_{\abs{v}=n}L(v)\E\bigg[\bigabs{[Y]_v^{oU(v)}}\bigg]
		\leq \widehat{C}\lim_{n\rightarrow\infty}\bigg(\sup_{\abs{v}=n}(L(v)^{1-\alpha})\sum_{\abs{v}=n}L(v)^{\alpha}\bigg)=0\quad\as
	\end{align*}
	Arguing as in the previous step, it follows, that $Y(o)=0$ almost surely whenever $\alpha \in (0,1)$,  which completes the proof.
\end{proof}
\begin{proof}[Proof of the direct inclusion in Theorem \ref{K:proposition_main_result}]
	Throughout the proof, let $r\in\R_\ge$ and $o\in\OrthogonalGroup$ be arbitrarily fixed. We rely on properties \eqref{K:proposition_main_result_invariance_property_V} and \eqref{K:proposition_main_result_invariance_property_Y} of $V$ and $Y$, as well as the disintegration property \eqref{K:martingale_W_n_limit_branching_property} of $W_\infty$ and the fact that $K$ is $(\Support,\alpha)$-invariant. Observe, that
	\begin{align*}
		&\phi(roe_3)
		=\E\bigg[\exp\bigg\{-\frac12 r^2V(o) + irY(o) - W_\infty K(r,o)\bigg\}\bigg]
		\\
		&=\E\bigg[\exp\bigg\{\sum_{\abs{v}=1}\bigg(-\frac12 r^2L(v)^2[V]_v(oU(v)) + irL(v)[Y]_v(oU(v)) - L(v)^\alpha[W_\infty]_v K(r,o)\bigg)\bigg\}\bigg]
		\\
		&=\E\bigg[\prod_{\abs{v}=1}\exp\bigg\{-\frac12 (rL(v))^2[V]_v(oU(v)) + irL(v)[Y]_v(oU(v)) - [W_\infty]_v K(rL(v),oU(v))\bigg\}\bigg].
	\end{align*}
	By conditioning the last term in the above formula on $F_1$, we immediately obtain, that it is equal to 
	\begin{equation*}
		\E\bigg[\prod_{\abs{v}=1}\phi(rL(v)oU(v)e_3)\bigg],
	\end{equation*}
	which proves the claim.
\end{proof}

\appendix

\section{Appendix}

\subsection{Geometry of $\Psi^o$}\label{sect:geometry}
In this section we investigate consequences of the embedding of random vectors with characteristic functions satisfying \eqref{K:stationary_equation} into the set of random $3 \times 3$-matrices. As we have seen already in Remark \ref{K:remark_time-dependent_solution}, this embedding allows us to interprete $\phi_t$ as the restriction of a characteristic function of $3 \times 3$-matrices; thereby giving a direct proof of continuity properties.

In the following, we want to make use of this embedding to obtain an interpretation of how the measures $\nu^o(\bf l)$, $o \in \OrthogonalGroup$, depend on $o$, by identifying them as marginals of a unique L\'evy measure  for $3 \times 3$ matrices (or vectors in $\R^9$). Unfortunately, we will obtain this identification only when fixing a realisation $\bf l$ of $\bf L$, see Remark \ref{rem:problemwithsubsequence} below for details.  Further, for simplicity, we restrict our presentation to the case $\alpha<1$ in which drift and covariance term vanish.

Let $\mathbf{l}\in H^c$, where $H^c$ is defined in Remark \ref{K:rem:null_set}. Thus the statement of Proposition \ref{prop:SimultaneousConvergence} is true for $\mathbf{l}$ and the limit $W_\infty(\mathbf{l})$ of the additive martingale defined in \eqref{K:martingale_W_n} is strictly positive. 
Let $\phi$ be a characteristic function of some random $\R^3$-vector $X=\transpose{(X_1,X_2,X_3)}$ satisfying \eqref{K:stationary_equation}. 
Let $\phi$ be a characteristic function of some random $\R^3$-vector $X=\transpose{(X_1,X_2,X_3)}$ satisfying \eqref{K:stationary_equation}. 
Now let $\widetilde{X}$ be a random ${3\times3}$-matrix, defined as
\begin{equation*}
	\widetilde{X}=\begin{pmatrix}
		0 & 0 & X_1\\
		0 & 0 & X_2\\
		0 & 0 & X_3\\
	\end{pmatrix}.
\end{equation*}
Similarly to Remark \ref{K:remark_time-dependent_solution}, we observe that
\begin{equation*}
	\phi(roe_3)=\phi_X(roe_3)=\E[ e^{r\scalar{e_3,\transpose{o}X}}]=\E [e^{\tr(\transpose{(ro)}\widetilde{X})}]=:\psi_{\widetilde{X}}(ro)
\end{equation*}
for any $r\ge 0$, where $\psi_{\widetilde{X}}$ denotes the characteristic function of $\widetilde{X}$. Let $\vecc(\widetilde{X})$ be a vectorization of $\widetilde{X}$, \textit{i.e.} the random vector $\transpose{(0,\ldots,0,X_1,X_2,X_3)}$ in $\R^9$, and note that $\tr(\transpose{o}\widetilde{X})=\scalar{\vecc(o),\vecc(\widetilde{X})}$. Therefore, we may write
\begin{equation}\label{K:formula:phi_psi_zeta}
	\phi_{X}(roe_3)\equiv \psi_{\widetilde{X}}(ro)\equiv \zeta_{\vecc(\widetilde{X})}(r\vecc(o)),
\end{equation}
where use the notation $\zeta_{\vecc(\widetilde{X})}$ for the characteristic function of $\vecc(\widetilde{X})$. 

In the remaining part of this section we reserve notations $\phi,\psi$ and $\zeta$ for characteristic functions of a random $\R^3$-vector, $\R^{3\times3}$-matrix and $\R^9$-vector, respectively. 

Recalling the definition of $M$ and using linearity and cyclic property of traces, for any $n\in\N$ and $r\ge 0$ we infer from \eqref{K:formula:phi_psi_zeta}, that
\begin{align}\label{K:geometry_Mn_equal_to_zetaYn}
	\nonumber M_n(r,o,\mathbf{l})&=\prod_{\abs{v}=n}\phi(rol(v)u(v)e_3)=\prod_{\abs{v}=n}\psi(rl(v)ou(v))
	\\
	\nonumber&=\prod_{\abs{v}=n}\E\bigg[ \exp\bigg\{irl(v)\tr(\transpose{(ou(v))}\widetilde{X}(v))\bigg\}\bigg]
	\\
	\nonumber&=\E\bigg[ \exp\bigg\{{\sum_{\abs{v}=n}ir\tr\bigg(l(v)\transpose{(ou(v))}\widetilde{X}(v)\bigg)}\bigg\}\bigg]
	\\
	\nonumber&=\E\bigg[ \exp\bigg\{{i\tr\bigg(\transpose{(ro)}\sum_{\abs{v}=n}l(v)\widetilde{X}(v)\transpose{u(v)}\bigg)}\bigg\}\bigg]
	\\
	&=\E \bigg[\exp\bigg\{{i\bigscalar{r\vecc(o),\vecc\bigg(\sum_{\abs{v}=n}l(v)\widetilde{X}(v)\transpose{u(v)}\bigg)}}\bigg\}\bigg]=:\zeta_{Y_n(\mathbf{l})}(r\vecc(o)),
\end{align}
where, as usual, $(\widetilde{X}(v))_{v}$ denote independent copies of $\widetilde{X}$.
From \eqref{K:geometry_Mn_equal_to_zetaYn} we infer, that $M_n(r,o,\mathbf{l})$ can be considered as the {\em restriction} of the characteristic function $\zeta_{Y_n(\mathbf{l})}$ of a nine-dimensional random vector \begin{equation}\label{K:geometry_def_Yn}
	Y_n(\mathbf{l}):=\sum_{\abs{v}=n}l(v)\vecc(\widetilde{X}(v)\transpose{u(v)}),
\end{equation} 
evaluated {\em only} at $r\vecc(o)$, for any $n\in\N$ .  In view of \eqref{K:supremum_L(v)_goes_to_0}, $(Y_n(\mathbf{l}))_{n\in\N}$ are row sums in a triangular null array.

However, in contrast to the (one-dimensional) considerations in Proposition \ref{prop:SimultaneousConvergence}, we cannot show that $\zeta_{Y_n(\mathbf{l})}$ converges as a function on $\R^9$ as $n \to \infty$, because the convergence of the multiplicative martingale $M_n(r,o,\mathbf{l})$ only yields the pointwise convergence of $\zeta_{Y_n(\mathbf{l})}$ at the lower-dimensional subset of points $\{r\vecc(o):~r\in\R_\ge,o\in\OrthogonalGroup\}$. That is, we cannot conclude that the row sums  $(Y_n(\mathbf{l}))_{n\in\N}$ in the triangular array converge in distribution.

What we can do instead is to prove, for fixed $\mathbf{l}$, that the sequence $(Y_n(\mathbf{l}))_{n\in\N}$ has a convergent subsequence, the limit of which can be identified as an infinitely divisible random vector by means of \cite[Theorem 13.28]{Kallenberg:1997}.

\begin{lem}\label{K:prop:construction_of_Yl}
	For any $\mathbf{l}\in N^c$ let $(Y_n(\mathbf{l}))_n$ be a sequence of $\R^9$-vectors defined by \eqref{K:geometry_def_Yn}. Suppose, that assumptions \eqref{K:assumption_m},\eqref{K:assumption_derivative_m} and \eqref{K:assumption_regular_variation} hold with $\alpha\in(0,1)$. Then there exists an infinitely divisible random vector $Y(\mathbf{l})$ in $\R^9$ and a subsequence $(Y_{n_m}(\mathbf{l}))_m$, such that 
	\begin{equation}\label{K:prop:Ynk_to_Y}
		Y_{n_m}(\mathbf{l})\dc Y(\mathbf{l}), \quad m\rightarrow\infty.
	\end{equation}
	Moreover, for any $r\ge 0$ and $o\in\OrthogonalGroup$ one has
	\begin{equation}\label{K:prop:Minfty_equal_zetaY}
		M_\infty(r,o,\mathbf{l})=\zeta_{Y(\mathbf{l})}(r\vecc(o)),
	\end{equation}
	where $\zeta_{Y(\mathbf{l})}$ is the characteristic function of $Y(\mathbf{l})$ and $M_\infty$ is the a.s. limit of the multiplicative martingale $M$, defined by formula \eqref{K:multiplicative_martingale_M}.
\end{lem}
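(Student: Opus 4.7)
The plan is to establish tightness of the sequence $(Y_n(\mathbf{l}))_n$ in $\R^9$, extract a convergent subsequence via Prokhorov's theorem, then identify the limit via the already-established convergence of $M_n$.

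First, I would prove tightness. Applying the bound $\|Y_n(\mathbf{l})\| \le \sum_{|v|=n} l(v) \|\widetilde{X}(v) \transpose{u(v)}\|_{\mathrm{F}} \le \sum_{|v|=n} l(v) \|X(v)\|$, where $X(v)$ are i.i.d.\ copies of $X$, one obtains for any $K>0$
\begin{equation*}
	\P\big(\|Y_n(\mathbf{l})\| > K\big) \le \P\bigg( \sum_{|v|=n} l(v) \|X(v)\| > K \bigg).
\end{equation*}
The right-hand side is exactly the quantity estimated in Lemma~\ref{lem:regvar} (with $r=1$), whose proof relies only on assumption \eqref{K:assumption_regular_variation} and the fact that $\mathbf{l}\in N^c$ gives $\sup_n \sum_{|v|=n} l(v)^\alpha < \infty$. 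Hence this probability can be made arbitrarily small uniformly in $n$ by choosing $K$ large, which is exactly tightness of $(Y_n(\mathbf{l}))_n$.

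Second, by Prokhorov's theorem there exists a subsequence $(Y_{n_m}(\mathbf{l}))_m$ converging in distribution to some random vector $Y(\mathbf{l})$. By \eqref{K:supremum_L(v)_goes_to_0} applied for $\mathbf{l}\in N^c \subseteq O^c$, we have $\sup_{|v|=n} l(v) \to 0$, and since each individual summand $l(v) \vecc(\widetilde{X}(v) \transpose{u(v)})$ in the definition of $Y_n(\mathbf{l})$ has norm bounded by $l(v)\|X(v)\|$, the summands are infinitesimal (independent across $|v|=n$). Thus $(Y_n(\mathbf{l}))_n$ forms a sequence of row sums in a triangular null array of independent vectors, and any weak subsequential limit is necessarily infinitely divisible — this is the multivariate analogue used implicitly in Lemma~\ref{lem:Minfintelydivisible}, and follows, e.g., from \cite[Theorem 13.28]{Kallenberg:1997}.

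Finally I would identify the limit. For any fixed $r\ge 0$ and $o\in\OrthogonalGroup$, the identity \eqref{K:geometry_Mn_equal_to_zetaYn} gives $\zeta_{Y_{n_m}(\mathbf{l})}(r\vecc(o)) = M_{n_m}(r,o,\mathbf{l})$. On the one hand, weak convergence of $Y_{n_m}(\mathbf{l})$ implies pointwise convergence of characteristic functions, hence the left-hand side converges to $\zeta_{Y(\mathbf{l})}(r\vecc(o))$. On the other hand, Proposition~\ref{prop:SimultaneousConvergence} applied for $\mathbf{l}\in N^c$ yields $M_{n_m}(r,o,\mathbf{l}) \to M_\infty(r,o,\mathbf{l})$. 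Equating the two limits gives \eqref{K:prop:Minfty_equal_zetaY}.

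The only place requiring real work is the tightness step, and even there the argument reduces to an estimate already established earlier; after that, the identification of the limit is forced by the uniqueness of pointwise limits of characteristic functions. A subtlety worth noting is that \eqref{K:prop:Minfty_equal_zetaY} determines $\zeta_{Y(\mathbf{l})}$ only on the (non-open) set $\{r\vecc(o):r\ge 0, o\in\OrthogonalGroup\}$, so the law of $Y(\mathbf{l})$ itself — and in particular the subsequence $(n_m)$ — need not be canonically determined by $M_\infty(\cdot,\cdot,\mathbf{l})$, which is why the statement only asserts existence of a subsequence rather than convergence of the full sequence.
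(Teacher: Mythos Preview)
Your argument is correct and genuinely more elementary than the paper's own proof. The paper proceeds by directly constructing the three components of the L\'evy triplet of the limit: it shows that the measures $\nu_n(\mathbf{l}):=\sum_{|v|=n}\P\circ\xi(v)^{-1}$ have a vaguely convergent subsequence on $\R^9\setminus\{0\}$ (using the same tail bound \eqref{K:assumption_regular_variation} you invoke), then separately bounds the truncated means $\mu_n^h(\mathbf{l})$ and truncated covariance matrices $\Theta_n^h(\mathbf{l})$ to extract convergent subsequences of each, and finally combines the three via a diagonal argument and the \emph{converse} implication of \cite[Theorem 13.28]{Kallenberg:1997} to conclude distributional convergence along a common subsequence. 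Your route bypasses all of this by establishing tightness of $(Y_n(\mathbf{l}))_n$ in one stroke via Lemma~\ref{lem:regvar}, then appealing to Prokhorov and the classical fact that weak limits of null-array row sums are infinitely divisible.

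What the paper's approach buys is an explicit handle on the L\'evy triplet $(\mu(\mathbf{l}),\Theta(\mathbf{l}),\nu(\mathbf{l}))$, which is exactly what is used immediately afterwards in Proposition~\ref{K:prop:geometry_of_Psio} to relate $\nu^o$, $\mu^o$, $\Theta^o$ to marginals of the nine-dimensional objects. Your approach yields the same triplet abstractly (via the uniqueness of the L\'evy--Khintchine representation of $Y(\mathbf{l})$), so nothing is lost for the sequel; it is simply quicker to the stated lemma. Your closing remark about non-uniqueness of the subsequence is apt and matches the paper's own Remark~\ref{rem:problemwithsubsequence}. One small notational quibble: you write $N^c\subseteq O^c$, whereas in the paper's conventions $N^c$ is intersected with $O^c$ (and $P^c$) when needed rather than being contained in it; this is harmless for the argument since all relevant null sets are of full probability.
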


\begin{rem}\label{rem:problemwithsubsequence}
	Note that the choice of the subsequence $(Y_{n_m}(\mathbf{l}))_m$ will in general depend on $\mathbf{l}$ and we cannot use  relation \eqref{K:prop:Minfty_equal_zetaY} to prove that all subsequential limits are equal, since \eqref{K:prop:Minfty_equal_zetaY} does not evaluate $\zeta_{Y(\bf l)}$ at a dense set of points. We have tried hard to consider $\zeta_{Y(\bf l)}$ as a characteristic function not on 9-dimensional space, but on some suitable lower-dimensional set of equivalence classes, but have failed so far.
	
	This has made it impossible for us to prove the measurability of $Y$ or $\zeta_Y$ as a function of $\mathbf{l}$, which would be required to obtain a representation of $\zeta_{\vec{\tilde{X}}}$ in terms of random L\'evy-Khintchine exponent as in \eqref{K:characteristic_exponent_Psi}.
\end{rem}

\begin{proof}[Proof of Lemma \ref{K:prop:construction_of_Yl}] Our approach is based on direct construction of the limiting infinitely divisible random vector $Y(\mathbf{l})$, which satisfies our needs. 
	In steps 1--3 we construct the three components of the (deterministic) L\'evy-triplet, associated with $Y(\mathbf{l})$, namely the L\'evy measure $\nu(\mathbf{l})$, shift $\mu(\mathbf{l})$ and covariance matrix $\Theta(\mathbf{l})$. In step 4 we prove the assertions \eqref{K:prop:Ynk_to_Y} and \eqref{K:prop:Minfty_equal_zetaY}. 
	
	Throughout the proof we assume $\mathbf{l}\in N^c$ to be fixed. To simplify notations, we denote $\xi(v):=l(v)\vecc(\widetilde{X}(v)\transpose{u(v)})$, hence
	\begin{equation*}
		Y_n(\mathbf{l})=\sum_{\abs{v}=n}\xi(v).
	\end{equation*}
	
	\textbf{Step 1}: 
	We start with the  construction of the L\'evy measure. Consider a family of (deterministic) measures $(\nu_n(\mathbf{l}))_n$ on $\R^9$, defined by 
	\begin{equation*}
		\nu_n(\mathbf{l}):=\sum_{\abs{v}=n}\P\circ \xi(v)^{-1}\quad\text{on $\R^9\setminus\{0\}$}
	\end{equation*}
	and $\nu_n(\mathbf{l})(\{0\}):=0$.
	We prove, that there exists a subsequence of $(\nu_n(\mathbf{l}))_n$ which converges vaguely on $\R^9\setminus \{0\}$, and its limit, which we denote by $\nu(\mathbf{l})$, is a L\'evy measure. By assumption \eqref{K:assumption_regular_variation}, for any $r>0$ we have
	\begin{align}\label{K:proof:bounded_nu_n}
		\nonumber\nu_n(\mathbf{l})(B^c_r(0))&=\sum_{\abs{v}=n}\P\bigg(\xi(v)\in B^c_r(0)\bigg)
		=\sum_{\abs{v}=n}\P\bigg(\norm{l(v)\vecc(\widetilde{X}(v)\transpose{u(v)})}\ge r\bigg)
		\\
		\nonumber&=\sum_{\abs{v}=n}\P\bigg(l(v)\norm{\widetilde{X}(v)\transpose{u(v)}}_{\mathbf{F}}\ge r\bigg)
		=\sum_{\abs{v}=n}\P\bigg(l(v)\norm{\widetilde{X}(v)}_{\mathbf{F}}\ge r\bigg)
		\\
		&=\sum_{\abs{v}=n}\P\bigg(l(v)\norm{X(v)}\ge r\bigg)
		\leq \sum_{\abs{v}=n} Cr^{-\alpha}l(v)^\alpha = Cr^{-\alpha} W_n(\mathbf{l}),
	\end{align}
	were $\norm{\cdot}_{\mathbf{F}}$ is the Frobenius norm and $W_n(\mathbf{l})$ is a deterministic constant, which is the realization of an additive martingale $(W_n)_n$. Recall, that on $N^c$ we have $W_n(\mathbf{l})\rightarrow W_\infty(\mathbf{l})\in(0,\infty)$, and hence $\sup_n W_n(\mathbf{l})<\infty$. Fix any function $f\in C^+_c(\R^9\setminus \{0\})$ and assume w.l.o.g. that $f$ is supported on a compact set $A\subset B^c_\delta(0)\subset \R^9\setminus\{0\}$ for some $\delta>0$. By \eqref{K:proof:bounded_nu_n} we have
	\begin{equation*}
		\sup_{n\in\N}\int f \d\nu_n(\mathbf{l})\leq \sup_{n\in\N}\bigg\{\nu_n(\mathbf{l})(B_\delta^c(0))\max_{x\in A}f(x)\bigg\}\leq\widetilde{C}\delta^{-\alpha}<\infty
	\end{equation*}
	for some constant $\widetilde{C}$. Thus we may apply the converse implication in \cite[Theorem A2.3]{Kallenberg:1997}, part (ii), and conclude, that there exists a subsequence $(\nu_{n_k}(\mathbf{l}))_k$, such that 
	\begin{equation}\label{K:proof:convergence_nuhk}
		{\nu_{n_k}(\mathbf{l})}\overset{v}{\rightarrow} \nu(\mathbf{l}),\quad k\rightarrow\infty,
	\end{equation} 
	for some locally finite measure $\nu(\mathbf{l})$ on $\R^9\setminus\{0\}$, where $\overset{v}{\rightarrow}$ denotes vague convergence.
	Since also $\nu_n(\mathbf{l})(\{0\})=0$ for all $n$, we may well-define $\nu(\mathbf{l})(\{0\}):=0$. The fact, that $\nu(\mathbf{l})$ is a L\'evy measure then follows from the estimate
	\begin{align*}
		\int (\norm{x}^2 \wedge 1)\nu(\mathbf{l})(\d x)\leq \nu(B^c_1(0))+&\int_{B_1(0)}\norm{x}^2 \nu(\mathbf{l})(\d x)\leq \widetilde{C}_1+\int_{(0,1]}t^2\d \nu(\mathbf{l})(B^c_t)
		\\
		&\leq\widetilde{C}_1+\int_0^1 t^2 \d \widetilde{C}_2 t^{-\alpha}=\widetilde{C}_1 + \widetilde{C}_2\int_0^1 t^{1-\alpha}\d t < \infty
	\end{align*} 
	for some constants $\widetilde{C}_1$ and $\widetilde{C}_2$, where the upper bound on $\nu(\mathbf{l})(B_t^c)$ is derived from \eqref{K:proof:bounded_nu_n}. 
	We may therefore assume, that $\nu(\mathbf{l})$ is the deterministic L\'evy measure which corresponds to an infinitely divisible random $\R^9$-vector $Y(\mathbf{l})$. 
	
	\textbf{Step 2}: 
	In the next step we construct the shift $\mu(\mathbf{l})$, associated with $Y(\mathbf{l})$.
	We fix any $h\in(0,1)$ such that $\nu(\mathbf{l})(\{\norm{x}=h\})=0$. Define a sequence of nine-dimensional vectors $(\mu_n^h(\mathbf{l}))_n$ as
	\begin{equation*}
		\mu_n^h(\mathbf{l}):=\sum_{\abs{v}=n} \E [\xi(v):~\norm{\xi(v)}\le h].
	\end{equation*}
	Utilizing assumption  \eqref{K:assumption_regular_variation}, we observe that
	\begin{align*}
		\norm{\mu_n^h(\mathbf{l})}&=\bignorm{\sum_{\abs{v}=n} \E [\xi(v):~\norm{\xi(v)}\le h]}
		\leq \sum_{\abs{v}=n} \E \bigg[\norm{\xi(v)} \1_{\norm{\xi(v)}\le h}\bigg]
		\\
		&\leq \sum_{\abs{v}=n} \int_0^h \P\bigg(\norm{\xi(v)} > x\bigg) \d x
		\leq \sum_{\abs{v}=n} \int_0^h \P\bigg(\norm{l(v)\vecc(\widetilde{X}(v)\transpose{u(v)})} > x\bigg) \d x
		\\
		&= \sum_{\abs{v}=n} \int_0^h \P\bigg(l(v)\norm{X(v)} > x\bigg) \d x\leq \sum_{\abs{v}=n}C l(v)^\alpha \int_0^h x^{-\alpha} \d x < \widetilde{C}\sup_n W_n(\mathbf{l})<\infty
	\end{align*}
	for some constant $\widetilde{C}$. This implies, that the sequence $(\norm{\mu_n^h(\mathbf{l})})_n$ is uniformly bounded. Furthermore, for any $n\in\N$ each of nine entries of the vector  $\mu_n^h(\mathbf{l})=\transpose{(\{\mu_n^h(\mathbf{l})\}_1,\ldots,\{\mu_n^h(\mathbf{l})\}_9)}$ is bounded by $\norm{\mu_n^h(\mathbf{l})}$. This implies uniform boundedness in $\R^9$ of the sequence $(\mu_n^h(\mathbf{l}))_n$, hence there exists a convergent subsequence $(\mu_{n_j}^h(\mathbf{l}))_j$ such that
	\begin{equation}\label{K:proof:convergence_muhj} 
		{\mu_{n_j}^h(\mathbf{l})}\rightarrow \mu^h(\mathbf{l}), \quad j\rightarrow\infty,
	\end{equation} 
	for a certain finite vector $\mu^h(\mathbf{l})$ in $\R^9$. Let us define
	\begin{equation}\label{K:proof:def_mul}
		\mu(\mathbf{l}):=\mu^h(\mathbf{l}) + \int_ {\{h<\norm{x}\le 1\}} \nu(\mathbf{l})(\d x).
	\end{equation}
	Since the integral in the right-hand side is finite as the consequence of \eqref{K:proof:bounded_nu_n}, $\mu(\mathbf{l})$ is  finite as well. We may therefore assume, that $\mu(\mathbf{l})$ is a (deterministic) shift in the L\'evy-triplet, associated with $Y(\mathbf{l})$.

	\textbf{Step 3}: 
	It remains to construct the covariance matrix $\Theta(\mathbf{l})$. We proceed in a similar way as before. Let $h$ be the same, as the one used in the definition of $(\mu_n^h)_n$ previously in step 2. Define a sequence of matrices $(\Theta_n^h(\mathbf{l}))_n$ in $\R^{9\times 9}$ as
	\begin{equation*}
		\Theta_n^h(\mathbf{l}):=\sum_{\abs{v}=n} \Cov \bigg[\xi(v):~\norm{\xi(v)}\le h\bigg].
	\end{equation*}
	We proceed with proving the boundedness of entries on the main diagonal of $\Theta_n^h(\mathbf{l})$. Denote by $\{\Theta_n^h(\mathbf{l})\}_{ij}$ an entry in the $i$-th row and $j$-th column of $\Theta_n^h(\mathbf{l})$, $i,j=1,\ldots,9$. Likewise, let $\{\xi(v)\}_i$ be the $i$-th entry of $\xi(v)$. Direct calculations based on the estimate in assumption \eqref{K:assumption_regular_variation} yield
	\begin{align}\label{K:proof:Vxi_i}
		\nonumber\V&\bigg[\{\xi(v)\}_i:~\norm{\xi(v)}\le h\bigg]
		\le \E\bigg[\{\xi(v)\}_i\1_{\norm{\xi(v)}\le h}\bigg]^2 - \bigg(\E\bigg[\{\xi(v)\}_i\1_{\norm{\xi(v)}\le h}\bigg]\bigg)^2
		\\
		\nonumber&\le \E\bigg[\{\xi(v)\}_i\1_{\norm{\xi(v)}\le h}\bigg]^2 
		\le \E\bigg[(\{\xi(v)\}_i)^2\1_{(\{\xi(v)\}_i)^2\le h}\bigg]
		= \int_{0}^{h}\P\bigg((\{\xi(v)\}_i)^2 >x \bigg) \d x
		\\
		&\le \int_{0}^{h}\P\bigg(\norm{\xi(v)}^2 > x\bigg) \d x  \le \int_{0}^{h}\P\bigg(l(v)\norm{X} > \sqrt{x}\bigg) \d x \le C l(v)^\alpha \int_{0}^{h}x^{-\frac{\alpha}{2}}\d x\leq \widetilde{C} l(v)^\alpha
	\end{align}
	for some constant $\widetilde{C}<\infty$. Therefore, for any $i=1,\ldots,9$ we obtain
	\begin{equation}\label{K:proof:sum_Vxi_i}
		[\Theta_n^h(\mathbf{l})]_{ii}=\sum_{\abs{v}=n} \V\bigg[\{\xi(v)\}_i:~\norm{\xi(v)}\le h\bigg] \leq \widetilde{C}\sup_n W_n(\mathbf{l}).
	\end{equation}
	
	Furthermore, by \eqref{K:proof:Vxi_i} all off-diagonal elements $\{\Theta_n^h(\mathbf{l})\}_{ij}$ can be bounded as
	\begin{align}\label{K:proof:sum_Vxi_ij}
		\nonumber\{\Theta_n^h(\mathbf{l})\}_{ij}\le \sum_{\abs{v}=n}\bigg(\V\bigg[\{\xi(v)\}_i:~\norm{\xi(v)}\le h\bigg] \V\bigg[\{\xi(v)\}_j&:~\norm{\xi(v)}\le h\bigg]\bigg)^\frac12
		\\
		&\le \widetilde{C}\sup_n W_n(\mathbf{l}) < \infty,
	\end{align}
	where we used the fact, that $\Cov[X,Y]\leq \sqrt{\V(X) \V(Y)}$ is true for any $X$ and $Y$. Combining \eqref{K:proof:sum_Vxi_i} and \eqref{K:proof:sum_Vxi_ij} we infer, that
	$\norm{\Theta_n^h(\mathbf{l})}_{\mathbf{F}} \le f(\widetilde{C}\sup_n W_n(\mathbf{l}))$ for a certain continuous function $f$. Since $\sup_n W_n(\mathbf{l})$ is finite under our assumptions, it follows that the sequence $(\Theta_n^h(\mathbf{l}))_n$ is uniformly bounded in $\R^{9\times 9}$. Hence, there exists a subsequence $(\Theta_{n_i}^h(\mathbf{l}))_{i}$ such that
	\begin{equation}\label{K:proof:convergence_thetahi} 
		{\Theta_{n_i}^h(\mathbf{l})}\rightarrow \Theta^h(\mathbf{l}), \quad i\rightarrow\infty,
	\end{equation} 
	where $\Theta^h(\mathbf{l})$ is a bounded (in norm) matrix in $\R^{9\times 9}$. Since $\nu(\mathbf{l})$ is a L\'evy measure, a matrix defined as
	\begin{equation}\label{K:proof:def_thetal}
		\Theta(\mathbf{l}):=\Theta^h(\mathbf{l}) + \int_{\norm{x}\le h}x\transpose{x}\nu(\mathbf{l})(\d x)
	\end{equation}
	is finite. Since the matrix ${\Theta_{n_i}^h(\mathbf{l})}$ is symmetric as the sum of symmetric matrices for any $i\in\N$,  the limit $\Theta^h(\mathbf{l})$ and consequently $\Theta(\mathbf{l})$ are both symmetric matrices as well. By similar arguments, $\Theta(\mathbf{l})$ is also positive semi-definite. It follows, that $\Theta(\mathbf{l})$ is a covariance matrix of some vector in $\R^9$. As in step 1 and 2, we assume that $\Theta(\mathbf{l})$ is a (deterministic) covariance matrix, associated with $Y(\mathbf{l})$.
	
	\textbf{Step 4}: It remains to prove, that there exists a subsequence of $(Y_n(\mathbf{l}))_n$, which converges in distribution to $Y(\mathbf{l})$. Combining results \eqref{K:proof:convergence_nuhk}, \eqref{K:proof:convergence_muhj} and \eqref{K:proof:convergence_thetahi}, by a diagonal argument we infer, that there exists a sequence of indices $(n_m)_m\subset \N$, such that for any $h\in(0,1)$ with $\nu(\mathbf{l})(\{\norm{x}=h\})=0$ the relations
	\begin{align*}
		&\sum_{\abs{v}=n_m}\P\circ \bigg(l(v)\vecc(\widetilde{X}(v)\transpose{u(v)})\bigg)^{-1}\overset{v}{\rightarrow}\nu(\mathbf{l})\quad\text{on $\R^9\setminus\{0\}$};
		\\
		&\sum_{\abs{v}=n_m}\E \bigg[l(v)\vecc(\widetilde{X}(v)\transpose{u(v)}):~\bignorm{l(v)\vecc(\widetilde{X}(v)\transpose{u(v)})}\le h\bigg] \rightarrow \mu^h(\mathbf{l});
		\\
		&\sum_{\abs{v}=n_m}\Cov \bigg[l(v)\vecc(\widetilde{X}(v)\transpose{u(v)}):~\bignorm{l(v)\vecc(\widetilde{X}(v)\transpose{u(v)})}\le h\bigg] \rightarrow \Theta^h(\mathbf{l})
	\end{align*}
	hold simultaneosly, as $m\rightarrow\infty$. By the converse implication in \cite[Theorem 13.28]{Kallenberg:1997} we infer, that 
	\begin{equation*}
		Y_{n_m}(\mathbf{l})=\sum_{\abs{v}=n_m} l(v)\vecc(\widetilde{X}(v)\transpose{u(v)})
	\end{equation*}
	converges in distribution to a limiting infinitely divisible $\R^9$-vector, as $m\rightarrow\infty$, which in view of \eqref{K:proof:def_mul} and \eqref{K:proof:def_thetal} coincides with $Y(\mathbf{l})$ and has the L\'evy-triplet $(\mu(\mathbf{l}),\Theta(\mathbf{l}),\nu(\mathbf{l}))$ corresponding to it. Thus, the convergence in \eqref{K:prop:Ynk_to_Y} is proved. Applying formula \eqref{K:geometry_Mn_equal_to_zetaYn}, for any $r\in\R_\ge$ and $o\in\OrthogonalGroup$ we have
	\begin{equation*}
		M_{n_m}(r,o,\mathbf{l})=\zeta_{Y_{n_m}(\mathbf{l})}(r\vecc(o)).
	\end{equation*}
	Since by \eqref{K:prop:Ynk_to_Y} we particularly have the convergence of characteristic functions,
	by sending $m\rightarrow\infty$ in the above formula we obtain
	\begin{equation*}
		M_\infty(r,o,\mathbf{l})=\zeta_{Y(\mathbf{l})}(r\vecc(o)).
	\end{equation*}
	The proof is complete.
\end{proof}

By comparing the represenations of $M_\infty(r, o, \bf l)$ derived in Lemma  \ref{K:prop:construction_of_Yl} and Proposition \ref{K:proposition_solution_via_multiplicative_martingale_M}, respectively, we obtain an interpretation of $\nu^o$, $\mu^o$ and $\Theta^o$ as marginals of the corresponding objects in the L\'evy-Khintchine exponent of $\zeta_{Y(\bf l)}$.

\begin{prop}\label{K:prop:geometry_of_Psio}
	Assume that \eqref{K:assumption_m}, \eqref{K:assumption_derivative_m} and \eqref{K:assumption_regular_variation} hold with $\alpha\in(0,1)$. Fix ${\bf l }\in N^c$ and let $Y(\bf l)$ be an infinitely divisible random vector in $\R^9$ with the corresponding L\'evy-triplet $(\mu({\bf l}),\Theta({\bf l}),\nu({\bf l}))$ given by Lemma \ref{K:prop:construction_of_Yl}, and denote by $\Psi({\bf l})$ the characteristic exponent of $Y({\bf l})$. For any fixed $o\in\OrthogonalGroup$ let $\Psi^o$ and $(\mu^o,\Theta^o,\nu^o)$ be the characteristic exponent and the L\'evy-triplet respectively, given by Proposition \ref{K:proposition_solution_via_multiplicative_martingale_M}. Then, the following identities hold almost surely:
	\begin{flalign}
		\nonumber \textit{(i)}\quad &\mu^o({\bf l}) = \scalar{\vecc(o),\mu({\bf l})} - \int_{} \scalar{\vecc(o),y} 	\bigg(\1_{[0,1]}(\norm{y}) - \1_{[0,1]}(\abs{\scalar{\vecc(o),y}}) \bigg) \nu({\bf l})(\d y);
		&\\
		\nonumber \textit{(ii)}\quad &\Theta^o({\bf l}) = \transpose{\vecc(o)}\Theta({\bf l}) \vecc(o);
		&\\
		\textit{(iii)}\quad &\int f(y){\nu}^o({\bf l})(\d y)=\int_{} f(\scalar{\vecc(o),y})\nu({\bf l})(\d y)~\text{for 	all $f\in C_c(\R\setminus \{0\})$}.
	\end{flalign}
\end{prop}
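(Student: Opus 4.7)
The idea is to equate the two explicit descriptions of $\log M_\infty(r,o,\mathbf{l})$ produced by Proposition \ref{K:proposition_solution_via_multiplicative_martingale_M} (one-dimensional L\'evy-Khintchine form) and by \eqref{K:prop:Minfty_equal_zetaY} in Lemma \ref{K:prop:construction_of_Yl} (nine-dimensional, via the vector $Y(\mathbf{l})$), and then read off (i)--(iii) from the uniqueness of the one-dimensional L\'evy-Khintchine representation. Fix $\mathbf{l}\in N^c$. On the one hand, $\log M_\infty(r,o,\mathbf{l}) = \Psi^o(r)$, written as in \eqref{K:proof:Psior} in terms of the triplet $(\mu^o,\Theta^o,\nu^o)$. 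On the other hand, substituting $\xi = r\vecc(o)$ into the standard nine-dimensional L\'evy-Khintchine exponent of $Y(\mathbf{l})$ gives
\begin{align*}
\Psi(\mathbf{l})(r\vecc(o)) &= ir\scalar{\vecc(o),\mu(\mathbf{l})} - \tfrac12 r^2\transpose{\vecc(o)}\Theta(\mathbf{l})\vecc(o) \\
&\quad + \int_{\R^9}\bigl(e^{ir\scalar{\vecc(o),y}} - 1 - ir\scalar{\vecc(o),y}\1_{[0,1]}(\norm{y})\bigr)\nu(\mathbf{l})(\d y).
\end{align*}

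Next I would introduce the pushforward $\tilde\nu^o(\mathbf{l})$ of $\nu(\mathbf{l})$ under the linear map $y \mapsto z_y := \scalar{\vecc(o),y}$, restricted to $\R\setminus\{0\}$. Since $o\in\OrthogonalGroup$ satisfies $\norm{\vecc(o)}^2 = \tr(\transpose{o}o)=3$, the Cauchy--Schwarz bound $|z_y|\le \sqrt{3}\norm{y}$ transfers the L\'evy integrability, so $\int(z^2\wedge 1)\tilde\nu^o(\mathbf{l})(\d z)<\infty$ and $\tilde\nu^o(\mathbf{l})$ is a genuine L\'evy measure on $\R$. After adding and subtracting $irz_y\1_{[0,1]}(|z_y|)$ inside the integrand, the nine-dimensional integral above decomposes as
\begin{align*}
\int_{\R}\bigl(e^{irz}-1-irz\1_{[0,1]}(|z|)\bigr)\tilde\nu^o(\mathbf{l})(\d z) + ir\int_{\R^9} z_y\bigl(\1_{[0,1]}(|z_y|)-\1_{[0,1]}(\norm{y})\bigr)\nu(\mathbf{l})(\d y).
\end{align*}
A short argument shows that the correction integral is absolutely convergent: its integrand vanishes outside the symmetric difference $\{\norm{y}\le 1\}\triangle \{|z_y|\le 1\}$, where $|z_y|$ is uniformly bounded by $\sqrt{3}$ and where $\nu(\mathbf{l})$ assigns finite mass (bounded above by $\nu(\mathbf{l})(\{\norm{y}>1\})$ on the first piece and by $\nu(\mathbf{l})(\{\norm{y}\ge 1/\sqrt{3}\})$ on the second).

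Inserting this decomposition rewrites $\Psi(\mathbf{l})(r\vecc(o))$ in canonical one-dimensional L\'evy-Khintchine form with triplet
$$\Bigl(\,\scalar{\vecc(o),\mu(\mathbf{l})} - \int \scalar{\vecc(o),y}\bigl(\1_{[0,1]}(\norm{y}) - \1_{[0,1]}(|\scalar{\vecc(o),y}|)\bigr)\nu(\mathbf{l})(\d y),\ \transpose{\vecc(o)}\Theta(\mathbf{l})\vecc(o),\ \tilde\nu^o(\mathbf{l})\,\Bigr).$$
Comparing with the triplet $(\mu^o,\Theta^o,\nu^o)$ of $\Psi^o$ and invoking uniqueness of the one-dimensional L\'evy-Khintchine representation, the three entries match term by term. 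This is exactly (i) and (ii), while (iii) follows from $\nu^o(\mathbf{l}) = \tilde\nu^o(\mathbf{l})$ by the definition of the pushforward: $\int f(y)\nu^o(\mathbf{l})(\d y) = \int f(z)\tilde\nu^o(\mathbf{l})(\d z) = \int f(\scalar{\vecc(o),y})\nu(\mathbf{l})(\d y)$ for every $f\in C_c(\R\setminus\{0\})$. The main obstacle is precisely the truncation bookkeeping just performed: once one verifies that the compensating drift correction is a finite vector, the identification reduces to reading off coefficients from a unique canonical form.
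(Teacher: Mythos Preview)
Your proof is correct and follows exactly the approach the paper has in mind: the paper's own proof merely says that the result follows by ``standard calculations when comparing the L\'evy-Khintchine exponent $\Psi^o(\mathbf{l})$ with that of $\scalar{\vecc(o),Y(\mathbf{l})}$'' and cites \cite[1.3.3 (b)]{Hazod2001}. You have simply spelled out that standard calculation---pushing forward the nine-dimensional L\'evy measure under $y\mapsto\scalar{\vecc(o),y}$, recentring the truncation from $\1_{[0,1]}(\norm{y})$ to $\1_{[0,1]}(|\scalar{\vecc(o),y}|)$, and invoking uniqueness of the one-dimensional L\'evy triplet---in full detail.
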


\begin{proof}
	This follows by standard calculations when comparing the L\'evy-Khintchine exponent $\Psi^o(\bf l)$ with that of $\scalar{ \vec{o},Y(\bf l)}$ c.f. e.g \cite[1.3.3 (b)]{Hazod2001}.
\end{proof}

\subsection*{Acknowledgement}
The authors are very grateful to Alexander Marynych for various fruitful discussions on the subject.


\begin{thebibliography}{10}
	
	\bibitem{Alsmeyer+Biggins+Meiners:2012}
	G.~Alsmeyer, J.~D.~Biggins, and M.~Meiners.
	\newblock The functional equation of the smoothing transform.
	\newblock {\em Ann. Probab.}, 40(5):2069--2105, 2012.
	
	\bibitem{Alsmeyer+Meiners:2013(9)}
	G.~Alsmeyer and M.~Meiners.
	\newblock Fixed points of the smoothing transform: two-sided solutions.
	\newblock {\em Probab. Theory Related Fields}, 155(1):165--199, 2013.
	
	\bibitem{Bassetti2012}
	F.~Bassetti and L.~Ladelli.
	\newblock Self-similar solutions in one-dimensional kinetic models: a
	probabilistic view.
	\newblock {\em Ann. Appl. Probab.}, 22(5):1928--1961, 2012.
	
	\bibitem{Bassetti+Ladelli+Matthes:2010}
	F.~Bassetti, L.~Ladelli, and D.~Matthes.
	\newblock Central limit theorem for a class of one-dimensional kinetic
	equations.
	\newblock {\em Probab. Theory Related Fields}, 150(1):77--109, 2011.
	
	\bibitem{Bassetti+Ladelli+Matthes:2015}
	F.~Bassetti, L.~Ladelli, and D.~Matthes.
	\newblock Infinite energy solutions to inelastic homogeneous boltzmann
	equations.
	\newblock {\em Electron. J. Probab.}, 20, no. 89, 1--34, 2015.
	
	\bibitem{Biggins:1977}
	J.~D.~Biggins.
	\newblock Martingale convergence in the branching random walk.
	\newblock {\em J. Appl. Probab.}, 14(1):25--37, 1977.
	
	\bibitem{Bogus+Buraczewski+Marynych:2020}
	K.~Bogus, D.~Buraczewski, and A.~Marynych.
	\newblock Self-similar solutions of kinetic-type equations: The boundary case.
	\newblock {\em Stochastic Process. Appl.}, 130(2):677--693,
	2020.
	
	\bibitem{Buraczewski+Dyszewski+Marynych:2023}
	D.~Buraczewski, P.~Dyszewski, and A.~Marynych.
	\newblock Solutions of kinetic-type equations with perturbed collisions.
	\newblock {\em Stochastic Process. Appl.}, 159:199--224,
	2023.
	
	\bibitem{Buraczewski+Kolesko+Meiners:2021}
	D.~Buraczewski, K.~Kolesko, and M.~Meiners.
	\newblock Self-similar solutions to kinetic-type evolution equations: beyond
	the boundary case.
	\newblock {\em Electron. J. Probab.}, 26, no. 2, 1--18, 2021.
	
	\bibitem{Caliebe2003}
	A.~Caliebe.
	\newblock Symmetric fixed points of a smoothing transformation.
	\newblock {\em Adv. in Appl. Probab.}, 35(2):377--394, 2003.
	
	\bibitem{Carlen2000}
	E.~A. Carlen, M.~C. Carvalho, and E.~Gabetta.
	\newblock Central limit theorem for {M}axwellian molecules and truncation of
	the {W}ild expansion.
	\newblock {\em Comm. Pure Appl. Math.}, 53(3):370--397, 2000.
	
	\bibitem{Cercignani+Reinhard+Pulvirenti:2012}
	M.~Pulvirenti, C.~Cercignani and  R.~Illner.
	\newblock {\em The Mathematical Theory of Dilute Gases}.
	\newblock Applied Mathematical Sciences. Springer New York, NY, 1994.
	
	\bibitem{Dolera2014}
	E.~Dolera and E.~Regazzini.
	\newblock Proof of a {M}c{K}ean conjecture on the rate of convergence of
	{B}oltzmann-equation solutions.
	\newblock {\em Probab. Theory Related Fields}, 160(1-2):315--389, 2014.
	
	\bibitem{Dwass1957}
	M.~Dwass and H.~Teicher.
	\newblock On infinitely divisible random vectors.
	\newblock {\em Ann. Math. Statist.}, 28:461--470, 1957.
	
	\bibitem{Gabetta2008}
	E.~Gabetta and E.~Regazzini.
	\newblock Central limit theorem for the solutions of the {K}ac equation.
	\newblock {\em Ann. Appl. Probab.}, 18(6):2320--2336, 2008.
	
	\bibitem{Gnedenko1954}
	B.~V. Gnedenko and A.~N. Kolmogorov.
	\newblock {\em Limit distributions for sums of independent random variables}.
	\newblock Addison-Wesley Publishing Co., Inc., Cambridge, MA, 1954.
	\newblock Translated and annotated by K. L. Chung. With an Appendix by J. L.
	Doob.
	
	\bibitem{Guivarch1973}
	Y.~Guivarc'h.
	\newblock Extension d'un th\'{e}or\`eme de {C}hoquet-{D}eny \`a une classe de
	groupes non ab\'{e}liens.
	\newblock In {\em S\'{e}minaire {KGB} sur les {M}arches {A}l\'{e}atoires
		({R}ennes, 1971--1972)}, volume~4 of {\em Ast\'{e}risque}, pages 41--59. Soc.
	Math. France, Paris, 1973.
	
	\bibitem{Hazod2001}
	W.~Hazod and E.~Siebert.
	\newblock {\em Stable probability measures on {E}uclidean spaces and on locally
		compact groups}, volume 531 of {\em Mathematics and its Applications}.
	\newblock Kluwer Academic Publishers, Dordrecht, 2001.
	\newblock Structural properties and limit theorems.
	
	\bibitem{Jessen2006}
	A.~H.~Jessen and T.~Mikosch.
	\newblock Regularly varying functions.
	\newblock {\em Institut Math\'{e}matique. Publications. Nouvelle S\'{e}rie},
	80(94):171--192, 2006.
	
	\bibitem{Kac1956}
	M.~Kac.
	\newblock Foundations of kinetic theory.
	\newblock In {\em Proceedings of the {T}hird {B}erkeley {S}ymposium on
		{M}athematical {S}tatistics and {P}robability, 1954--1955, vol. {III}}, pages
	171--197, Berkeley and Los Angeles, 1956. University of California Press.
	
	\bibitem{Kallenberg:1997}
	O.~Kallenberg.
	\newblock {\em Foundations of modern probability}.
	\newblock Probability and its Applications (New York). Springer-Verlag, New
	York, 1997.
	
	\bibitem{Liu1998}
	Q.~Liu.
	\newblock Fixed points of a generalized smoothing transformation and
	applications to the branching random walk.
	\newblock {\em Adv. in Appl. Probab.}, 30(1):85--112, 1998.
	
	\bibitem{Meiners+Mentemeier:2017}
	M.~Meiners and S.~Mentemeier.
	\newblock Solutions to complex smoothing equations.
	\newblock {\em Probab. Theory Related Fields}, 168(1):199--268, 2017.
	
	\bibitem{Pulvirenti2004}
	A.~Pulvirenti and G.~Toscani.
	\newblock Probabilistic treatment of some dissipative kinetic models.
	\newblock In {\em ``{WASCOM} 2003''---12th {C}onference on {W}aves and
		{S}tability in {C}ontinuous {M}edia}, pages 407--420. World Sci. Publ., River
	Edge, NJ, 2004.
	
	\bibitem{Shi:2012}
	Z.~Shi.
	\newblock {\em Branching Random Walks}.
	\newblock Lecture Notes in Mathematics 2151. Springer International Publishing
	Switzerland, 2012.
	
	\bibitem{Villani:2002}
	C.~Villani.
	\newblock A review of mathematical topics in collisional kinetic theory.
	\newblock volume~1 of {\em Handbook of Mathematical Fluid Dynamics}, pages
	71--74. North-Holland, 2002.
	
\end{thebibliography}
\end{document}